\theoremstyle{plain}
\newtheorem{thm}{Theorem}[section]
\newtheorem{lem}[thm]{Lemma}
\newtheorem{prop}[thm]{Proposition}
\newtheorem{defn}[thm]{Definition}
\numberwithin{equation}{section}
\def\g{v}
\def\ff{u}
\def\kk{\Phi}
\def\bb{\beta}
\def\B{\beta_*}
\def\xx{{x_1}}
\def\yy{{x_2}}
\def\zz{{x_3}}
\def\mm{\boldsymbol\mu}  
\def\tf{\varsigma}
\begin{document}
	\begin{center}
		\Large{\textbf{The continuous collision-induced nonlinear fragmentation equation with non-integrable fragment daughter distributions }}
	\end{center}

	\medskip
	\medskip
	\centerline{${\text{Ankik Kumar Giri$^{\dagger*}$}}$, ${\text{Ram Gopal~ Jaiswal$^{\dagger}$}}$ and ${\text{Philippe Lauren\c{c}ot$^{\ddagger}$}}$}\let\thefootnote\relax\footnotetext{$^{*}$Corresponding author. Tel +91-1332-284818 (O);  Fax: +91-1332-273560  \newline{\it{${}$ \hspace{.3cm} Email address: }}ankik.giri@ma.iitr.ac.in}
	\medskip
	{\footnotesize
		
		\centerline{ ${}^{\dagger}\text{Department of Mathematics, Indian Institute of Technology Roorkee}$,}
		
		\centerline{Roorkee-247667, Uttarakhand, India}
		
		\centerline{ ${}^{\ddagger}\text{Laboratoire de Math\'ematiques (LAMA), UMR~$5127$}$,}
		\centerline{Universit\'e Savoie Mont Blanc, CNRS,}
		\centerline{ F--73000 Chamb\'ery, France}
	}

	\bigskip

	\begin{quote}
		{\small {\em \bf Abstract.}  Existence, non-existence, and uniqueness of mass-conserving weak solutions  to the continuous collision-induced nonlinear fragmentation equations are established for the  collision  kernels $\Phi$ satisfying $\Phi(x_1,x_2)={x_1}^{\lambda_1} {x_2}^{\lambda_2}+{x_2}^{\lambda_1} {x_1}^{\lambda_2}$, $(x_1,x_2)\in(0,\infty)^2$, with  ${\lambda_1} \leq {\lambda_2}\leq 1$, and non-integrable fragment daughter distributions. In particular, global existence of mass-conserving weak solutions is shown when $1\le\lambda:={\lambda_1}+{\lambda_2}\le2$ with $\lambda_1\ge k_0$,  the parameter $k_0\in(0,1)$ being related to the non-integrability of the fragment daughter distribution. The existence of at least one mass-conserving weak solution is also demonstrated when $2k_0 \le \lambda < 1$ with $\lambda_1\ge k_0$ but its maximal existence time is shown to be finite. Uniqueness is also established in both cases. The last result deals with the non-existence of mass-conserving weak solutions, even on a small time interval, for power law fragment daughter distribution when $\lambda_1<k_0$. It is worth mentioning that the previous literature on the nonlinear fragmentation equation does not treat non-integrable fragment daughter distribution functions.}
	\end{quote}
	\vspace{0.5cm}

	\textbf{Keywords.} Collision-induced fragmentation, non-integrable fragment daughter distributions, existence, uniqueness, non-existence, mass conservation\\
	
	\textbf{AMS subject classifications.} 35R09,  45K05, 35A01, 35A02, 35D30

	\section{Introduction}\label{sec:intro}
	
	The nonlinear fragmentation equation, sometimes known as collision-induced breakage equation, describes the dynamics of particles  which change their sizes in response to collisions with other particles and is given by the following nonlinear integro-differential equation
	\begin{equation}
		\partial_{t} \ff (t,\xx)=\mathcal{F} \ff (t,\xx)-\mathcal{L} \ff (t,\xx), ~~(t,\xx)\in (0,\infty)^2, \label{eq:main}
	\end{equation}
	\begin{equation}
		\ff (0,\xx)=\ff^{\mbox{\rm{\mbox{in}}}}(\xx)\geq0, ~~ \xx\in (0,\infty),  \label{eq:in}
	\end{equation}
	
	where the formation and loss term of particles with size $\xx$ are given by
	\begin{equation}
		\mathcal{F} \ff (\xx):=\frac{1}{2}\int_{\xx}^{\infty}\int_{0}^{\yy}\bb (\xx,\yy-\zz,\zz)\kk(\yy-\zz,\zz)\ff (\yy-\zz)\ff (\zz)d\zz d\yy \label{eq:formation}
	\end{equation}
	and
	\begin{equation}
		\mathcal{L} \ff (\xx):=\int_{0}^{\infty}\kk(\xx,\yy)\ff (\xx)\ff (\yy)d\yy, \label{eq:loss}
	\end{equation}
	for $\xx\in (0,\infty)$.
	
	In equations~\eqref{eq:main}-\eqref{eq:loss}, the quantity $\ff(t,\xx)$ represents the density of particles with size $\xx>0$ at time $t\geq 0$. The collision kernel $\kk(\xx,\yy)=\kk(\yy,\xx)\ge0$ describes the rate at which particles of sizes $\xx$ and $\yy$ collide, while the daughter distribution function $\bb(\zz,\xx,\yy)=\bb(\zz,\yy,\xx)\ge0$ gives the number of particles of size $\zz\in(0,\xx+\yy)$ that result from the collision between particles of sizes $\xx$ and $\yy$. In equation \eqref{eq:main}, the first term on the right-hand side accounts for the formation of particles with size $\xx$ due to the collision between particles of respective sizes $\yy-\zz$ and $\zz$, where $\yy>\xx$ and $\zz\in(0,\yy)$. The second term on the right-hand side represents the loss of particles of size $\xx$ due to their collision with particles of arbitrary size.
	
	Let us assume that local mass remains conserved, i.e.,
	\begin{equation}
		\int_{0}^{\xx+\yy}\zz\bb (\zz,\xx,\yy)d\zz=\xx+\yy ~~\text{and}~~ \bb (\zz,\xx,\yy)=0 ~\text{for}~ \zz>\xx+\yy, \label{eq:localc}
	\end{equation}
	for all $(\xx,\yy)\in (0,\infty)^2$. This is a fundamental property in the modeling of collisional breakup of particles. It ensures that no matter is created nor destroyed during the collision of particles. The property~\eqref{eq:localc} also leads to the global mass conservation principle, which is given by 
	\begin{equation}
		\int_{0}^{\infty}\xx\ff (t,\xx)d\xx=\int_{0}^{\infty}\xx\ff^{\mbox{\rm{\mbox{in}}}}(\xx)d\xx, ~t\geq 0. \label{eq:globalc}
	\end{equation}
	The identity~\eqref{eq:globalc} states that the total mass of particles at any time $t$ is equal to the initial total mass of particles.
	
	Furthermore, it can be inferred from equation~\eqref{eq:localc} that the combination of particles with sizes $\xx$ and $\yy$ through collision cannot produce fragments that are larger than the sum of their sizes $\xx + \yy$. However, it is reasonable to expect that collisional fragmentation between $\xx$ and $\yy$ may generate particles larger than $\xx$ and $\yy$ due to mass transfer from the smallest to the largest particles. However, if there exists a function $\B$ which is non-negative  and satisfies
	\begin{equation}
		\bb (\zz,\xx,\yy)=\B (\zz,\xx,\yy)\textbf{1}_{(0,\xx)}(\zz)+\B (\zz,\yy,\xx)\textbf{1}_{(0,\yy)}(\zz), \label{eq:masstransfer}
	\end{equation} 
	for $(\xx,\yy,\zz)\in (0,\infty)^3$ and 
	\begin{equation}
		\int_{0}^{\xx}\zz\B (\zz,\xx,\yy)d\zz=\xx, \label{eq:Blocalc}
	\end{equation}
	then there will be no mass transfer during the collision.
	
	The integrability of the non-negative function $\B$ is an important assumption, as it ensures that the number of daughter particles produced during the breakage of a particle of size $\xx$ is finite, which can be mathematically expressed as
	\begin{equation}
		\mathrm{N}(\xx,\yy):=\int_0^{\xx} \B (\zz,\xx,\yy)d\zz <\infty, ~ ~ (\xx,\yy)\in (0,\infty)^2. \label{eq:number_of_particles}
	\end{equation}
	 A specific type of breakage function, known as power law breakage, is given by
	\begin{equation}
		\B (\zz,\xx,\yy)=(\nu+2){\zz}^\nu {\xx}^{-\nu-1}\textbf{1}_{(0,\xx)}(\zz), \label{eq:powerlaw}
	\end{equation}
	 with $\nu\in (-2,0]$ \cite{ziff1987}, and satisfies the integrability assumption \eqref{eq:number_of_particles} when $\nu \in (-1,0]$, but fails to do so when $\nu \in (-2,-1]$.
	
	 To the best of our knowledge, the integrability assumption~\eqref{eq:number_of_particles} is a standing assumption in the studies of the nonlinear fragmentation equation~\eqref{eq:main} performed so far \cite{cheng1990, ernst2007, GL22021} and the main purpose of this work is to relax this assumption, thereby allowing to handle more singular fragment daughter distributions such as the one given by~\eqref{eq:powerlaw} with $\nu\in (-2,-1]$. We shall thus rather assume below that there are $k_0\in (0,1)$ and $p_{0}\in (1,1+k_0)$ such that $\B$ satisfies
	\begin{equation}
		\int_{0}^{\xx}{\zz}^{k_0}\B (\zz,\xx,\yy)^{p}d\zz\leq E_{k_0,p}{\xx}^{k_0+1-p}, \qquad p\in [1,p_0], \label{eq:nonintegrable}
	\end{equation}
	 for some positive constant $E_{k_0,p}$. It is worth pointing out that the assumption~\eqref{eq:nonintegrable} is a rather natural generalization of the integrability  {assumption}~\eqref{eq:number_of_particles} which somehow corresponds to the choice $k_0=0$ in~\eqref{eq:nonintegrable}. Furthermore, the power law distribution~\eqref{eq:powerlaw} satisfies~\eqref{eq:nonintegrable} for any $k_0 > |\nu| - 1$ and $p < \frac{k_0+1}{|\nu|}$.
	
	 As for the collision kernel~$\kk$, we assume for simplicity that it is given by
	\begin{equation}
		\kk(\xx,\yy)={\xx}^{\lambda_1} {\yy}^{\lambda_2}+{\yy}^{\lambda_1} {\xx}^{\lambda_2}, ~  k_0\leq {\lambda_1} \leq {\lambda_2}\leq 1,~ (\xx,\yy)\in(0,\infty)^2, \label{eq:kernel}
	\end{equation}
	for $k_0\in(0,1)$ stated in~\eqref{eq:nonintegrable} with homogeneity $\lambda={\lambda_1}+{\lambda_2}$.
	
	\medskip
	
	The nonlinear breakage/fragmentation equation has not been studied as completely as its linear counterpart. Over the past few decades, there has been a lot of interest in the linear fragmentation equation \cite{redner1990}, which was first investigated by Filippov \cite{Fil1961}, Kapur \cite{kapur1972}, McGrady and Ziff \cite{ziff1987, ziff1991}  and later studied by functional analytic methods in \cite{banasiak2002, banasiak2004, banasiak2006, bll2019, bt2018, eme2005} and by stochastic approaches in \cite{bert2002, haas2003}, see also the books \cite{bll2019, bert2006} for a more detailed account. 
	
	 In contrast, there are only a few studies available in the physics literature on the collision-induced breakage equation. An asymptotic analysis of the continuous collision-induced breakage events is performed by Cheng and Redner in \cite{cheng1988, cheng1990} for a class of models in which a collision of two particles causes both particles to split into two equal halves or either the largest particle, or the smallest one splits in two. In a later study, Krapivsky and Ben-Naim \cite{krapivsky2003} investigate the dynamics of collision-induced fragmentation, using its travelling wave behaviour to calculate the fragment mass distribution analytically. Kostoglou and Karabelas \cite{kostoglou2000, kostoglou2006} also study analytical solutions to the  collision-induced fragmentation and their asymptotic information using the Gamma distribution approximation. Ernst and Pagonabarraga \cite{ernst2007} describe the asymptotic behaviour of the collision-induced breakage equation for the product kernel $\kk(\xx,\yy) = (\xx \yy)^{\lambda/2}$ and $0\le \lambda \le 2$. In this case, the collision-induced breakage equation can be mapped to a linear breakage equation.
	
	Recently, in \cite{GL22021}, existence and uniqueness of mass-conserving weak solutions to~\eqref{eq:main}-\eqref{eq:in} are discussed for integrable daughter distribution functions and collision kernels of the form $\kk(\xx,\yy) = {\xx}^{\lambda_1} {\yy}^{\lambda_2} +{\xx}^{\lambda_2} {\yy}^{\lambda_1}$, where $\lambda := {\lambda_1}+{\lambda_2} \in [0,2]$. Moreover, the non-existence of weak solutions is also shown in \cite{GL22021} when $\lambda_1<0$.

	The goal of this article is to establish results on the existence, uniqueness and non-existence of mass-conserving weak solutions to \eqref{eq:main}-\eqref{eq:in} for a class of non-integrable daughter distribution functions. In particular, the existence of global weak solutions is shown for collision kernels that grow at least linearly while only local weak solutions are achieved for the case when collision kernels grow at most linearly. As in \cite{GL22021}, the existence proof relies on the weak $L^1$-compactness method introduced in \cite{stewart1989} for the coagulation-fragmentation equations and later developed in several papers, see \cite{bll2019} for a detailed account and further references. As for uniqueness, we shall follow the approach developed in \cite{eme2005, stewart1990} by showing a Lipschitz property in a suitably chosen weighted $L^1$-space. We finally adapt an argument from \cite{bll2019, CadC92, GL22021, vanD87c} to show the non-existence of mass-conserving weak solutions for some collision kernels.
	
	Before stating our results, we introduce the following notation:
	given a function $W$ which is non-negative and  measurable and defined on the interval  $(0, \infty),$ we set $X_W:= L^1((0, \infty), W(\xx)d\xx)$
	and
	\begin{equation*}
		 \lvert \lvert g \rvert \rvert_W: = \int_{0}^{\infty } \lvert g(\xx) \rvert W(\xx)d\xx, \quad \mm_{W}(g):=\int_{0}^{\infty }  g(\xx)  W(\xx)d\xx , ~g \in X_{W}.
	\end{equation*}
	
	The space $X_W$  endowed with its weak topology is denoted by $X_{W,w}$, while its positive cone is denoted by $X_{W,+}$. In particular, we denote  $X_k := X_{W_k}$  when $W(\xx) = W_k(\xx) := {\xx}^k, ~ \xx \in  (0, \infty),$ for some $k \in \mathbb{R}$ and
	\begin{equation*}
		\lvert \lvert g \rvert \rvert_{k} : = \int_{0}^{\infty } \xx^k \lvert g(\xx) \rvert d\xx, \quad	\mm_{k}(g):=\mm_{W_k}(g)=\int_{0}^{\infty }  {\xx}^k  g(\xx)d\xx, ~g \in X_{k}.
	\end{equation*}

\section{Main results}\label{sec:mr}
	
In order to state the main results of the paper, we first need to make clear what we mean by a weak solution to~\eqref{eq:main}-\eqref{eq:in}. The notion of weak solution used here differs slightly from that used for non-integrable fragment distributions in the classical coagulation-fragmentation equation \cite{bll2019, phl2018}. It requires the  integrability of function $({\xx}^{k_0}+{\yy}^{k_0})\kk(\xx,\yy)\ff (\tau,\xx)\ff (\tau,\yy)$ rather than integrability of function $\min\{\xx, \yy\}^{k_0}\kk(\xx,\yy)$ $\ff (\tau,\xx)\ff (\tau,\yy)$ for all $(\tau,\xx,\yy)\in (0,t)\times(0,\infty)^2$, where $t\in (0,T)$ and $T\in(0,\infty]$.

\subsection{Weak solution} 

\begin{defn}\label{defn:weaksolution}
		Let us fix $k_0\in(0,1)$, a non-negative and symmetric collision kernel $\Phi$ and a fragment daughter distribution $\beta$ satisfying~\eqref{eq:localc}, as well as
			\begin{equation*}
				\int_0^{\xx+\yy} \zz^{k_0} \beta(\zz,\xx,\yy) d\zz \le B \left( \xx^{k_0} + \yy^{k_0} \right), \qquad (\xx,\yy)\in (0,\infty)^2,
			\end{equation*}
		for some $B>0$, and consider an initial condition $\ff^{\mbox{\rm{\mbox{in}}}}\in X_{k_0,+}\cap X_{1}$ and $T\in(0,\infty].$ A weak solution to the nonlinear fragmentation equation \eqref{eq:main}-\eqref{eq:in} on $[0,T)$ is a non-negative function $\ff$ such that
		\begin{equation}
			\ff\in \mathcal{C}([0,T),X_{k_0,w})\cap L^{\infty}((0,T),X_{1}), \label{eq:ws1}
		\end{equation}
		with $\ff (0)=\ff^{\mbox{\rm{\mbox{in}}}}$ in $(0,\infty),$
		\begin{equation}
			(\tau,\xx,\yy) \longmapsto ({\xx}^{k_0}+{\yy}^{k_0})\kk(\xx,\yy)\ff (\tau,\xx)\ff (\tau,\yy)\in L^{1}((0,t)\times(0,\infty)^2),\label{eq:ws2}
		\end{equation}
		which also satisfies
		\begin{align}
			\int_{0}^{\infty}\tf(\xx)\ff (t,\xx)d\xx &=	\int_{0}^{\infty}\tf(\xx) \ff^{\mbox{\rm{\mbox{in}}}}(\xx)d\xx \nonumber \\
			&+\frac{1}{2}\int_{0}^{t}\int_{0}^{\infty}\int_{0}^{\infty}\Upsilon_{\tf}(\xx,\yy)\kk(\xx,\yy)\ff (\tau,\xx)\ff (\tau,\yy)d\yy d\xx d\tau,  \label{eq:wf}
		\end{align}
		for all $t\in (0,T)$ and $\tf \in \mathscr{T}^{k_0},$ where
		\begin{equation*}
			\mathscr{T}^{k_0}:=\{\tf\in \mathcal{C}^{0,k_0}([0,\infty))\cap L^{\infty}{((0,\infty)}): \tf(0)=0 \},
		\end{equation*}
		and 
		\begin{equation}
			\Upsilon_{\tf}(\xx,\yy)  = \Upsilon_{\tf}(\yy,\xx) :=\int_{0}^{\xx+\yy}\tf(\zz)\bb (\zz,\xx,\yy)d\zz-\tf(\xx)-\tf(\yy).  \label{eq:zeta}
		\end{equation}
		In addition, $u$ is said to be mass-conserving on $[0,T)$ if it satisfies
		\begin{equation*}
			\mm_1(\ff(t)) = \mm_1(\ff^{\mbox{\rm{\mbox{in}}}}), \qquad t\in [0,T).
		\end{equation*}
	\end{defn}
	
	First, we establish that there exists at least one weak solution to \eqref{eq:main}-\eqref{eq:in} on some time interval $[0,T_*)$ in the sense of \Cref{defn:weaksolution}, which is also mass-conserving. The result established here provides an extension of \cite{GL22021} to the class of daughter distributions which are non-integrable.

	\begin{thm}\label{maintheorem}
		
		Suppose that $\kk$ is defined according to equation~\eqref{eq:kernel} and that $\bb$ fulfills conditions \eqref{eq:masstransfer}, \eqref{eq:Blocalc}, and \eqref{eq:nonintegrable} for some fixed $k_0\in (0,1)$.
		Consider an initial condition $\ff^{\mbox{\rm{\mbox{in}}}} \in  X_{k_0,+} \cap X_{1}$ with a positive mass $\rho := \mm_1(\ff^{\mbox{\rm{\mbox{in}}}})$ such that
		\begin{equation}
			\int_0^\infty {\xx}^{k_0+1} \ff^{\mbox{\rm{\mbox{in}}}}(\xx) d\xx < \infty. \label{eq:additionalinitiadata}
		\end{equation}
		Then there exists at least one mass-conserving weak solution  $\ff$ to~\eqref{eq:main}-\eqref{eq:in} on $[0, T_*)$ in the sense of \Cref{defn:weaksolution}, where
		\begin{equation*}T_*=\begin{cases}
			T_{k_0} \qquad \text{if}~ \lambda \in (0,1),\\
			\infty  \qquad \text{if}~ \lambda \in [1,2],
		\end{cases}\end{equation*}
		 and $T_{k_0}$ is defined in \Cref{thm:smallsizebehavious}, see~\eqref{eq:Tk0}, and only depends on $\ff^{\mbox{\rm{\mbox{in}}}}$, $\rho$, and  $\lambda=\lambda_1+\lambda_2$.
	\end{thm}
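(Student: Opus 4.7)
The plan is to build mass-conserving weak solutions as weak $L^1$-limits of solutions to a truncated family of approximating equations, following the Stewart-type weak compactness method of \cite{stewart1989} as refined for non-integrable daughter distributions in the spirit of \cite{GL22021}. The essential novelty compared with the integrable setting is that the natural functional framework is the weighted space $X_{k_0}$, and the classical bound on the number of daughters is replaced by the inequality $\int_0^{\xx+\yy}\zz^{k_0}\bb(\zz,\xx,\yy)\,d\zz\le B(\xx^{k_0}+\yy^{k_0})$ from \Cref{defn:weaksolution}, which one first checks is an immediate consequence of~\eqref{eq:masstransfer}, \eqref{eq:Blocalc} and~\eqref{eq:nonintegrable} with $p=1$.

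First, for each integer $n\ge 1$ I would introduce a truncated kernel $\Phi_n=\Phi\mathbf{1}_{\{\xx+\yy\le n\}}$ and truncated initial data $\ff_n^{\mathrm{in}}=\ff^{\mbox{\rm{\mbox{in}}}}\mathbf{1}_{(0,n)}$, and solve the corresponding Cauchy problem on $L^1((0,n))$. Since $\Phi_n$ is bounded and the formation term is controlled by~\eqref{eq:nonintegrable}, standard Picard theory produces a unique non-negative global solution $\ff_n$, which conserves mass thanks to~\eqref{eq:localc}. The heart of the matter then lies in uniform moment estimates, which I expect to be the content of \Cref{thm:smallsizebehavious}. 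Testing against $\tf=W_{k_0}$ and using the $k_0$-moment bound on $\bb$ together with~\eqref{eq:kernel} produces a differential inequality of the form
\begin{equation*}
\frac{d}{dt}\mm_{k_0}(\ff_n)(t)\le C\,\mm_{k_0}(\ff_n)(t)^{\alpha}\,\mm_1(\ff^{\mbox{\rm{\mbox{in}}}})^{\beta},
\end{equation*}
with exponents $\alpha,\beta\ge 0$ depending on $\lambda_1,\lambda_2,k_0$; intermediate moments arising from $\xx^{\lambda_1}\yy^{\lambda_2}$ are handled by interpolation between $\mm_{k_0}$ and $\mm_1$, using $\lambda_1\ge k_0$ and $\lambda_2\le 1$. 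The homogeneity $\lambda\ge 1$ forces $\alpha\le 1$ and yields a global bound, whereas for $\lambda\in[2k_0,1)$ a Bernoulli-type analysis yields only a local bound up to the critical time $T_{k_0}$ of~\eqref{eq:Tk0}. An analogous computation with $\tf=W_{k_0+1}$, relying on the extra integrability~\eqref{eq:additionalinitiadata}, provides uniform control of $\mm_{k_0+1}(\ff_n)$ on $[0,T_*)$, which is indispensable both for equi-integrability at infinity and for the eventual mass conservation.

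The remainder of the argument follows the weak compactness scheme. The bounds on $\mm_{k_0}$ and $\mm_{k_0+1}$ combined with the de~la~Vall\'ee-Poussin theorem yield equi-integrability of $(\ff_n)$ in $X_{k_0}$ at both ends of $(0,\infty)$, hence sequential relative compactness of $(\ff_n(t))$ in $X_{k_0,w}$ for each $t\in[0,T_*)$. Time equicontinuity in $X_{k_0,w}$ is obtained from the weak formulation together with the pointwise estimate $|\Upsilon_\tf(\xx,\yy)|\le C(\xx^{k_0}+\yy^{k_0})$ valid for $\tf\in\mathscr{T}^{k_0}$, and a refined Arzel\`a-Ascoli argument as in~\cite{bll2019} extracts a subsequence $\ff_{n_j}\to\ff$ in $\mathcal{C}([0,T_*),X_{k_0,w})$.

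The hardest step, in my view, is passing to the limit in the quadratic collision term, whose integrand combines algebraic growth at infinity through $\Phi$ with a $k_0$-H\"older-type singularity at the origin through $\bb$. It is handled by splitting the domain of integration into $(1/R,R)^2$, on which weak-weak convergence of the product is classical, and its complement, on which the tails are dominated uniformly in $n$ through the $(k_0+1)$-moment bound at infinity and the $k_0$-moment bound at the origin. Mass conservation is finally recovered by testing against $\xx\chi_R(\xx)$ for a smooth cut-off $\chi_R$, passing to the limit $n\to\infty$ in the resulting identity, and then sending $R\to\infty$ using the uniform $(k_0+1)$-moment control to dispose of the remainder term.
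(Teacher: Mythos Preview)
Your overall architecture---truncate, get uniform moment estimates, apply Dunford--Pettis and Arzel\`a--Ascoli, pass to the limit---matches the paper's. However, two points in your outline would actually fail as stated.

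First, the truncated Cauchy problem cannot be solved in $L^1((0,n))$: the very non-integrability of $\bb$ you are trying to accommodate means that $\int_0^{\xx+\yy}\bb(\zz,\xx,\yy)\,d\zz=\infty$, so the formation operator $\mathcal{F}_n$ does not map $L^1$ to $L^1$. The paper works in $X_{k_0}\cap X_1$, where~\eqref{eq:nonintegrable} with $p=1$ provides the requisite Lipschitz estimates; this is not a cosmetic change but the whole point of the $X_{k_0}$ framework.

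Second, and more substantively, your moment-estimate order is inverted. Testing with $W_{k_0}$ produces
\[
\frac{d}{dt}\mm_{k_0}(\ff_n)\le E_{k_0,1}\big[\mm_{k_0+\lambda_1}(\ff_n)\mm_{\lambda_2}(\ff_n)+\mm_{k_0+\lambda_2}(\ff_n)\mm_{\lambda_1}(\ff_n)\big],
\]
and when $k_0+\lambda_2>1$ (which is allowed, since $\lambda_2$ may equal $1$) the moment $\mm_{k_0+\lambda_2}$ lies above $1$ and \emph{cannot} be interpolated between $\mm_{k_0}$ and $\mm_1$ as you propose. The paper resolves this by establishing the $\mm_{k_0+1}$ bound \emph{first}: testing with any superlinear power yields $\Upsilon_{W_{k_0+1}}\le 0$ thanks to~\eqref{eq:Blocalc}, so $\mm_{k_0+1}(\ff_n(t))\le\mm_{k_0+1}(\ff^{\mbox{\rm{\mbox{in}}}})$ directly (this is \Cref{lem:mtail}), and only then interpolates $\mm_{k_0+\lambda_i}$ between $\mm_{k_0}$, $\mm_1$, and $\mm_{k_0+1}$ to close the differential inequality for $\mm_{k_0}$ and define $T_{k_0}$.

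A lesser point: the equi-integrability step is more delicate than ``moment bounds plus de~la~Vall\'ee~Poussin''. The paper uses \emph{two} distinct de~la~Vall\'ee~Poussin constructions---one weight $\psi_0(\xx)$ with $\psi_0(\xx)/\xx^{k_0}\to\infty$ as $\xx\to 0$ for small-size tightness (\Cref{ssbrevisted}), and one convex superlinear $\psi$ acting on the \emph{values} of $\ff_n$ to prevent concentration (\Cref{lem:uniform_int}). The latter exploits the full assumption~\eqref{eq:nonintegrable} with $p>1$; the case $p=1$ alone would not suffice here.
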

	
	Theorem~\ref{maintheorem} is proved using a weak compactness approach in the space  $X_{k_0}\cap X_{1+k_0}$, a method pioneered in \cite{stewart1989} for the coagulation-fragmentation equation, and adapted in particular to~\eqref{eq:main} in \cite{GL22021} for integrable fragment daughter distributions. For $\ff^{\mbox{\rm{\mbox{in}}}} \in X_{0,+} \cap X_1$, it is known from \cite{GL22021} that both global and local mass-conserving weak solutions to \eqref{eq:main}-\eqref{eq:in} exist under the assumptions~\eqref{eq:masstransfer}, \eqref{eq:Blocalc}, \eqref{eq:nonintegrable}, and \eqref{eq:kernel} with  $k_0=0$. 

		In addition to the existence of mass-conserving weak solutions, a result on uniqueness is  demonstrated for a more restricted set of initial data $\ff^{\mbox{\rm{\mbox{in}}}}$.
		\begin{thm}\label{thm:uniqueness}
			Suppose that $\kk$ is defined according to equation~\eqref{eq:kernel} and that $\bb$ fulfills conditions~\eqref{eq:masstransfer}, \eqref{eq:Blocalc}, and~\eqref{eq:nonintegrable} for some fixed $k_0\in (0,1)$. Consider an initial condition $\ff^{\mbox{\rm{\mbox{in}}}} \in X_{k_0,+} \cap X_{k_0+1}$ and $T \in (0,\infty]$. Then there exists a unique weak solution $\ff$ to the equation~\eqref{eq:main}-\eqref{eq:in} on the time interval $[0,T)$ satisfying the condition  
			\begin{equation}
				\mm_{1+k_0+{\lambda_2}}(\ff) \in L^1(0,t) ~~\text{for every}~~ t \in (0,T). \label{eq:moment1+k0+lambda2}
			\end{equation}
		\end{thm}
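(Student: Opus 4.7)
The plan is to follow the Stewart--type \emph{Lipschitz property} approach of \cite{eme2005,stewart1990} flagged in the introduction: on a weighted $L^{1}$-space $X_W$ with a carefully chosen weight $W$, the right-hand side of \eqref{eq:main} is Lipschitz in $\ff$, and Gronwall's lemma then forces the difference of two solutions sharing the same datum to vanish. Let $\ff_1,\ff_2$ be two weak solutions on $[0,T)$ with $\ff_1(0)=\ff_2(0)=\ff^{\mathrm{in}}$ both satisfying~\eqref{eq:moment1+k0+lambda2}, set $V := \ff_1+\ff_2$, $w := \ff_1-\ff_2$, and decompose the quadratic nonlinearity as $\ff_1(\xx)\ff_1(\yy) - \ff_2(\xx)\ff_2(\yy) = \tfrac{1}{2}[V(\yy)w(\xx)+V(\xx)w(\yy)]$, which is the key algebraic identity linearizing the equation in $w$.

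The natural weight is $W(\xx) = \xx^{k_0} + \xx^{k_0+1}$. On the one hand,~\eqref{eq:nonintegrable} with $p=1$ gives $\int_0^{\xx+\yy}\zz^{k_0}\bb\,d\zz \leq E_{k_0,1}(\xx^{k_0}+\yy^{k_0})$; on the other,~\eqref{eq:Blocalc} combined with the pointwise bound $\zz^{k_0+1}\leq(\xx+\yy)^{k_0}\zz$ and the convexity inequality $(\xx+\yy)^{k_0+1}\leq 2^{k_0}(\xx^{k_0+1}+\yy^{k_0+1})$ yields $\int_0^{\xx+\yy}\zz^{k_0+1}\bb\,d\zz \leq 2^{k_0}(\xx^{k_0+1}+\yy^{k_0+1})$. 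Together, $\int W(\zz)\bb(\zz,\xx,\yy)\,d\zz \leq C_{0}(W(\xx)+W(\yy))$, hence $|\Upsilon_{\sgn(w)W}(\xx,\yy)| \leq C(W(\xx)+W(\yy))$. Testing the weak formulations of $\ff_1,\ff_2$ against a regularized truncation of $\tf(t,\xx) := \sgn(w(t,\xx))W(\xx)$ (smoothing $\sgn$ and cutting $W$ so that the admissible test function lies in $\mathscr{T}^{k_0}$), subtracting and symmetrizing, one obtains in the limit
\begin{equation*}
\frac{d}{dt}\lVert w(t)\rVert_W = \iint \Upsilon_{\sgn(w(t))W}(\xx,\yy)\,\kk(\xx,\yy)\,V(t,\yy)\,w(t,\xx)\,d\yy\,d\xx.
\end{equation*}

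Expanding $\kk$ and $W(\xx)+W(\yy)$ splits the right-hand side into a finite combination of products $\mm_{a}(|w|)\,\mm_{b}(V)$ with $a,b\in[k_0,\,k_0+1+\lambda_2]$. The $V$-moments at orders in $[k_0,\,1+k_0+\lambda_2]$ are controlled in time, either in $L^{\infty}$ (from $\ff_i\in L^{\infty}(X_{k_0}\cap X_1)$) or in $L^{1}$ (via \eqref{eq:moment1+k0+lambda2} and interpolation). The $|w|$-moments of order in $[k_0,k_0+1]$ are bounded by $\lVert w(t)\rVert_W$, while $\mm_1(|w|)\leq 2\rho$ by mass conservation. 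Combining these estimates yields a differential inequality $\frac{d}{dt}\lVert w(t)\rVert_W \leq K(t)\lVert w(t)\rVert_W$ with $K\in L^{1}_{\mathrm{loc}}(0,T)$. Since $w(0)=0$, Gronwall's lemma yields $w\equiv 0$ and hence $\ff_1=\ff_2$.

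The main obstacle is keeping the Gronwall inequality \emph{linear} in $\lVert w\rVert_W$: the $W(\xx)\xx^{\lambda_i}$-type contributions naively produce moments $\mm_{k_0+1+\lambda_i}(|w|)$ of order strictly above $W$, which would only give a sub-linear Osgood-type bound insufficient to propagate $w(0)=0$. The remedy is twofold. First, the worst such term is cancelled by the genuinely dissipative contribution $-\iint W(\xx)|w|(\xx)\kk V(\yy)\,d\yy\,d\xx$ arising from the $-\tf(\xx)$ piece of $\Upsilon_{\tf}$. Second, the residual higher-order $|w|$-moment is absorbed by interpolating $\mm_{k_0+1+\lambda_i}(|w|)$ between $\mm_{k_0+1}(|w|)\leq\lVert w\rVert_W$ and the $L^{1}$-in-time bound on $\mm_{1+k_0+\lambda_2}(\ff_i)$ supplied by~\eqref{eq:moment1+k0+lambda2}; this is precisely where the extra moment hypothesis enters decisively. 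The accompanying approximation steps (smoothing $\sgn$, truncating the unbounded weight, and passing to the limit by dominated convergence) are routine provided each estimate above is established uniformly in the regularization parameters.
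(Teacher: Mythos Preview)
Your overall plan matches the paper's exactly: estimate $\|w\|_W$ with $W(\xx)\sim\max\{\xx^{k_0},\xx^{k_0+1}\}$, exploit the sign structure to isolate a dissipative $-W(\cdot)$ on the difference variable, and close by Gronwall using the moment bound~\eqref{eq:moment1+k0+lambda2} on $V=\ff_1+\ff_2$. However, the execution contains a genuine gap at precisely the point you flag as the main obstacle.

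The cancellation you invoke in remedy~(1) must be \emph{exact} on the $\xx^{k_0+1}$ component, and your crude bound $\int_0^{\xx+\yy}\zz^{k_0+1}\bb\,d\zz\le 2^{k_0}(\xx^{k_0+1}+\yy^{k_0+1})$ destroys this. With your constant $2^{k_0}>1$, the dissipative term $-W(\xx)|w(\xx)|$ leaves a residual $(2^{k_0}-1)\xx^{k_0+1}$ on the $|w|$ side, which after multiplication by $\kk$ produces $\mm_{k_0+1+\lambda_i}(|w|)$ with $k_0+1+\lambda_i>k_0+1$. Your remedy~(2) cannot repair this: moment interpolation requires all moments to be of the \emph{same} function, so you cannot interpolate $\mm_{k_0+1+\lambda_i}(|w|)$ against $\mm_{1+k_0+\lambda_2}(\ff_i)$; and replacing $|w|\le V$ in the high moment yields an inequality of the form $\tfrac{d}{dt}\|w\|_W\le K_1(t)\|w\|_W + K_2(t)$ (or a sublinear Osgood bound), neither of which forces $w\equiv 0$ from $w(0)=0$.

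The fix is to sharpen your $\zz^{k_0+1}$ estimate via the no-mass-transfer split~\eqref{eq:masstransfer}: since $\B(\cdot,\xx,\yy)$ is supported in $(0,\xx)$, one has $\int_0^{\xx}\zz^{k_0+1}\B(\zz,\xx,\yy)\,d\zz\le \xx^{k_0}\int_0^{\xx}\zz\B(\zz,\xx,\yy)\,d\zz=\xx^{k_0+1}$ by~\eqref{eq:Blocalc}, and likewise for $\yy$; hence $\int_0^{\xx+\yy}\zz^{k_0+1}\bb\,d\zz\le \xx^{k_0+1}+\yy^{k_0+1}$ with constant exactly~$1$. With this, the $\xx^{k_0+1}$ contribution on the $|w|$ side is annihilated by $-W(\xx)$, leaving only $(E_{k_0,1}-1)\xx^{k_0}$ (harmless, since $k_0+\lambda_i\le k_0+1$) and $W(\yy)$-type terms (which place the high moment $\mm_{k_0+1+\lambda_2}$ on $V$, controlled by~\eqref{eq:moment1+k0+lambda2}). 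This is exactly what the paper does, carrying it out via a four-region case analysis in $(\xx,\yy)$ with the weight $\max\{\xx^{k_0},\xx^{k_0+1}\}$; remedy~(2) is then unnecessary.
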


		In order to prove uniqueness, similar to \cite{bll2019, eme2005, giri2013, GL22021, stewart1990}, it is necessary to control the distance between two solutions in a suitable weighted $L^1$-space. The challenging aspect of this proof is to select an appropriate weight function, which in this case is given by $w(\xx):= \max\{{\xx}^{k_0},{\xx}^{1+k_0}\}$ for $\xx>0$.
		
		\bigskip
		
		 We finally identify a range of the parameters $(\lambda_1,\lambda_2)$, along with a class of power law fragment daughter distributions~\eqref{eq:powerlaw}, for which no non-zero mass-conserving weak solutions to equation~\eqref{eq:main}-\eqref{eq:in} on $[0,T)$ exists whatever the value of $T>0$.
		
		\begin{thm} \label{thm:nonexistence} Suppose that $\kk$ is defined according to equation~\eqref{eq:kernel} and that $\bb$ fulfills conditions~\eqref{eq:masstransfer}, \eqref{eq:Blocalc}, and~\eqref{eq:powerlaw} for some $\nu\in (-2,-1]$. Fix $k_0\in (|\nu|-1,1)$ and consider an initial condition $\ff^{\mbox{\rm{\mbox{in}}}} \in  X_{k_0,+} \cap  X_{1+k_0}$ with $\rho  := \mm_1(\ff^{\mbox{\rm{\mbox{in}}}}) > 0$ and $T>0$. If ${\lambda_1}<|\nu|-1$ and $\lambda=\lambda_1+\lambda_2<1$, then \eqref{eq:main}-\eqref{eq:in} does not admit a mass-conserving weak solution on $[0,T)$.
		\end{thm}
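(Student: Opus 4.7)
The argument is by contradiction: assume a mass-conserving weak solution $u$ of~\eqref{eq:main}-\eqref{eq:in} exists on $[0,T)$ for some $T>0$. The cornerstone of my strategy is the explicit moment calculation available for the power-law daughter distribution~\eqref{eq:powerlaw}: a direct integration gives, for any $k>|\nu|-1$,
\begin{equation*}
\int_0^{x_1} z^k \beta_*(z,x_1,x_2)\, dz = \frac{\nu+2}{k+\nu+1}\, x_1^k,
\end{equation*}
so that, for the test function $\varsigma(x)=x^k$, the quantity introduced in~\eqref{eq:zeta} simplifies to $\Upsilon_\varsigma(x_1,x_2) = \frac{1-k}{k+\nu+1}\bigl(x_1^k+x_2^k\bigr)$.

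Since $k_0>|\nu|-1$ by hypothesis, I would fix $k\in[k_0,1)$ so that the coefficient $(1-k)/(k+\nu+1)$ is strictly positive, and approximate the unbounded function $x\mapsto x^k$ by bounded H\"older test functions $\varsigma_n\in\mathscr{T}^{k_0}$, for instance $\varsigma_n(x):=\min\{x^k,n\}$. Plugging $\varsigma_n$ into~\eqref{eq:wf} and letting $n\to\infty$ via monotone convergence---legitimate because the resulting integrand is non-negative---would produce, for all $t\in[0,T)$,
\begin{equation*}
\mathcal{M}_k(u(t)) = \mathcal{M}_k(u^{\mathrm{in}}) + \frac{1-k}{k+\nu+1}\int_0^t \bigl[\mathcal{M}_{k+\lambda_1}(u)\mathcal{M}_{\lambda_2}(u)+\mathcal{M}_{k+\lambda_2}(u)\mathcal{M}_{\lambda_1}(u)\bigr]\,d\tau.
\end{equation*}

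The contradiction would then be extracted by combining the Cauchy--Schwarz inequality $\mathcal{M}_{k+\lambda_1}(u)\mathcal{M}_{\lambda_2}(u)\ge\mathcal{M}_{(k+\lambda)/2}(u)^2$ with a H\"older-type interpolation exploiting mass conservation $\mathcal{M}_1(u)=\rho>0$ and the upper bound $\mathcal{M}_{1+k_0}(u(t))\le \mathcal{M}_{1+k_0}(u^{\mathrm{in}})$. The latter would come from the analogous moment identity applied to $k=1+k_0$, where the coefficient $(1-k)/(k+\nu+1)$ becomes \emph{negative}, making the corresponding moment non-increasing in time. Under the hypothesis $\lambda<1$, such an interpolation should deliver a super-linear lower bound $\mathcal{M}_{(k+\lambda)/2}(u)^2\ge C\,\mathcal{M}_k(u)^{\gamma}$ with $\gamma>1$, yielding a differential inequality of the form $d\mathcal{M}_k(u)/dt\ge C'\mathcal{M}_k(u)^\gamma$. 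This forces $\mathcal{M}_k(u(t))$ to blow up in finite time, contradicting the local boundedness provided by $u\in\mathcal{C}([0,T),X_{k_0,w})\cap L^\infty((0,T),X_1)$ via interpolation.

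The main obstacle is the final step: converting the moment identity into a genuinely \emph{super-linear} inequality. A direct use of Cauchy--Schwarz together with mass conservation typically yields only a uniform positive lower bound on $\mathcal{M}_{(k+\lambda)/2}(u)$---that is, linear-in-time growth of $\mathcal{M}_k(u(t))$, which is insufficient for finite-time blow-up. The hypotheses $\lambda_1<|\nu|-1$ and $\lambda<1$ must be invoked through a careful splitting of the integration domain into small and large sizes: the first hypothesis controls the singular small-size behaviour induced by the non-integrable fragment distribution, while $\lambda<1$ ensures the kernel is sub-linear and the relevant moment products scale as a super-linear power of $\mathcal{M}_k(u)$. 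A secondary technical difficulty is that the moment identity at $k=1+k_0$ features a negative coefficient, requiring a dominated-convergence approximation scheme rather than the monotone one used above.
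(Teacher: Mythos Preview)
Your proposal captures the right starting point---the explicit computation $\Upsilon_{x^k}(x_1,x_2)=\frac{1-k}{k+\nu+1}(x_1^k+x_2^k)$ and the resulting super-linear differential inequality for $\mm_k(\ff)$---but it has a genuine gap that prevents it from proving the theorem as stated.

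The theorem asserts non-existence on $[0,T)$ for \emph{every} $T>0$. Your blow-up argument with a fixed $k\in[k_0,1)$ yields only a finite upper bound $T_{\mathrm{blow}}(k)$ for the existence time, and there is no reason this bound should be smaller than an arbitrary given $T$. What the paper does---and what your outline misses---is to let $k$ decrease to $|\nu|-1$. The factor $1/(k+\nu+1)$ in the moment identity then diverges, and a careful tracking of constants shows that $T_{\mathrm{blow}}(k)\to 0$ as $k\to|\nu|-1$. This is precisely where the hypothesis $\lambda_1<|\nu|-1$ is used: it guarantees that $\lambda_1<k$ throughout this limit, so that the H\"older interpolation $\mm_k\le\rho^{(k-\lambda_1)/(1-\lambda_1)}\mm_{\lambda_1}^{(1-k)/(1-\lambda_1)}$ (giving a lower bound on $\mm_{\lambda_1}$) remains valid, and the exponent in the differential inequality stays strictly super-linear with a limiting constant that is still positive. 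Your proposal acknowledges uncertainty about the role of $\lambda_1<|\nu|-1$; this is exactly its role.

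Carrying out the limit $k\to|\nu|-1$ forces $k<k_0$, which creates two technical obstacles you have not addressed. First, $x\mapsto x^k$ is no longer $k_0$-H\"older at the origin, so $\min\{x^k,n\}\notin\mathscr{T}^{k_0}$; the paper instead uses the regularised truncations $(x+\theta)^k-\theta^k$, which are Lipschitz. Second, the integrand on the right of the weak formulation now involves moments such as $\mm_{\lambda_1}(\ff)$ with $\lambda_1<k_0$, whose finiteness is not part of the definition of weak solution; the paper establishes this beforehand (its Lemma~6.1) by showing that \emph{all} sublinear and negative moments of a mass-conserving weak solution are locally bounded in time. Without this step, neither monotone nor dominated convergence can be justified in the passage to the limit.
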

		
		 A similar result is established in \cite[Theorem~1.8]{GL22021} for integrable fragment daughter distributions~\eqref{eq:powerlaw} with $\nu\in (-1,0]$ when $\lambda_1<0$, extending an observation from \cite{ernst2007} when $\lambda_1=\lambda_2<0$. The novelty here is that the non-existence result is not restricted to collision kernels involving at least a negative exponent. In particular, \Cref{thm:nonexistence} applies to the constant collision kernel (corresponding to $\lambda_1=\lambda_2=0$) as soon as $\nu\in (-2,-1)$. As in \cite{GL22021}, the proof of \Cref{thm:nonexistence} is adapted from \cite{bll2019, CadC92, vanD87c}. 
		
		\bigskip
		
		Let us now describe the content of the paper. The following section contains additional properties of weak solutions to \eqref{eq:main}-\eqref{eq:in} based on \Cref{defn:weaksolution}, including a criterion for mass conservation provided by \Cref{prop:massconservation}, along with some tail control in \Cref{lem:mtail}. In \cref{sec:eca}, we provide the proof of \Cref{maintheorem} by a weak $L^{1}$ compactness approach. In order to prove \Cref{maintheorem}, first we prove that \eqref{eq:main} is well-posed when $\bb$ satisfies~\eqref{eq:masstransfer}, \eqref{eq:Blocalc} and~\eqref{eq:nonintegrable} for suitably truncated collision kernels with the help of Banach fixed point theorem. To avoid concentration at a finite size and prevent the escape of matter, a uniform integrability estimate is derived along with estimates for both small and large sizes. The Dunford-Pettis theorem ensures that the approximating sequence is weakly compact with respect to size. In the next step, we estimate time equicontinuity to obtain the compactness with respect to time. \Cref{sec:u} is devoted to the proof of \Cref{thm:uniqueness}. We end the paper with the proof of the non-existence result in \cref{sec:ne}.

		\section{Fundamental properties}\label{sec:fp}
		
		First, let us recall that \Cref{defn:weaksolution} makes sense due to the regularity properties of  $\beta$ described in \Cref{defn:weaksolution}.
		
		\begin{lem}\label{lem:m}
			Let ${k_0}\in(0,1)$ and  $\beta$ satisfies~\eqref{eq:masstransfer}, \eqref{eq:Blocalc}, and~\eqref{eq:nonintegrable}. Consider a function $\tf\in C^{0,k_0}([0,\infty))$ satisfying $\tf(0)=0$. Then 
			\begin{equation*}
				\lvert \Upsilon_{\tf}(\xx,\yy) \rvert\leq E_{k_0}({\xx}^{k_0}+{\yy}^{k_0})||\tf||_{C^{0,k_0}}, ~(\xx,\yy)\in(0,\infty)^2,
			\end{equation*}
			where $E_{k_0}:=E_{k_0,1}+1$. 
		\end{lem}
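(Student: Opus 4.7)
The plan is to bound each of the three pieces of $\Upsilon_\tf$ separately and then combine them. The Hölder regularity of $\tf$ together with $\tf(0)=0$ immediately gives the pointwise estimate
\begin{equation*}
|\tf(\zz)| = |\tf(\zz)-\tf(0)| \le \|\tf\|_{C^{0,k_0}}\,\zz^{k_0}, \qquad \zz\ge 0,
\end{equation*}
so the two single-variable terms $\tf(\xx)$ and $\tf(\yy)$ are each controlled by $\|\tf\|_{C^{0,k_0}}\,\xx^{k_0}$ and $\|\tf\|_{C^{0,k_0}}\,\yy^{k_0}$ respectively.

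For the integral piece I would invoke the mass-transfer decomposition~\eqref{eq:masstransfer} to split
\begin{equation*}
\int_0^{\xx+\yy} \tf(\zz)\bb(\zz,\xx,\yy)\,d\zz = \int_0^{\xx}\tf(\zz)\B(\zz,\xx,\yy)\,d\zz + \int_0^{\yy}\tf(\zz)\B(\zz,\yy,\xx)\,d\zz,
\end{equation*}
apply the pointwise bound on $|\tf(\zz)|$, and then use assumption~\eqref{eq:nonintegrable} with the choice $p=1$ (so that the right-hand side reduces to $E_{k_0,1}\xx^{k_0}$, respectively $E_{k_0,1}\yy^{k_0}$). This yields
\begin{equation*}
\left| \int_0^{\xx+\yy} \tf(\zz)\bb(\zz,\xx,\yy)\,d\zz \right| \le E_{k_0,1}\,\|\tf\|_{C^{0,k_0}}\,(\xx^{k_0}+\yy^{k_0}).
\end{equation*}

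Collecting the three bounds by the triangle inequality produces the constant $E_{k_0,1}+1$, matching the stated definition $E_{k_0}:=E_{k_0,1}+1$. There is no genuine obstacle here: the argument is essentially a one-line computation, and the only thing to notice is that invoking~\eqref{eq:nonintegrable} at the endpoint $p=1$ (which is the linear case of the hypothesis and requires no Jensen-type manipulation) is exactly what delivers the clean $\xx^{k_0}+\yy^{k_0}$ growth. No use of~\eqref{eq:Blocalc} or of the upper endpoint $p_0$ is needed for this lemma.
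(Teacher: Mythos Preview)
Your proof is correct and is precisely the argument the paper has in mind: the paper's own proof is a single sentence stating that the estimate follows directly from the definition of $\Upsilon_\tf$ together with~\eqref{eq:masstransfer}, \eqref{eq:Blocalc}, and~\eqref{eq:nonintegrable}, and you have simply written out those details (your observation that~\eqref{eq:Blocalc} is not actually needed here is accurate).
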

		
		\begin{proof}
			The proof directly follows from the definition of $\Upsilon_{\tf}$, \eqref{eq:masstransfer}, \eqref{eq:Blocalc}, and \eqref{eq:nonintegrable}.
		\end{proof}
		
		\subsection{Conservation of mass} In the following proposition, we demonstrate the conditions under which a weak solution to equation~\eqref{eq:main}-\eqref{eq:in} is mass-conserving.

		\begin{prop} \label{prop:massconservation}
			 Let $T\in (0,\infty]$ and $k_0\in (0,1)$. Suppose that $\ff$ is a weak solution to~\eqref{eq:main}-\eqref{eq:in} on $[0, T)$ with initial condition $\ff^{\mbox{\rm{\mbox{in}}}} \in X_{k_0,+} \cap X_1$  in the sense of \Cref{defn:weaksolution} satisfying additionally
			\begin{equation}
				\int_0^t \int_{0}^{\infty} \int_{0}^{\infty}(\xx+\yy)\kk(\xx,\yy)\ff (\tau,\xx)\ff (\tau,\yy)d\xx d\yy d\tau <\infty \label{eq:mass_conservation_condition}
			\end{equation}
			for all $t \in (0, T)$. Then $\ff$ is a mass-conserving  weak solution to \eqref{eq:main}-\eqref{eq:in} on $[0, T)$.
		\end{prop}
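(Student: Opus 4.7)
The plan is to test the weak formulation \eqref{eq:wf} with a sequence of admissible test functions $\tf_j \in \mathscr{T}^{k_0}$ that approximate the identity $\xx \mapsto \xx$, and then pass to the limit $j \to \infty$ using a dominated convergence argument powered by the hypothesis \eqref{eq:mass_conservation_condition}. The natural choice is $\tf_j(\xx) := \min\{\xx, j\}$, which is Lipschitz, bounded, vanishes at $0$, and therefore lies in $\mathcal{C}^{0,k_0}([0,\infty)) \cap L^\infty((0,\infty))$ with $\tf_j(0)=0$; moreover $\tf_j(\xx) \nearrow \xx$ as $j \to \infty$.

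Given such $\tf_j$, \eqref{eq:wf} yields, for every $t \in (0,T)$,
\begin{equation*}
\int_0^\infty \tf_j(\xx) \ff(t,\xx) d\xx = \int_0^\infty \tf_j(\xx) \ff^{\mbox{\rm{\mbox{in}}}}(\xx) d\xx + \frac{1}{2} \int_0^t \int_0^\infty \int_0^\infty \Upsilon_{\tf_j}(\xx,\yy) \kk(\xx,\yy) \ff(\tau,\xx) \ff(\tau,\yy)  d\yy d\xx d\tau.
\end{equation*}
For the left-hand side and the initial term, since $0 \le \tf_j(\xx) \le \xx$ and $\tf_j(\xx) \to \xx$ pointwise, and since both $\ff(t) \in X_1$ (by \eqref{eq:ws1}) and $\ff^{\mbox{\rm{\mbox{in}}}} \in X_1$, Lebesgue's dominated convergence theorem gives convergence to $\mm_1(\ff(t))$ and $\mm_1(\ff^{\mbox{\rm{\mbox{in}}}})$ respectively.

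The core of the argument lies in showing that the double integral vanishes in the limit. First, from the local mass conservation property \eqref{eq:localc} one has $\Upsilon_{\mathrm{id}}(\xx,\yy) = (\xx+\yy) - \xx - \yy = 0$ identically, so $\Upsilon_{\tf_j}(\xx,\yy) \to 0$ pointwise as $j \to \infty$. To justify passage to the limit I must produce a $j$-independent integrable majorant. Using $0 \le \tf_j(\zz) \le \zz$ together with \eqref{eq:localc} we obtain
\begin{equation*}
\left| \Upsilon_{\tf_j}(\xx,\yy) \right| \le \int_0^{\xx+\yy} \tf_j(\zz) \bb(\zz,\xx,\yy) d\zz + \tf_j(\xx) + \tf_j(\yy) \le 2(\xx+\yy),
\end{equation*}
and the assumption \eqref{eq:mass_conservation_condition} exactly states that $(\xx+\yy) \kk(\xx,\yy) \ff(\tau,\xx) \ff(\tau,\yy)$ is integrable on $(0,t) \times (0,\infty)^2$. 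Dominated convergence then forces the double integral to tend to $0$, yielding $\mm_1(\ff(t)) = \mm_1(\ff^{\mbox{\rm{\mbox{in}}}})$ for every $t \in [0,T)$.

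The only subtlety is confirming that $\tf_j$ is genuinely an admissible test function in $\mathscr{T}^{k_0}$; this is immediate since Lipschitz implies $\mathcal{C}^{0,k_0}$ on $[0,\infty)$ with a controlled seminorm. Apart from that, the argument is a clean dominated convergence passage to the limit and presents no real obstacle once \eqref{eq:mass_conservation_condition} is assumed.
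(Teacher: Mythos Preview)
Your proposal is correct and follows essentially the same approach as the paper, which defers to \cite[Proposition~2.2]{GL22021}: truncate the identity by $\tf_j(\xx)=\min\{\xx,j\}\in\mathscr{T}^{k_0}$, use~\eqref{eq:localc} to get the pointwise limit $\Upsilon_{\tf_j}\to 0$ and the uniform bound $|\Upsilon_{\tf_j}|\le 2(\xx+\yy)$, and pass to the limit via dominated convergence thanks to~\eqref{eq:mass_conservation_condition}. One small wording fix: Lipschitz alone does \emph{not} imply $\mathcal{C}^{0,k_0}$ on the unbounded domain $[0,\infty)$ (take $\xx\mapsto\xx$); what you actually use is that $\tf_j$ is both Lipschitz with constant~$1$ and bounded by $j$, which gives the H\"older seminorm bound $[\tf_j]_{\mathcal{C}^{0,k_0}}\le j^{1-k_0}$.
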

		
		\begin{proof}
			The proof is the same as that of \cite[Proposition~2.2]{GL22021}, to which we refer.
		\end{proof}

		\begin{lem}
		 Let $T\in (0,\infty]$ and $k_0\in (0,1)$, and consider a weak solution $\ff$ to~\eqref{eq:main}-\eqref{eq:in} on $[0,T)$ with initial condition $\ff^{\mbox{\rm{\mbox{in}}}} \in X_{k_0,+} \cap X_1$ in the sense of \Cref{defn:weaksolution} which also satisfies~\eqref{eq:mass_conservation_condition}. Then the validity of the weak formulation~\eqref{eq:wf} extends to any function 
		\begin{equation*}
		\tf \in \mathcal{C}^{0,k_0}([0,\infty))~\text{ with} ~  \tf(0)=0~\text{and}~ \sup_{\xx>1}\left\{ \frac{|\tf(\xx)|}{\xx} \right\} < \infty.
		\end{equation*}
		\end{lem}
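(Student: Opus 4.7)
The plan is to approximate the unbounded test function $\tf$ by a sequence of bounded functions $(\tf_n)_{n\ge 1}$ lying in $\mathscr{T}^{k_0}$, apply the already-established weak formulation~\eqref{eq:wf} to each $\tf_n$, and pass to the limit $n\to\infty$ via Lebesgue's dominated convergence theorem.

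To construct the truncations, I first record a pointwise bound on $\tf$ itself. Combining H\"older continuity on $[0,1]$ with $\tf(0)=0$ gives $|\tf(\zz)| \le A \zz^{k_0}$ on $[0,1]$, where $A$ is the local H\"older constant, while for $\zz > 1$ the growth assumption yields $|\tf(\zz)| \le C \zz$ with $C := \sup_{\xx > 1}|\tf(\xx)|/\xx$. Consequently $|\tf(\zz)| \le A \zz^{k_0} + C \zz$ for every $\zz\ge 0$. I then set $\tf_n(\zz) := \tf(\zz \wedge n)$ for $n\ge 1$. A standard gluing argument shows $\tf_n \in \mathcal{C}^{0,k_0}([0,\infty))$, and since $\tf_n$ is bounded by $|\tf(n)|$ on $[0,\infty)$, we have $\tf_n \in \mathscr{T}^{k_0}$. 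The bound $|\tf_n(\zz)| \le A \zz^{k_0} + C \zz$ holds uniformly in $n$, and $\tf_n(\zz) \to \tf(\zz)$ pointwise as $n\to\infty$.

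Applying~\eqref{eq:wf} to each $\tf_n$ produces three terms to pass to the limit. For the two linear terms $\int_0^\infty \tf_n(\xx) \ff(t,\xx)\, d\xx$ and $\int_0^\infty \tf_n(\xx) \ff^{\mbox{\rm{\mbox{in}}}}(\xx)\, d\xx$, the uniform pointwise bound together with the memberships $\ff(t),\ff^{\mbox{\rm{\mbox{in}}}} \in X_{k_0}\cap X_1$ deliver an integrable majorant, so Lebesgue's theorem applies directly. For the bilinear term, the crucial step is a uniform bound on $\Upsilon_{\tf_n}$: inserting the pointwise bound on $\tf_n$ into the definition~\eqref{eq:zeta} and invoking \eqref{eq:localc} (which gives $\int_0^{\xx+\yy} \zz\, \bb(\zz,\xx,\yy)\, d\zz = \xx+\yy$) together with the additional assumption built into \Cref{defn:weaksolution} (namely $\int_0^{\xx+\yy} \zz^{k_0}\, \bb(\zz,\xx,\yy)\, d\zz \le B(\xx^{k_0}+\yy^{k_0})$) yields
\[
|\Upsilon_{\tf_n}(\xx,\yy)| \le K_1 (\xx^{k_0}+\yy^{k_0}) + K_2 (\xx+\yy),
\]
with constants $K_1,K_2$ independent of $n$. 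After multiplication by $\kk(\xx,\yy)\ff(\tau,\xx)\ff(\tau,\yy)$ and integration over $(0,t)\times(0,\infty)^2$, the first contribution is controlled by~\eqref{eq:ws2} and the second by~\eqref{eq:mass_conservation_condition}, producing the required integrable majorant. Pointwise convergence $\Upsilon_{\tf_n}(\xx,\yy) \to \Upsilon_{\tf}(\xx,\yy)$ follows from another application of dominated convergence to the $\zz$-integral, again using the bound $(A\zz^{k_0}+C\zz)\bb(\zz,\xx,\yy)$ as majorant.

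The main technical point is precisely this uniform control on $\Upsilon_{\tf_n}$, since it requires matching the two growth regimes of $\tf$ (the H\"older part near the origin, the linear part at infinity) to the two available integrability hypotheses on the solution (the $X_{k_0}$-regularity through~\eqref{eq:ws2} on one hand, the time-integrated moment bound~\eqref{eq:mass_conservation_condition} on the other). Once this matching is secured, a final invocation of Lebesgue's theorem closes the argument and extends the weak formulation~\eqref{eq:wf} to every $\tf$ in the enlarged class.
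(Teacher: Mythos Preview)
Your proposal is correct and follows essentially the same approach as the paper: truncate $\tf$ by freezing its value beyond a threshold (the paper writes $\tf_L(\xx)=\tf(\xx)\textbf{1}_{(0,L)}(\xx)+\tf(L)\textbf{1}_{(L,\infty)}(\xx)$, which is exactly your $\tf(\xx\wedge L)$), apply~\eqref{eq:wf} to the truncation, bound $|\Upsilon_{\tf_L}|$ by a constant times $(\xx^{k_0}+\yy^{k_0}+\xx+\yy)$, and pass to the limit via dominated convergence using~\eqref{eq:ws2} and~\eqref{eq:mass_conservation_condition}. The only cosmetic difference is that the paper invokes the specific structural hypotheses~\eqref{eq:masstransfer}, \eqref{eq:Blocalc}, \eqref{eq:nonintegrable} on $\bb$ to obtain the $\Upsilon$-bound, whereas you use directly the $k_0$-moment assumption stated in \Cref{defn:weaksolution} together with~\eqref{eq:localc}; both routes yield the same inequality.
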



		\begin{proof}
			Consider $\tf \in \mathcal{C}^{0,k_0}([0,\infty))$ such that $\tf(0)=0$ which satisfies $|\tf(\xx)| \le C\xx$ for $\xx > 1$, where $C$ is a positive constant. Observe that 
			\begin{equation}
				|\tf(\xx)| \le C(\xx^{k_0}+\xx)~\text{for}~ \xx > 0. \label{remark_eq1}
			\end{equation}
			
		For $L>0$, let  us define
		 \begin{equation*}
		 	\tf_L (\xx)=\tf(\xx) \textbf{1}_{(0,L)}(\xx)+\tf(L) \textbf{1}_{(L,\infty)}(\xx), \qquad \xx>0,
		 \end{equation*}
		  and note that $\tf_L$ also satisfies~\eqref{remark_eq1}. Also we can easily see that $\tf_L \in \mathscr{T}^{k_0} $, so that, from \eqref{eq:wf}, we can write for $L>0$,
		 
		 	\begin{align}
		 	\int_{0}^{\infty}\tf_L(\xx)\ff (t,\xx)d\xx & = \int_{0}^{\infty}\tf_L(\xx) \ff^{\mbox{\rm{\mbox{in}}}}(\xx)d\xx \nonumber \\
		 	&\hspace{-0.5cm}+\frac{1}{2}\int_{0}^{t}\int_{0}^{\infty}\int_{0}^{\infty}\Upsilon_{\tf_L}(\xx,\yy)\kk(\xx,\yy)\ff (\tau,\xx)\ff (\tau,\yy)d\yy d\xx d\tau. \label{remark_eq3}
		 \end{align}
		 Now, using~\eqref{eq:masstransfer}, \eqref{eq:Blocalc}, \eqref{eq:nonintegrable}, \eqref{eq:zeta}, and \eqref{remark_eq1} (for $\tf_L$), we get 
		 \begin{equation}
		 	|\Upsilon_{\tf_L}(\xx,\yy)|\le  {CE_{k_0}}(\xx^{k_0}+\yy^{k_0}+\xx+\yy), \label{remark_eq4}
		 \end{equation}
		  since~\eqref{eq:Blocalc} and~\eqref{eq:nonintegrable} imply that $E_{k_0,1}\ge 1$. Now, with the help of~\eqref{eq:ws2},  \eqref{eq:mass_conservation_condition}, and~\eqref{remark_eq4}, we may apply the Lebesgue dominated convergence theorem and pass to the limit as $L \to \infty$ in~\eqref{remark_eq3} to obtain		 
		 	\begin{align}
		 	\int_{0}^{\infty}\tf(\xx)\ff (t,\xx)d\xx &=	\int_{0}^{\infty}\tf(\xx) \ff^{\mbox{\rm{\mbox{in}}}}(\xx)d\xx \nonumber \\
		 	&+\frac{1}{2}\int_{0}^{t}\int_{0}^{\infty}\int_{0}^{\infty}\Upsilon_{\tf}(\xx,\yy)\kk(\xx,\yy)\ff (\tau,\xx)\ff (\tau,\yy)d\yy d\xx d\tau  \label{remark_eq5}
		 \end{align}
		  and complete the proof.
		 \end{proof}

			In particular, when $0 < {L_1} < {L_2}$, the function
		\begin{equation*}\tf(\xx)={\xx}^{k} \textbf{1}_{({L_1},L_2)}(\xx)+\xx {L_2}^{k-1} \textbf{1}_{({L_2},\infty)}(\xx)\end{equation*} 
		defined for $\xx\in(0,\infty)$ and $k\ge1$ can be employed as a test function in equation~\eqref{eq:wf}.
		
		\subsection{Control on  linear and superlinear moments for large sizes} Control on  the first moment, which is nothing but control on mass for large sizes, has been noted in \cite{ernst2007} and rigorously proven in  \cite[Lemma~2.3]{GL22021} by using the test function $\tf(\xx)=\xx \textbf{1}_{(0,{L_1})}(\xx)$ for ${L_1}>0$. Similarly, we establish a control on superlinear moments for large sizes.

		\begin{lem} \label{lem:mtail}
			 Let $T\in (0,\infty]$, $k_0\in (0,1)$, and a non-negative and symmetric collision kernel $\bar{\Phi}$ satisfying $\bar{\Phi}\le\Phi$. Consider a mass-conserving weak solution $\ff$ to the equation~\eqref{eq:main}-\eqref{eq:in} on the time interval $[0, T)$ with $\bar{\Phi}$ instead of $\Phi$. Let the initial condition $\ff^{\mbox{\rm{\mbox{in}}}}\in X_{k_0,+} \cap X_1$. Assuming that $\bb$ satisfies~\eqref{eq:masstransfer} and~\eqref{eq:Blocalc}, and that $\ff^{\mbox{\rm{\mbox{in}}}}\in X_{k}$ for some $k\geq 1$, we have
			\begin{align}
				\int_{\xx}^{\infty}{\yy}^{k}\ff (t,\yy)d\yy \leq \int_{\xx}^{\infty}{\yy}^{k}\ff^{\mbox{\rm{\mbox{in}}}}
				(\yy)d\yy,
			\end{align}
			for all $(t, \xx) \in [0, T) \times (0, \infty)$. In particular,
			\begin{align}
			\mm_k (\ff(t))\le \mm_k(\ff^{\mbox{\rm{\mbox{in}}}}),  	\qquad t \in [0, T).
			\end{align}
		\end{lem}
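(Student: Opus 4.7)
The plan is to extend the mass tail estimate of \cite[Lemma~2.3]{GL22021} from $k=1$ to general $k\ge 1$ by exploiting the family of test functions highlighted in the remark preceding the statement. For fixed $\xx>0$ and $L_2>\xx$, I will set
\[
	\tf_{L_2}(\zz) := \zz^{k} \textbf{1}_{(\xx,L_2)}(\zz) + \zz L_2^{k-1} \textbf{1}_{[L_2,\infty)}(\zz), \qquad \zz>0,
\]
which is admissible in the extended weak formulation~\eqref{remark_eq5} (with a smooth regularization across the jump at $\xx$, if necessary). Plugging $\tf_{L_2}$ into~\eqref{eq:wf} written with $\bar{\Phi}$ in place of $\Phi$, the idea is to show that the double integral involving $\Upsilon_{\tf_{L_2}}$ is nonpositive, and then to pass to the limit $L_2\to\infty$.

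The heart of the argument is the pointwise bound $\Upsilon_{\tf_{L_2}}(\xx',\yy')\le 0$ for every $(\xx',\yy')\in(0,\infty)^2$. Using the decomposition~\eqref{eq:masstransfer}, it suffices by symmetry to prove
\[
	\int_0^{\xx'} \tf_{L_2}(\zz)\, \B(\zz,\xx',\yy')\, d\zz \le \tf_{L_2}(\xx').
\]
I will distinguish three cases according to the position of $\xx'$. When $\xx'\le\xx$, both sides vanish since $\tf_{L_2}\equiv 0$ on $(0,\xx]$. When $\xx<\xx'\le L_2$, the elementary inequality $\zz^{k-1}\le(\xx')^{k-1}$ on $(\xx,\xx')$ (valid because $k\ge 1$) combined with the local mass conservation~\eqref{eq:Blocalc} yields
\[
	\int_{\xx}^{\xx'} \zz^{k}\, \B(\zz,\xx',\yy')\, d\zz \le (\xx')^{k-1}\int_{\xx}^{\xx'} \zz\, \B(\zz,\xx',\yy')\, d\zz \le (\xx')^{k} = \tf_{L_2}(\xx').
\]
When $\xx'>L_2$, the bounds $\zz^{k-1}\le L_2^{k-1}$ on $(\xx,L_2)$ and $\tf_{L_2}(\zz)=\zz L_2^{k-1}$ on $(L_2,\xx')$ reduce both pieces to $L_2^{k-1}\zz \B$ and, again by~\eqref{eq:Blocalc},
\[
	\int_{\xx}^{\xx'} \tf_{L_2}(\zz)\, \B(\zz,\xx',\yy')\, d\zz \le L_2^{k-1}\int_0^{\xx'} \zz\, \B(\zz,\xx',\yy')\, d\zz = L_2^{k-1}\xx' = \tf_{L_2}(\xx').
\]
Applying this symmetrically to the $\yy'$ branch of~\eqref{eq:masstransfer} and subtracting $\tf_{L_2}(\xx')+\tf_{L_2}(\yy')$ gives $\Upsilon_{\tf_{L_2}}\le 0$.

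With this in hand, the weak formulation~\eqref{remark_eq5} (with $\bar\Phi$ in place of $\Phi$) immediately delivers
\[
	\int_0^\infty \tf_{L_2}(\zz)\, \ff(t,\zz)\, d\zz \le \int_0^\infty \tf_{L_2}(\zz)\, \ff^{\mathrm{in}}(\zz)\, d\zz, \qquad t\in[0,T).
\]
A direct check shows that the family $\{\tf_{L_2}\}_{L_2>\xx}$ is pointwise non-decreasing in $L_2$ (using $k\ge 1$ when comparing values on $(L_2,L_2')$ and on $(L_2',\infty)$) and converges pointwise to $\zz^{k}\textbf{1}_{(\xx,\infty)}(\zz)$ as $L_2\to\infty$. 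Monotone convergence on both sides yields the claimed tail estimate. The final moment bound follows by letting $\xx\to 0^+$ and invoking monotone convergence once more, using $\ff^{\mathrm{in}}\in X_k$ to keep the right-hand side finite.

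The main obstacle is the pointwise verification $\Upsilon_{\tf_{L_2}}\le 0$, which relies on the careful case split above and on the assumption $k\ge 1$; without this, the inequality $\zz^{k-1}\le(\xx')^{k-1}$ fails. A secondary technical nuisance is the discontinuity of $\tf_{L_2}$ at $\zz=\xx$, which prevents direct application of the extended weak formulation; this is handled by approximating $\tf_{L_2}$ from below by a continuous Lipschitz ramp near $\xx$ and passing to the limit, the resulting additional contributions being manageable via~\eqref{eq:ws2} and the condition~\eqref{eq:mass_conservation_condition} that mass-conserving solutions satisfy.
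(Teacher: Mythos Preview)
Your proposal is correct and follows essentially the same route as the paper: the same test function $\tf(\zz)=\zz^{k}\textbf{1}_{(L_1,L_2)}(\zz)+\zz L_2^{k-1}\textbf{1}_{(L_2,\infty)}(\zz)$, the same key step $\Upsilon_\tf\le 0$, and the same limit $L_2\to\infty$. Your organization is tidier than the paper's, though: by exploiting the decomposition~\eqref{eq:masstransfer} you reduce the verification of $\Upsilon_\tf\le 0$ to a three-case analysis in a single variable, whereas the paper carries out a full nine-case split in $(\xx',\yy')$; and your use of monotone convergence in $L_2$ (valid since $\tf_{L_2}$ is indeed non-decreasing in $L_2$ for $k\ge 1$) is a clean alternative to the paper's appeal to Fatou's lemma.
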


		\begin{proof}  Let $0<{L_1}<{L_2}$ and consider the function 
			\begin{equation*}
			\tf(\xx)={\xx}^{k} \textbf{1}_{({L_1},L_2)}(\xx)+\xx {L_2}^{k-1} \textbf{1}_{({L_2},\infty)}(\xx), \qquad \xx\in(0,\infty).
			\end{equation*}
			First we  study the properties of the function $\Upsilon_\tf$ defined in~\eqref{eq:zeta}, using \eqref{eq:masstransfer} and \eqref{eq:Blocalc}.

				\textit{Case~1.}  For $(\xx,\yy)\in (0,{L_1})^2$, we have $\tf(\xx)=0$ and $\tf(\yy)=0$, so that 
				\begin{align*}
					\Upsilon_\tf(\xx,\yy)=0.
				\end{align*}
				
			\textit{Case~2.} For $(\xx,\yy)\in (0,{L_1})\times ({L_1},{L_2}) $, we have $\tf(\xx)=0$ and $\tf(\yy)={\yy}^{k}$, so that 
				\begin{align*}
					\Upsilon_\tf(\xx,\yy)&=\int_{L_1}^{\yy} {\zz}^{k} \B (\zz,\yy,\xx)d\zz-{\yy}^{k}\\
					&\leq  {\yy}^{k-1}\int_0^{\yy} \zz \B (\zz,\yy,\xx)d\zz -{\yy}^{k}=0.
				\end{align*}
				
				\textit{Case~3.} For $(\xx,\yy)\in ({L_1},{L_2})\times (0,{L_1}) $, we have   $\Upsilon_\tf(\xx,\yy) = \Upsilon_\tf(\yy,\xx)$ and we deduce from \textit{Case~2} that $\Upsilon_\tf(\xx,\yy)\le 0$.
				
				\textit{Case~4.} For $(\xx,\yy)\in ({L_1},{L_2})^2 $, we have  $\tf(\xx)={\xx}^{k}$ and $\tf(\yy)={\yy}^{k}$, so that 
				\begin{align*}
					\Upsilon_\tf(\xx,\yy)&=\int_{L_1}^\xx {\zz}^{k} \B (\zz,\xx,\yy)d\zz+\int_{L_1}^\yy {\zz}^{k} \B (\zz,\yy,\xx)d\zz-{\xx}^{k}-{\yy}^{k}\\
					&\leq 0.
				\end{align*}

				\textit{Case~5.} For $(\xx,\yy)\in (0,{L_1})\times ({L_2},\infty) $, we have  $\tf(\xx)=0$ and $\tf(\yy)=\yy {L_2}^{k-1}$, so that 
				\begin{align*}
					\Upsilon_\tf(\xx,\yy)&=\int_{L_1}^{L_2} \zz ^{k}\B (\zz,\yy,\xx)d\zz+\int_{L_2}^\yy \zz {L_2}^{k-1} \B (\zz,\yy,\xx)d\zz-\yy{L_2}^{k-1}\\
					& \leq {L_2}^{k-1} \int_{L_1}^\yy \zz\B (\zz,\yy,\xx)d\zz-\yy{L_2}^{k-1} \leq 0.
				\end{align*}
				
				\textit{Case~6.} For $(\xx,\yy)\in ({L_2},\infty) \times  (0,{L_1})$, we have  $\Upsilon_\tf(\xx,\yy) = \Upsilon_\tf(\yy,\xx)$ and we deduce from \textit{Case~5} that $\Upsilon_\tf(\xx,\yy)\le 0$.

				\textit{Case~7.} For $(\xx,\yy)\in ({L_1},{L_2})\times ({L_2},\infty) $, we have  $\tf(\xx)={\xx}^{k}$ and $\tf(\yy)=\yy {L_2}^{k-1}$, so that 
				\begin{align*}
					\Upsilon_\tf(\xx,\yy)&=\int_{L_1}^\xx {\zz}^{k} \B (\zz,\xx,\yy)d\zz+\int_{L_1}^{L_2} \zz ^{k}\B (\zz,\yy,\xx)d\zz \\
					&+\int_{L_2}^\yy \zz {L_2}^{k-1} \B (\zz,\yy,\xx)d\zz-{\xx}^{k}-\yy{L_2}^{k-1}\\
					&\leq {\xx}^{k-1}\int_{L_1}^\xx \zz \B (\zz,\xx,\yy)d\zz +{L_2}^{k-1} \int_{L_1}^\yy \zz \B (\zz,\yy,\xx)d\zz\\
					&\qquad -{\xx}^{k}-\yy{L_2}^{k-1} \leq 0.
				\end{align*}
				
				\textit{Case 8.} For $(\xx,\yy)\in ({L_2},\infty) \times  ({L_1},{L_2})$, we have  $\Upsilon_\tf(\xx,\yy) = \Upsilon_\tf(\yy,\xx)$ and we deduce from \textit{Case~7} that $\Upsilon_\tf(\xx,\yy)\le 0$.
				
				 \textit{Case~9.} For $(\xx,\yy)\in ({L_2},\infty)^2$, we have $\tf(\xx)= \xx {L_2}^{k-1}$ and $\tf(\yy)=\yy {L_2}^{k-1}$, so that  
				\begin{align*}
					\Upsilon_\tf(\xx,\yy)&=\int_{L_1}^{L_2} {\zz}^{k} \B (\zz,\xx,\yy)d\zz + \int_{L_2}^\xx \zz {L_2}^{k-1} \B (\zz,\xx,\yy)d\zz-\xx {L_2}^{k-1} \\
					& \qquad +\int_{L_1}^{L_2} \zz ^{k}\B (\zz,\yy,\xx)d\zz  +\int_{L_2}^\yy \zz {L_2}^{k-1} \B (\zz,\yy,\xx)d\zz-\yy{L_2}^{k-1}\\
					&\leq {L_2}^{k-1} \int_{L_1}^\xx \zz \B (\zz,\xx,\yy)d\zz +{L_2}^{k-1} \int_{L_1}^\yy \zz \B (\zz,\yy,\xx)d\zz\\
					&\qquad -{\xx} {L_1}^{k-1}-\yy{L_2}^{k-1} \leq 0.
					\end{align*}
		
			\medskip
			
			Since $\Upsilon_\tf(\xx,\yy) \leq 0$ for all $(\xx,\yy)\in (0,\infty)^2$, it follows from~\eqref{eq:wf} that
			\begin{align*}
				\int_{L_1}^{L_2} {\xx}^{k}(\ff (t,\xx)-\ff^{\mbox{\rm{\mbox{in}}}}(\xx))d\xx+ {L_2}^{k-1}\int_{L_2}^\infty \xx(\ff (t,\xx)-\ff^{\mbox{\rm{\mbox{in}}}}(\xx))d\xx\leq 0
			\end{align*}
			for $t\in (0,T)$, which can be rewritten as 
			\begin{align*}
				\int_{L_1}^\infty \min\{{\xx}^{k}, {L_2}^{k-1}\xx\}\ff (t,\xx)d\xx  \leq  \int_{L_1}^\infty {\xx}^{k} \ff^{\mbox{\rm{\mbox{in}}}}(\xx)d\xx
			\end{align*}
            for $t\in (0,T)$. The claimed result follows naturally from allowing ${L_2} \rightarrow \infty$ in the preceding inequality with the help of Fatou's lemma. 
			
		\end{proof}

		\section{Existence by a compactness approach}\label{sec:eca}
		
		This section is devoted to the proof of Theorem~\ref{maintheorem}.   We fix
		\begin{equation} 
			k_0\in (0,1), \qquad p\in (1,1+k_0), \label{eq:fp}
		\end{equation} 
		and consider $\bb$ and $\kk$ satisfying~\eqref{eq:masstransfer}, \eqref{eq:Blocalc}, \eqref{eq:nonintegrable}, and~\eqref{eq:kernel}, respectively. Let $\ff^{\mbox{\rm{\mbox{in}}}} \in  X_{k_0,+} \cap X_{1}$ be an initial condition satsifying $\rho = \mm_1(\ff^{\mbox{\rm{\mbox{in}}}}) > 0$ as well as the additional integrability condition~\eqref{eq:additionalinitiadata}; that is, $\ff^{\mbox{\rm{\mbox{in}}}} \in  X_{1+k_0}$ .

		With a properly constructed estimate for the collision kernel $\kk$, we first demonstrate the well-posedness to \eqref{eq:main} which is based on Picard-Lindel\"of theorem. More specifically,  the truncated collision kernel $\kk_n$ and the initial data $\ff_n^{\mbox{\rm{\mbox{in}}}}$ for integer values of $n\ge 1$ are defined as
		
		\begin{equation}
			\kk_n(\xx, \yy) := \kk(\xx, \yy)\textbf{1}_{(1/n,n)}(\xx)\textbf{1}_{(1/n,n)}(\yy),~ (\xx, \yy) \in (0, \infty)^2, \label{eq:tkernel}
		\end{equation}
		and
		\begin{equation}
			\ff_n^{\mbox{\rm{\mbox{in}}}}:= \ff^{\mbox{\rm{\mbox{in}}}}\textbf{1}_{(0,2n)}. \label{eq:tin}
		\end{equation}

		\begin{prop}\label{prop:locwp}
			For each $n\ge 1$, there is a unique strong solution
			\begin{equation*}
			\ff_n\in C^{1}([0,\infty),X_{k_0,+}\cap X_{1})
			\end{equation*}   
			to
			\begin{equation}
				\partial_{t}\ff_n(t,\xx)=\mathcal{F}_n(\ff_n(t,\xx))-\mathcal{L}_n(\ff_n(t,\xx)), ~~(t,\xx)\in (0,\infty)^2, \label{eq:tmain}
			\end{equation}
			where $\mathcal{F}_n$ and $\mathcal{L}_n$ are defined by~\eqref{eq:formation} and~\eqref{eq:loss}, respectively, with $\kk_n$ instead of $\kk$, and
			\begin{equation}
				\ff_n(0,\xx)=\ff_n^{\mbox{\rm{\mbox{in}}}}(\xx)\geq0, ~~ \xx\in (0,\infty). \label{eq:t_initialdata}
			\end{equation}
			 Moreover, $\ff_n$ satisfies $\ff_n(t,\xx)=0$ for a.e. $\xx > 2n$ and $t\ge0$ and it is also a mass-conserving weak solution to~\eqref{eq:tmain}-\eqref{eq:t_initialdata} on $[0,\infty)$; that is,
			\begin{align}
				\frac{d}{dt}\int_{0}^{\infty}\tf(\xx)&\ff_n(t,\xx)d\xx \nonumber\\ &=\frac{1}{2}\int_{0}^{\infty}\int_{0}^{\infty}\Upsilon_{\tf}(\xx,\yy)\kk_n(\xx,\yy)\ff_n(t,\xx)\ff_n(t,\yy)d\yy d\xx \label{eq:twf}
			\end{align}
			for all $t>0$ and $\tf \in \mathscr{T}^{k_0}$, as well as
			\begin{equation}
				\mm_{1}(\ff_n(t)) = \mm_{1}(\ff_n^{\mbox{\rm{\mbox{in}}}}), \quad t\geq0. \label{mass_truncated_prop}
			\end{equation}
			 Finally, $\ff_n\in  L^\infty((0,\infty),X_{1+k_0})$ with
			\begin{equation}
				\int_{\xx}^{\infty}\yy^{k_0+1}\ff_n(t,\yy)d\yy \le \int_{\xx}^{\infty}\yy^{k_0+1}\ff^{\mbox{\rm{\mbox{in}}}}(\yy)d\yy  \label{tail_truncated_prop}
			\end{equation}
			 for all $(t,\xx)\in [0,\infty) \times (0,\infty)$, which implies in particular that
			\begin{equation}
				\mm_{k_0+1}(\ff_n(t))\leq \mm_{k_0+1}(\ff^{\mbox{\rm{\mbox{in}}}}),\quad t\geq0. \label{k_0+1_truncated_prop}
			\end{equation}
		\end{prop}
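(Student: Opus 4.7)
The approach is to apply the Banach fixed-point theorem to the mild formulation of~\eqref{eq:tmain} to construct a local-in-time strong solution, extend it globally using mass conservation together with the bounded support of $\ff_n$, and then derive the tail and moment bounds directly from \Cref{lem:mtail}.

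Since $\kk_n$ is bounded with $\|\kk_n\|_{\infty}\le 2n^{\lambda_1+\lambda_2}$ and supported in $(1/n,n)^2$, any $\ff\in X_{k_0,+}$ satisfies $\int\kk_n(\xx,\yy)\ff(\yy)\,d\yy \le 2n^{\lambda_1+\lambda_2+k_0}\|\ff\|_{k_0}$, because $\yy^{k_0}\ge n^{-k_0}$ on $(1/n,n)$. Combining this with hypothesis~\eqref{eq:nonintegrable} taken at $p=1$, which yields
\begin{equation*}
\int_0^{\xx+\yy}\zz^{k_0}\bb(\zz,\xx,\yy)\,d\zz \le E_{k_0,1}(\xx^{k_0}+\yy^{k_0}),
\end{equation*}
one checks that the right-hand side $\ff\mapsto \mathcal{F}_n\ff-\mathcal{L}_n\ff$ is locally Lipschitz on $X_{k_0}$. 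Rewriting~\eqref{eq:tmain} in mild form via the integrating factor $\exp\bigl(-\int_0^t\int\kk_n(\xx,\yy)\ff(s,\yy)\,d\yy\,ds\bigr)$ makes the associated contraction map preserve non-negativity, so Picard-Lindel\"of provides a unique local-in-time non-negative $C^1$ strong solution $\ff_n$. Support preservation is automatic: for $\xx>2n$ the loss term $\mathcal{L}_n\ff_n(t,\xx)$ vanishes because $\kk_n(\xx,\cdot)\equiv 0$, and the formation term vanishes as well, since any $\yy$ contributing to $\mathcal{F}_n\ff_n(t,\xx)$ must satisfy both $\yy<2n$ (from the truncation of $\kk_n$) and $\yy>\xx>2n$; combined with $\ff_n(0,\xx)=0$ for $\xx>2n$, this gives $\ff_n(t,\xx)=0$ a.e.\ on $\{\xx>2n\}$.

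To derive the weak formulation~\eqref{eq:twf}, I would multiply the strong equation by $\tf\in\mathscr{T}^{k_0}$ and integrate in $\xx$, applying Fubini and the standard change of variables $(\yy,\zz)\mapsto(\yy-\zz,\zz)$, all justified by the compact support of $\ff_n$ and the boundedness of $\kk_n$. Mass conservation~\eqref{mass_truncated_prop} then follows by choosing $\tf$ to be a truncation of the identity that coincides with $\xx$ on $[0,2n]$: on the support of $\ff_n(t)\otimes\ff_n(t)$ the integrand $\Upsilon_\tf(\xx,\yy)$ vanishes by~\eqref{eq:localc}. Together with the bounded support, mass conservation supplies a uniform a priori bound on $\|\ff_n(t)\|_{k_0}+\|\ff_n(t)\|_1$, so the local solution extends globally.

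Finally, for~\eqref{tail_truncated_prop} and~\eqref{k_0+1_truncated_prop}, I observe that $\kk_n\le\kk$ and that $\ff_n$ is a mass-conserving weak solution of the equation with $\kk_n$ in place of $\kk$, while $\ff^{\mbox{\rm{\mbox{in}}}}\in X_{k_0+1}$ by~\eqref{eq:additionalinitiadata}. Applying \Cref{lem:mtail} with $\bar\Phi=\kk_n$ and $k=k_0+1\ge 1$ yields~\eqref{tail_truncated_prop} directly, and letting $\xx\downarrow 0$ gives~\eqref{k_0+1_truncated_prop}. The main obstacle in this plan is the Lipschitz estimate for $\mathcal{F}_n$: the non-integrability of $\bb$ rules out any naive bound on $\int\bb(\zz,\xx,\yy)\,d\zz$, and the weighted integrability~\eqref{eq:nonintegrable} must be used precisely to close the estimate in the $X_{k_0}$-norm; the bounded support of $\ff_n$ then transports the control to $X_1$ as required by the statement.
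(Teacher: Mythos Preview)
Your overall plan is sound and close to the paper's, with a few genuine differences worth noting. For non-negativity you use the mild formulation with an integrating factor, whereas the paper replaces $\mathcal{F}_n$ by its positive part $(\mathcal{F}_n\ff)_+$, applies Picard--Lindel\"of to that modified equation, and then runs a Gronwall argument on $\|(-\ff_n)_+\|_{k_0}$ to recover non-negativity. For the support property $\ff_n(t,\cdot)=0$ on $(2n,\infty)$ you argue directly from the structure of $\mathcal{F}_n$ and $\mathcal{L}_n$, while the paper derives it a posteriori from \Cref{lem:mtail} by evaluating the tail bound at $\xx=2n$. And for the $X_1$-regularity, the paper establishes local Lipschitz continuity of $\mathcal{F}_n-\mathcal{L}_n$ separately on $X_{k_0}$ and on $X_1$ (via the bounds $\kk_n(\xx,\yy)\le 4n^{\lambda+k_0}\xx^{k_0}\yy^{k_0}/(\xx^{k_0}+\yy^{k_0})$ and $\kk_n(\xx,\yy)\le 4n^{\lambda+1}\xx\yy/(\xx+\yy)$), rather than deducing it from the $X_{k_0}$ theory plus bounded support as you do. All three of your alternatives are legitimate and, if anything, slightly more direct.

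There is, however, one genuine gap. Your sentence ``Together with the bounded support, mass conservation supplies a uniform a~priori bound on $\|\ff_n(t)\|_{k_0}+\|\ff_n(t)\|_1$'' is not correct as stated. Mass conservation controls $\|\ff_n(t)\|_1$, and the support bound $\ff_n(t,\cdot)=0$ on $(2n,\infty)$ lets you estimate $\int_1^{2n}\xx^{k_0}\ff_n\le\int_1^{2n}\xx\ff_n\le\rho$, but neither ingredient controls $\int_0^1\xx^{k_0}\ff_n(t,\xx)\,d\xx$: fragmentation produces arbitrarily small daughters, so the support is $(0,2n)$, not bounded away from zero. To close this you must argue as the paper does: use~\eqref{eq:nonintegrable} and a bound such as $\kk_n(\xx,\yy)\le 4n^{\lambda+1}\xx^{k_0}\yy/(\xx^{k_0}+\yy^{k_0})$ to obtain the differential inequality $\frac{d}{dt}\|\ff_n(t)\|_{k_0}\le C_n\rho\,\|\ff_n(t)\|_{k_0}$, and then Gronwall gives at most exponential growth of $\|\ff_n(t)\|_{k_0}$, which rules out blow-up and yields $T_n=\infty$. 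Without this step your global-extension argument is incomplete.
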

		
		\begin{proof}
			First, we will show that  the operator $\mathcal{F}_n-\mathcal{L}_n$ is locally Lipschitz continuous on $X_{k_0}\cap X_{1}$.
			Let $(\ff,\g) \in X_{k_0} \times X_{k_0}$ and observe the following bounds on $\kk_n$:
			\begin{align}
				\kk_n(\xx,\yy) & \leq  4n^{\lambda+k_0} \frac{{\xx}^{k_0} {\yy}^{k_0}}{{\xx}^{k_0}+{\yy}^{k_0}}, \qquad (\xx,\yy)\in (0,\infty)^2, \label{eq:kn_Xk_0_lip} \\
				\kk_n(\xx,\yy) & \leq  4n^{\lambda+1} \frac{\xx \yy }{\xx+\yy},  \qquad (\xx,\yy)\in (0,\infty)^2, \label{eq:kn_X1_lip} \\
				\kk_n(\xx,\yy) &\leq  4n^{\lambda+1} \frac{{\xx}^{k_0} \yy}{{\xx}^{k_0}+{\yy}^{k_0}},  \qquad (\xx,\yy)\in (0,\infty)^2. \label{eq:kn_finite_time_blowup}
			\end{align}

			It follows from~\eqref{eq:masstransfer},  \eqref{eq:Blocalc}, \eqref{eq:nonintegrable}, \eqref{eq:kn_Xk_0_lip}, and the Fubini-Tonelli theorem that
			
			\begin{equation*}
				\begin{split}
					\lvert \lvert \mathcal{F}_n\ff-\mathcal{F}_n\g \rvert \rvert_{k_0}&\leq \frac{1}{2} \int_{0}^{\infty}\int_{\xx}^{\infty}\int_{0}^{\yy }{\xx}^{k_0}\bb (\xx,\yy -\zz,\zz)\kk_n(\yy -\zz,\zz)\\
					& \qquad\qquad \times \lvert \ff (\yy -\zz)\ff (\zz)-\g(\yy -\zz)\g(\zz) \rvert d\zz d\yy d\xx \\
					&=\frac{1}{2} \int_{0}^{\infty}\int_{0}^{\infty}\int_{0}^{\yy+\zz}{\xx}^{k_0}\bb (\xx,\yy,\zz)\kk_n(\yy ,\zz )\\
					& \qquad\qquad\qquad \times \lvert \ff (\yy)\ff (\zz )-\g(\yy)\g(\zz ) \rvert d\xx d\yy d\zz \\
					&\leq \frac{E_{k_0,1}}{2}\int_{0}^{\infty}\int_{0}^{\infty}({\yy}^{k_0}+{\zz }^{k_0})\kk_n(\yy ,\zz )\\
					&\qquad\qquad\qquad \times(\lvert \ff (\yy)\rvert \lvert (\ff-\g)(\zz ) \rvert + \lvert \g(\zz )\rvert \lvert (\ff-\g)(\yy) \rvert ) d\yy d\zz \\
					& \leq  2E_{k_0,1}n^{\lambda+k_0} \int_{0}^{\infty}\int_{0}^{\infty} {\yy}^{k_0}{\zz }^{k_0}\lvert \ff (\yy)\rvert \lvert (\ff-\g)(\zz ) \rvert d\yy d\zz \\
					&\qquad + 2E_{k_0,1}n^{\lambda+k_0} \int_{0}^{\infty}\int_{0}^{\infty} {\yy}^{k_0}{\zz }^{k_0} \lvert \g(\zz )\rvert \lvert (\ff-\g)(\yy) \rvert d\yy d\zz \\
					&\leq  2E_{k_0,1}n^{\lambda+k_0}(\lvert \lvert \ff \rvert \rvert_{k_0}+\lvert \lvert \g \rvert \rvert_{k_0})\lvert \lvert \ff-\g \rvert \rvert_{k_0}.
				\end{split}
			\end{equation*}

			Similarly, using~\eqref{eq:kn_Xk_0_lip}, we estimate
			
			\begin{equation*}
				\begin{split}
					\lvert \lvert \mathcal{L}_n\ff-\mathcal{L}_n\g \rvert \rvert_{k_0} &\leq \int_{0}^{\infty}\int_{0}^{\infty}{\yy}^{k_0}\kk_n(\yy,\zz )\lvert \ff (\yy)\ff (\zz )-\g(\yy)\g(\zz ) \rvert d\yy d\zz \\
					&\leq 4n^{\lambda+k_0} \int_{0}^{\infty}\int_{0}^{\infty}{\yy}^{k_0}{\zz}^{k_0} \lvert \ff (\yy)\rvert \lvert (\ff-\g)(\zz ) \rvert d\yy d\zz \\
					& \qquad + 4n^{\lambda+k_0} \int_{0}^{\infty}\int_{0}^{\infty}{\yy}^{k_0}{\zz}^{k_0} \lvert \g(\zz )\rvert \lvert (\ff-\g)(\yy) \rvert d\yy d\zz \\
					&\leq 4n^{\lambda+k_0}(\lvert \lvert \ff \rvert \rvert_{k_0}+\lvert \lvert \g \rvert \rvert_{k_0})\lvert \lvert \ff-\g \rvert \rvert_{k_0}.
				\end{split}
			\end{equation*}
			
			We have therefore showed the operator $\mathcal{F}_n-\mathcal{L}_n$ is locally Lipschitz continuous on $X_{k_0}$.
			
			Next, let $(\ff,\g) \in X_{1} \times X_{1}$.  It follows from~\eqref{eq:masstransfer},  \eqref{eq:Blocalc}, \eqref{eq:nonintegrable}, \eqref{eq:kn_X1_lip}, and the Fubini-Tonelli theorem that
			
			\begin{align*}
					\lvert \lvert \mathcal{F}_n\ff-\mathcal{F}_n\g \rvert \rvert_{1}&\leq \frac{1}{2} \int_{0}^{\infty}\int_{\xx}^{\infty}\int_{0}^{\yy} \xx\bb (\xx,\yy-\zz ,\zz )\kk_n(\yy-\zz ,\zz )\\
					& \qquad\qquad \times \lvert \ff (\yy-\zz )\ff (\zz )-\g(\yy-\zz )\g(\zz ) \rvert d\zz d\yy d\xx \\
					&=\frac{1}{2} \int_{0}^{\infty}\int_{0}^{\infty}\int_{0}^{\yy+\zz }\xx\bb (\xx,\yy,\zz)\kk_n(\yy,\zz )\\
					&\qquad\qquad\qquad \times \lvert \ff (\yy)\ff (\zz )-\g(\yy)\g(\zz ) \rvert d\xx d\yy d\zz \\
					&\leq \frac{1}{2}\int_{0}^{\infty}\int_{0}^{\infty}(\yy+\zz)\kk_n(\yy,\zz ) \Big[ \lvert \ff (\yy)\rvert \lvert (\ff-\g)(\zz ) \rvert \\
					& \hspace{6cm} + \lvert \g(\zz )\rvert \lvert (\ff-\g)(\yy) \rvert \Big] d\yy d\zz \\
					& \leq  2n^{\lambda+1} \int_{0}^{\infty}\int_{0}^{\infty} \yy \zz \lvert \ff (\yy)\rvert \lvert (\ff-\g)(\zz ) \rvert d\yy d\zz\\
					&\qquad + 2n^{\lambda+1} \int_{0}^{\infty}\int_{0}^{\infty} \yy \zz \lvert \g(\zz )\rvert \lvert (\ff-\g)(\yy) \rvert d\yy d\zz \\
					&\leq  2n^{\lambda+1}(\lvert \lvert \ff \rvert \rvert_{1}+\lvert \lvert \g \rvert \rvert_{1})\lvert \lvert \ff-\g \rvert \rvert_{1}.
			\end{align*}
			
			Similarly, using~\eqref{eq:kn_X1_lip}, we have
			\begin{equation*}
				\begin{split}
					\lvert \lvert \mathcal{L}_n\ff-\mathcal{L}_n\g \rvert \rvert_{1} &\leq \int_{0}^{\infty}\int_{0}^{\infty}\yy\kk_n(\yy,\zz )\lvert \ff (\yy)\ff (\zz )-\g(\yy)\g(\zz ) \rvert d\yy d\xx \\
					&\leq 4n^{\lambda+1} \int_{0}^{\infty}\int_{0}^{\infty}\yy\zz \lvert \ff (\yy)\rvert \lvert (\ff-\g)(\zz ) \rvert d\yy d\zz \\
					&\qquad + 4n^{\lambda+1} \int_{0}^{\infty}\int_{0}^{\infty}\yy\zz \lvert \g(\zz )\rvert \lvert (\ff-\g)(\yy) \rvert )d\yy d\zz \\
					&\leq 4n^{\lambda+1}(\lvert \lvert \ff \rvert \rvert_{1}+\lvert \lvert \g \rvert \rvert_{1})\lvert \lvert \ff-\g \rvert \rvert_{1}.
				\end{split}
			\end{equation*}
			Thus, we have shown that $\mathcal{F}_n-\mathcal{L}_n$ is locally Lipschitz continuous on $X_1$.
			
			Introducing the operator
			\begin{equation}
				\Bar{\mathcal{F}_n} \ff = \big( \mathcal{F}_n \ff \big)_{+} = \max\big\{ \mathcal{F}_n \ff , 0 \big\}, \qquad u\in X_{k_0}\cap X_1, \label{eq:tmain_+veG}
			\end{equation}
			it is also a locally Lipschitz continuous map from $X_{k_0} \cap X_1$ to $X_{k_0} \cap X_1$. Consequently, the Picard-Lindel\"of theorem guarantees that there are  $T_n\in(0,\infty]$ and a unique function $\ff_n\in C^{1}([0,T_n),X_{k_0} \cap X_1)$ such that $\ff_n$ solves 
			\begin{equation}
				\partial_{t}\ff_n(t,\xx)=\bar{\mathcal{F}_n}(\ff_n(t,\xx))-\mathcal{L}_n(\ff_n(t,\xx))~ \text{\mbox{\rm{\mbox{in}}}} ~ (0,T_n)\times (0,\infty) \label{eq:treg}
			\end{equation}
			with initial condition $\ff_n(0,\cdot)=\ff_n^{\mbox{\rm{\mbox{in}}}}$, recalling that the latter is defined in~\eqref{eq:tin}. In addition, there are two possibilities:  either $T_n=\infty$ or $T_n<\infty$ and $(\lvert \lvert \ff (t) \rvert \rvert_{k_0} +\lvert \lvert \ff (t) \rvert \rvert_1)\rightarrow \infty $ as $t \rightarrow T_n$.
			
			Now, it follows from~\eqref{eq:kn_Xk_0_lip}, \eqref{eq:tmain_+veG}, and~\eqref{eq:treg} that, for $t\in (0,T_n)$,
			\begin{equation*}
				\begin{split}
					& \hspace{-0.5cm}  \frac{d}{dt}\int_{0}^{\infty}{\xx}^{k_0}(-\ff_n(t,\xx))_{+}d\xx \\
					&= -\int_{0}^{\infty}{\xx}^{k_0}sign_{+}(-\ff_n(t,\xx))\partial_{t}\ff_n(t,\xx)d\xx \\
					&\leq \int_{0}^{\infty}{\xx}^{k_0}sign_{+}(-\ff_n(t,\xx))\ff_n(t,\xx) \int_{0}^{\infty}\kk_n(\xx,\yy)\ff_n(t,\yy)d\yy d\xx \\
					& \leq \int_{0}^{\infty}{\xx}^{k_0}(-\ff_n(t,\xx))_{+} \int_{0}^{\infty}\kk_n(\xx,\yy)\lvert \ff_n(t,\yy)\rvert  d\yy d\xx \\
					& \leq  4n^{\lambda+k_0}\lvert \lvert \ff_n(t) \rvert \rvert_{k_0} \lvert \lvert (-\ff_n(t))_{+} \rvert \rvert_{k_0}.
				\end{split}
			\end{equation*}
			 Hence, after integrating with respect to time,
			\begin{equation*}
				\lvert \lvert (-\ff_n(t))_{+} \rvert \rvert_{k_0} \leq \lvert \lvert (-\ff_n^{\mbox{\rm{\mbox{in}}}})_{+} \rvert \rvert_{k_0}  \exp \{ 4n^{\lambda+k_0}\int_{0}^{t}\lvert \lvert \ff_n(s) \rvert \rvert_{k_0}  ds \}=0,
			\end{equation*}
			which shows that $\ff_n (t) \in X_{k_0,+}$ for all $t\in (0,T_n)$. The non-negativity of $\ff_n$ readily implies that $ \Bar{\mathcal{F}_n}\ff_n= {\mathcal{F}_n}\ff_n$ and we deduce from~\eqref{eq:treg} that $\ff_n\in C^{1}([0,T_n),X_{k_0} \cap X_1)$ solves \eqref{eq:tmain}-\eqref{eq:t_initialdata}.
			
			Next, it follows from \eqref{eq:kn_Xk_0_lip} that
			\begin{align*}
				\int_{0}^{\infty} \int_{0}^{\infty}({\xx}^{k_0}+{\yy}^{k_0})\kk_n(\xx,\yy)\ff_n(\tau,x)\ff_n(\tau,y)d\yy d\xx  & \leq  4n^{\lambda+k_0}\lvert \lvert \ff_n(\tau) \rvert \rvert^2_{k_0} \\
				&\leq 4n^{\lambda+k_0} \sup_{\tau\in [0,t]}\{\lvert \lvert \ff_n(\tau) \rvert \rvert^2_{k_0} \}
			\end{align*}
			for $\tau \in (0, t)$ and $t \in (0, T_n)$, so that $\ff_n$ satisfies~\eqref{eq:ws2}. Thus, $\ff_n$ is a weak solution to \eqref{eq:tmain}-\eqref{eq:t_initialdata} on $[0, T_n)$. 
			
			 Next, it follows from~\eqref{eq:kn_X1_lip} that, for any $t \in (0, T_n)$ and $\tau \in (0, t)$, the following inequality holds:
			\begin{align*}
				\int_{0}^{\infty} \int_{0}^{\infty}(\xx+\yy)\kk_n(\xx,\yy)\ff (\tau,\xx)\ff (\tau,\yy)d\xx d\yy \leq 4n^{\lambda+1} \sup_{\tau\in [0,t]}{\lvert \lvert \ff_n(\tau) \rvert \rvert_{1}^2}.
			\end{align*}
			Thus, based on Proposition~\ref{prop:massconservation} applied with $\bar{\Phi}=\Phi_n \le \Phi$, it follows that $\ff_n$ is a mass-conserving weak solution to~\eqref{eq:tmain}-\eqref{eq:t_initialdata} on the interval $[0, T_n)$. Therefore,
			\begin{equation}
				\lvert \lvert \ff_n(t) \rvert \rvert_1= \mm_1(\ff_n(t))=\mm_1(\ff_n^{\mbox{\rm{\mbox{in}}}})=  \lvert \lvert \ff_n^{\mbox{\rm{\mbox{in}}}}(t) \rvert \rvert_1\leq \rho , ~ t\in [0,T_n). \label{eq:tmassconservation}
			\end{equation}
			
		 Also, let $t\in (0,T_n)$. Substituting $\tf(\xx)=W_{k_0}(\xx) = \xx^{k_0}$ into~\eqref{eq:twf} and using~\eqref{eq:nonintegrable}, \eqref{eq:kn_finite_time_blowup},  and~\eqref{eq:tmassconservation} , we get
		\begin{equation*}
			\begin{split}
				\frac{d}{dt}\lvert \lvert \ff_n(t) \rvert \rvert_{k_0}  &\leq \frac{1}{2}\int_{0}^{\infty}\int_{0}^{\infty}\Upsilon_{W_{k_0}}(\xx,\yy)\kk_n(\xx,\yy)\ff_n(t,\xx)\ff_n(t,\yy)d\yy d\xx \\
				& \leq \frac{E_{k_0,1}}{2}\int_{0}^{\infty}\int_{0}^{\infty}({\xx}^{k_0} +{\yy}^{k_0})\kk_n(\xx,\yy)\ff_n(t,\xx)\ff_n(t,\yy)d\yy d\xx \\
				& \leq 2E_{k_0,1}n^{\lambda+2k_0+1}\int_{0}^{\infty}\int_{0}^{\infty}{\xx}^{k_0} \yy\ff_n(t,\xx)\ff_n(t,\yy)d\yy d\xx \\
				&\leq 2E_{k_0,1}n^{\lambda+1} \lvert \lvert \ff_n(t) \rvert \rvert_{k_0} \lvert \lvert \ff_n(t) \rvert \rvert_{1} \\
				&\leq 2E_{k_0,1}n^{\lambda+1} \rho \lvert \lvert \ff_n(t) \rvert \rvert_{k_0}.
			\end{split}
		\end{equation*}
		 Consequently, 
			\begin{equation*}
				\lvert \lvert \ff_n(t) \rvert \rvert_{k_0} \le \lvert \lvert \ff_n^{\mbox{\rm{\mbox{in}}}} \rvert \rvert_{k_0} \exp\{ 2 E_{k_0,1} n^{\lambda+1} \rho t\}, \qquad t\in [0,T_n),
			\end{equation*}
		which implies, together with the mass conservation property~\eqref{eq:tmassconservation}, that $(\lvert \lvert \ff (t) \rvert \rvert_{k_0} +\lvert \lvert \ff (t) \rvert \rvert_1)\nrightarrow \infty $  as $t \rightarrow T_n$. Hence $T_n=\infty$. 
		
		\medskip
		
		Finally, since $\ff^{\mbox{\rm{\mbox{in}}}}$ satisfies~\eqref{eq:additionalinitiadata}, by Lemma~\ref{lem:mtail}, we have
			\begin{equation*}
				\int_{\xx}^{\infty}\yy^{k_0+1}\ff_n(t,\yy)d\yy\le \int_{\xx}^{\infty}\yy^{k_0+1}\ff_n^{\mbox{\rm{\mbox{in}}}}(\yy)d\yy\le \int_{\xx}^{\infty}\yy^{k_0+1}\ff^{\mbox{\rm{\mbox{in}}}}(\yy)d\yy 
			\end{equation*}
			 for all $(t,x_1)\in [0,\infty)\times (0,\infty)$. In particular, choosing $x_1=0$ leads to the estimate 
				\begin{equation*}
					\mm_{k_0+1}(\ff_n(t))\leq \mm_{k_0+1}(\ff^{\mbox{\rm{\mbox{in}}}}) , ~ t\in [0,\infty),
				\end{equation*}
				while the choice $\xx = 2n$ and~\eqref{eq:tin} implies that
			\begin{equation*}
				\int_{2n}^{\infty}\yy^{k_0+1}\ff_n(t,\yy)d\yy\le \int_{2n}^{\infty}\yy^{k_0+1}\ff^{\mbox{\rm{\mbox{in}}}}(\yy)d\yy=0 ,\quad t\in [0,\infty).
			\end{equation*}
			Hence, owing to the non-negativity of $\ff_n$,
			\begin{equation*}
				\ff_n(t,\xx) = 0~\text{ for a.e. }~ \xx>2n~\text{and}~ t\in [0,\infty),
			\end{equation*}
			{and the proof of \Cref{prop:locwp} is complete.}
		\end{proof}
		
		 Having proved the well-posedness of~\eqref{eq:tmain}-\eqref{eq:t_initialdata} for each $n\ge 1$, we now turn to the derivation of estimates which do not depend on $n\ge 1$, in order to be able to find cluster points of the sequence $(u_n)_{n\ge 1}$ as $n\to\infty$, which are natural candidates as solutions to~\eqref{eq:main}-\eqref{eq:in} according to the choice~\eqref{eq:tkernel} of $\Phi_n$. Recalling~\eqref{mass_truncated_prop} and~\eqref{k_0+1_truncated_prop}, we have already obtained estimates on $(u_n)_{n\ge 1}$ for large size particles which do not depend on $n\ge 1$. The next step is to look for a similar estimate for small size particles.
		
		\subsection{Control on small size particles}
		We now study the behaviour for small sizes. To this end, we need the following lemma. 
		
		\begin{lem}\label{thm:smallsizebehavious}
			 Let $T_*$ be defined in \Cref{maintheorem}, $T \in (0, T_*)$ and $C_0 > 0$ such that $\mm_{k_{0}}(\ff^{\mbox{\rm{\mbox{in}}}})\leq C_0$.  There exists a positive constant $C_1(T)$ which is independent of $n\geq 1$ and depends solely on $\kk$, $\beta$, $\ff^{\mbox{\rm{\mbox{in}}}}$, $C_0$, and $T$, such that the following inequality holds for all $t\in [0,T]$,
			\begin{align*}
				\mm_{k_0}(\ff_n(t))\leq C_1(T).
			\end{align*}
		\end{lem}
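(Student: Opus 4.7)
The strategy is to derive a differential inequality for $Y_n(t) := \mm_{k_0}(\ff_n(t))$ by testing the weak formulation~\eqref{eq:twf} against $\tf(\xx) = \xx^{k_0}$. Strictly speaking, $\tf$ is unbounded and therefore not in $\mathscr{T}^{k_0}$; however, the compact support of $\ff_n(t,\cdot)$ provided by \Cref{prop:locwp} makes it admissible, since one may truncate $\tf$ at level $L>2n$, apply the formulation, and pass to the limit $L\to\infty$ without any effect.

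By \Cref{lem:m}, combined with~\eqref{eq:masstransfer}, \eqref{eq:Blocalc}, and~\eqref{eq:nonintegrable} at $p=1$, the bound $|\Upsilon_{W_{k_0}}(\xx,\yy)| \leq E_{k_0}(\xx^{k_0}+\yy^{k_0})$ holds. Substituting into~\eqref{eq:twf}, using $\kk_n \leq \kk$ and the symmetry of $\ff_n(t,\xx)\ff_n(t,\yy)$, we reduce to
\[
\frac{d}{dt}Y_n(t) \leq E_{k_0}\bigl[\mm_{k_0+\lambda_1}(\ff_n(t))\,\mm_{\lambda_2}(\ff_n(t)) + \mm_{k_0+\lambda_2}(\ff_n(t))\,\mm_{\lambda_1}(\ff_n(t))\bigr].
\]
Each factor is controlled via the conserved mass $\mm_1(\ff_n) = \rho$ and the uniform bound $\mm_{k_0+1}(\ff_n) \leq \mm_{k_0+1}(\ff^{\mbox{\rm{\mbox{in}}}})$ from~\eqref{k_0+1_truncated_prop}. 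The simplest interpolation, splitting each integral at $\xx=1$, gives $\mm_\alpha(\ff_n) \leq Y_n + \mm_{k_0+1}(\ff^{\mbox{\rm{\mbox{in}}}})$ for $\alpha \in [k_0, k_0+1]$ and $\mm_\alpha(\ff_n) \leq Y_n + \rho$ for $\alpha \in [k_0, 1]$, whence the Riccati-type inequality $\frac{d}{dt}Y_n \leq C_2(1+Y_n)^2$ with $C_2$ depending only on $E_{k_0}$, $\rho$, and $\mm_{k_0+1}(\ff^{\mbox{\rm{\mbox{in}}}})$. Integrating produces $Y_n(t) \leq (1+C_0)/(1-C_2(1+C_0)t) - 1$ valid on $[0,T_{k_0})$ with $T_{k_0} := [C_2(1+C_0)]^{-1}$, which handles the case $\lambda \in (0,1)$ since $Y_n$ is uniformly bounded on any $[0,T]$ with $T<T_{k_0}$.

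For $\lambda \in [1,2]$ the preceding quadratic bound is insufficient to cover arbitrary $T<\infty$, and a sharper estimate is needed. One uses Hölder interpolation between $\mm_{k_0}$ and $\mm_{k_0+1}$, e.g.\ $\mm_{k_0+\lambda_i}(\ff_n) \leq Y_n^{1-\lambda_i}\mm_{k_0+1}(\ff_n)^{\lambda_i}$ and $\mm_{\lambda_j}(\ff_n) \leq Y_n^{k_0+1-\lambda_j}\mm_{k_0+1}(\ff_n)^{\lambda_j-k_0}$, producing $\mm_{k_0+\lambda_i}\mm_{\lambda_j} \leq C\,Y_n^{2+k_0-\lambda}$. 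When $\lambda \geq 1+k_0$ the exponent $2+k_0-\lambda$ does not exceed $1$, and the resulting linear inequality $\frac{d}{dt}Y_n \leq C(1+Y_n)$ gives, via Grönwall, a bound $Y_n(t) \leq (1+C_0)e^{Ct}-1$ valid on any $[0,T]$. The main obstacle is the intermediate band $\lambda \in [1,1+k_0)$, where this exponent exceeds $1$; here one must refine the interpolation, exploiting the hypothesis $\lambda_1 \geq k_0$ and splitting the double integral into subdomains (e.g.\ according to whether $\xx,\yy$ are $\leq 1$ or $>1$ and whether $k_0+\lambda_i\leq 1$), selecting in each piece the Hölder exponent that keeps the $\mm_{k_0}$-exponent below $1$. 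Combining the resulting partial estimates should yield a differential inequality whose solution remains finite on every bounded interval, thereby supplying the desired uniform bound $C_1(T)$ for any $T<\infty$.
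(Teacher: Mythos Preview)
Your overall setup is correct and matches the paper's: test with $\tf=W_{k_0}$, arrive at
\[
\frac{d}{dt}\mm_{k_0}(\ff_n)\le E_{k_0,1}\bigl[\mm_{k_0+\lambda_1}(\ff_n)\,\mm_{\lambda_2}(\ff_n)+\mm_{k_0+\lambda_2}(\ff_n)\,\mm_{\lambda_1}(\ff_n)\bigr],
\]
and then interpolate. The case $\lambda\in(0,1)$ is fine (your quadratic Riccati bound works, though it gives a smaller $T_{k_0}$ than the paper's sharper exponent $1+\frac{1-\lambda}{1-k_0}$). The case $\lambda\ge 1+k_0$ is also fine.

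The genuine gap is the band $\lambda\in[1,1+k_0)$. You correctly diagnose that interpolating both factors between $\mm_{k_0}$ and $\mm_{k_0+1}$ gives exponent $2+k_0-\lambda>1$, and you then propose to ``refine the interpolation'' by splitting the double integral at $\xx=1$, $\yy=1$. That does not work: crude splitting only yields additive bounds of the type $\mm_\alpha(\ff_n)\le Y_n+\text{const}$, which, when multiplied, put you back at a quadratic inequality. No amount of domain-splitting of this kind will bring the $Y_n$-exponent down to $1$.

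What is missing is the use of the \emph{conserved} moment $\mm_1(\ff_n)=\rho$ as an interpolation pivot. Since $\lambda_j\in[k_0,1]$, interpolate $\mm_{\lambda_j}$ between $\mm_{k_0}$ and $\mm_1$ (not $\mm_{k_0+1}$), obtaining $\mm_{\lambda_j}(\ff_n)\le \rho^{(\lambda_j-k_0)/(1-k_0)}Y_n^{(1-\lambda_j)/(1-k_0)}$. For $\mm_{k_0+\lambda_i}$, distinguish two subcases: if $k_0+\lambda_i\le 1$, interpolate between $\mm_{k_0}$ and $\mm_1$; if $k_0+\lambda_i\ge 1$, interpolate between $\mm_1$ and $\mm_{k_0+1}$, which gives a bound \emph{independent of $Y_n$}. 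The resulting total exponent on $Y_n$ is
\[
\frac{1-\lambda_j+(1-k_0-\lambda_i)_+}{1-k_0},
\]
and a short check shows this is $\le 1$ precisely when $\lambda\ge 1$. This is the step that closes the argument globally for $\lambda\in[1,2]$; your interpolation scheme between $\mm_{k_0}$ and $\mm_{k_0+1}$ alone cannot achieve it.
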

		
		\begin{proof}
			For $t\ge 0$ and $\tf(\xx)=W_{k_0}(\xx) = \xx^{k_0}$, we deduce from~\eqref{eq:nonintegrable}, \eqref{eq:kernel}, and~\eqref{eq:twf} that
			\begin{align}
					\frac{d}{dt}\mm_{k_0}(\ff_n(t)) &\leq \frac{1}{2}\int_{0}^{\infty} \int_{0}^{\infty}\Upsilon_{\tf}(\xx,\yy)\kk(\xx,\yy)\ff_n(t,\xx)\ff_n(t,\yy)d\yy d\xx \nonumber \\
					&\leq E_{k_0,1}\int_{0}^{\infty} \int_{0}^{\infty}({\xx}^{k_0}+{\yy}^{k_0}){\xx}^{\lambda_1} \yy ^{\lambda_2} \ff_n(t,\xx)\ff_n(t,\yy)d\yy d\xx \nonumber \\
					&\leq E_{k_0,1}[\mm_{k_0+{\lambda_1}}(\ff_n(t))\mm_{\lambda_2}(\ff_n(t))+\mm_{k_0+{\lambda_2}}(\ff_n(t))\mm_{\lambda_1} (\ff_n(t))]. \label{eq:differetial_small size}
			\end{align}
			As $(\lambda_1,\lambda_2)\in [k_0,1]^2$, we obtain the following moment estimates by means of H\"older's inequality and~\eqref{mass_truncated_prop}
			\begin{equation*}
				\mm_{\lambda_1} (\ff_n)\leq \mm_{k_0} (\ff_n)^{\frac{1-{\lambda_1}}{1-k_0}}\mm_1(\ff_n)^{\frac{{\lambda_1}-k_0}{1-k_0}}\leq \rho^{\frac{{\lambda_1}-k_0}{1-k_0}}\mm_{k_0} (\ff_n)^{\frac{1-{\lambda_1}}{1-k_0}},
			\end{equation*}
			and
			\begin{equation*}
				\mm_{\lambda_2} (\ff_n)\leq \mm_{k_0} (\ff_n)^{\frac{1-{\lambda_2}}{1-k_0}}\mm_1(\ff_n)^{\frac{{\lambda_2}-k_0}{1-k_0}}\leq \rho^{\frac{{\lambda_2}-k_0}{1-k_0}}\mm_{k_0} (\ff_n)^{\frac{1-{\lambda_2}}{1-k_0}}.
			\end{equation*}
			
			In addition, when $k_0+{\lambda_1} \in [k_0,1]$, we have 
			\begin{equation*}
				\mm_{k_0+{\lambda_1}}(\ff_n)\leq \mm_{k_0}(\ff_n)^{\frac{1-k_0-{\lambda_1}}{1-k_0}}\mm_1(\ff_n)^{\frac{{\lambda_1}}{1-k_0}}\leq \rho^{\frac{{\lambda_1}}{1-k_0}} \mm_{k_0}(\ff_n)^{\frac{1-k_0-{\lambda_1}}{1-k_0}},
			\end{equation*}
			and when  $k_0+1 \geq k_0+{\lambda_1}\geq 1$, we get
			\begin{align*}
				\mm_{k_0+{\lambda_1}}(\ff_n)&\leq \mm_1(\ff_n)^{\frac{1-{\lambda_1}}{k_0}}\mm_{k_0+1}(\ff_n)^{\frac{k_0+{\lambda_1}-1}{k_0}}\leq \rho^{\frac{1-{\lambda_1}}{k_0}} \mm_{k_0+1}(\ff_n)^{\frac{k_0+{\lambda_1}-1}{k_0}}\\
				&\leq \rho^{\frac{1-{\lambda_1}}{k_0}} \mm_{k_0+1}(\ff^{\mbox{\rm{\mbox{in}}}})^{\frac{k_0+{\lambda_1}-1}{k_0}},
			\end{align*}
			 where we have used~\eqref{k_0+1_truncated_prop} to obtain the last estimate.

			Thus
			\begin{equation*}\mm_{k_0+{\lambda_1}}(\ff_n)\leq c_1 \mm_{k_0}(\ff_n)^{\frac{(1-k_0-{\lambda_1})_+}{1-k_0}},\end{equation*}
			with 
			
			\begin{align*}
				c_1:=\max\left\{ \rho^{\frac{{\lambda_1}}{1-k_0}}, \rho^{\frac{1-{\lambda_1}}{k_0}} \mm_{k_0+1}(\ff^{\mbox{\rm{\mbox{in}}}})^{\frac{k_0+{\lambda_1}-1}{k_0}} \right\}>0.
			\end{align*}
			
			Similarly,
			\begin{equation*} \mm_{k_0+{\lambda_2}}(\ff_n)\leq c_2 \mm_{k_0}(\ff_n)^{\frac{(1-k_0-{\lambda_2})_+}{1-k_0}},\end{equation*}
			with 
			\begin{equation*}
				c_2:=\max\left\{ \rho^{\frac{{\lambda_2}}{1-k_0}}, \rho^{\frac{1-{\lambda_2}}{k_0}} \mm_{k_0+1}(\ff^{\mbox{\rm{\mbox{in}}}})^{\frac{k_0+{\lambda_2}-1}{k_0}} \right\}>0.
			\end{equation*}
		Inserting the above estimates on $\mm_{\lambda_i}$ and $\mm_{k_0+\lambda_i}$ for $i=\{1,2\}$ into the inequality~\eqref{eq:differetial_small size},  we get
			\begin{equation}
				\frac{d}{dt}\mm_{k_0}(\ff_n(t)) \leq  {c_3}\bigg[\mm_{k_0}(\ff_n)^{\frac{1-{\lambda_2} +(1-k_0-{\lambda_1})_+}{1-k_0}}+\mm_{k_0}(\ff_n)^{\frac{1-{\lambda_1} +(1-k_0-{\lambda_2})_+}{1-k_0}}\bigg], \label{eq:differential_moment}
			\end{equation}
			with 
			\begin{equation*}
				 c_3 :=\max\left\{ c_1\rho^{\frac{{\lambda_2}-k_0}{1-k_0}}, c_2\rho^{\frac{{\lambda_1}-k_0}{1-k_0}} \right\}>0. 
			\end{equation*}
			
			 Now, the estimate to be deduced from~\eqref{eq:differential_moment} obviously depends on the range of the powers of $\mm_{k_0}(\ff_n)$ in the right-hand side of~\eqref{eq:differential_moment}, which  requires to split the analysis. 
				
			\begin{enumerate}
				\item If $\lambda={\lambda_1}+{\lambda_2} \in  [2k_0,1)$, then 
				\begin{equation*}k_0+{\lambda_1}\leq k_0+{\lambda_2} \leq {\lambda_1}+{\lambda_2}<1,\end{equation*}
				which implies that 
				\begin{equation*}
					\frac{1-{\lambda_1}+(1-k_0-{\lambda_2})_+}{1-k_0}=\frac{1-k_0+1-\lambda}{1-k_0}>1
				\end{equation*}
				and
				\begin{equation*}
					\frac{1-{\lambda_2}+(1-k_0-{\lambda_1})_+}{1-k_0}=\frac{1-k_0+1-\lambda}{1-k_0}>1.
				\end{equation*}
				 In that case, the right-hand side of the differential inequality~\eqref{eq:differential_moment} is a superlinear function of $\mm_{k_0}(\ff_n)$ and integrating~\eqref{eq:differential_moment} gives
				\begin{align*}
					\mm_{k_0}(\ff_n(t)) & \le \left[ \mm_{k_0}(\ff_n^{\mbox{\rm{\mbox{in}}}})^{-\frac{1-\lambda}{1-k_0}} - \frac{2(1-\lambda)c_3}{1-k_0} t \right]^{-\frac{1-k_0}{1-\lambda}} \\
					& \le \left[ \mm_{k_0}(\ff^{\mbox{\rm{\mbox{in}}}})^{-\frac{1-\lambda}{1-k_0}} - \frac{2(1-\lambda)c_3}{1-k_0} t \right]^{-\frac{1-k_0}{1-\lambda}},
				\end{align*}	
				provided $t\in [0,T_{k_0})$ with
				\begin{equation}
					T_{k_0} := \frac{1-k_0}{2(1-\lambda) c_3} \mm_{k_0}(\ff^{\mbox{\rm{\mbox{in}}}})^{-\frac{1-\lambda}{1-k_0}} . \label{eq:Tk0}
				\end{equation}
				We have thus shown that, for $T\in (0,T_{k_0})$ and $t\in [0,T]$, 
				\begin{equation*}
					\mm_{k_0}(\ff_n(t)) \le C_1(T) := \left[ \mm_{k_0}(\ff^{\mbox{\rm{\mbox{in}}}})^{-\frac{1-\lambda}{1-k_0}} - \frac{2(1-\lambda)c_3}{1-k_0} T \right]^{-\frac{1-k_0}{1-\lambda}},
				\end{equation*}
			which completes the proof of \Cref{thm:smallsizebehavious} for $\lambda\in [2k_0,1)$.
				
				\item If $\lambda={\lambda_1}+{\lambda_2} \in [1,2]$,  then 
				\begin{equation*}
					\frac{1-{\lambda_1}}{1-k_0}+\frac{(1-k_0-{\lambda_2})_+}{1-k_0} = \left\{
					\begin{array}{lcl}
						\displaystyle{\frac{1-{\lambda_1}}{1-k_0}} \leq 1 & \text{if} & k_0+{\lambda_2}\geq 1,\\
						\displaystyle{1+\frac{1-\lambda}{1-k_0}}\leq 1 & \text{if} & k_0+{\lambda_2} < 1,
					\end{array} 
					\right.
				\end{equation*}
				and, similarly, 
				\begin{equation*}
					\frac{1-{\lambda_2}+(1-k_0-{\lambda_1})_+}{1-k_0}\leq 1.
				\end{equation*}
				 Consequently, by Young's inequality and~\eqref{eq:differential_moment},
					\begin{equation*}
						\frac{d}{dt}\mm_{k_0}(\ff_n(t)) \leq \frac{2c_3}{1-k_0} \big[ 1 + \mm_{k_0}(\ff_n(t))  \big],
					\end{equation*}
				from which we readily obtain that 
				\begin{equation*}
					\mm_{k_0}(\ff_n(t))\le \bigg[ 1 + \mm_{k_0}(\ff_n^{\mbox{\rm{\mbox{in}}}}) \bigg] e^{\frac{2c_3t}{1-k_0} } \le \bigg[ 1 + \mm_{k_0}(\ff^{\mbox{\rm{\mbox{in}}}}) \bigg] e^{\frac{2c_3t}{1-k_0} }
				\end{equation*}
				for $t\ge 0$. Introducing 
				\begin{equation*}
					C_1(T) := \bigg[ 1 + \mm_{k_0}(\ff^{\mbox{\rm{\mbox{in}}}}) \bigg] e^{\frac{2c_3T}{1-k_0} }
				\end{equation*}
				for $T>0$, we obtain the estimate stated in \Cref{thm:smallsizebehavious}.
		
			\end{enumerate}
		\end{proof}

		Next, we derive a more precise approximation for small sizes. For that purpose, we recall the following variant of the de la Vall\'ee-Poussin theorem \cite{dlvp1915} which we apply to $\ff^{\mbox{\rm{\mbox{in}}}} \in X_{k_0}$ and can be established as  \cite[Lemma~A.1]{GL12021}.
		
		\begin{prop}\label{lem:davl}
			Recalling that $k_0\in (0,1)$ and $p\in (1,1+k_0)$ are defined in~\eqref{eq:fp}	, we pick $\epsilon\in \big(-k_0,(p-k_0-1)/p\big)$. There exists a non-negative, convex, and non-increasing function $\psi_{0}\in C^{1}((0,\infty))$ such that
			\begin{equation}
				\mm_{\psi_0}(\ff^{\mbox{\rm{\mbox{in}}}}) := \int_{0}^{\infty}\psi_{0}(\xx) \lvert \ff^{\mbox{\rm{\mbox{in}}}}(\xx) \rvert d\xx  < \infty \label{eq:psi0in}
			\end{equation}
			and 
			\begin{align}\label{eq:monotonocity_psi_0}
				\lim_{x \to 0} \frac{\psi_{0}(\xx)}{{\xx}^{k_0}}=\infty, ~ ~ ~ ~  ~\lim_{x \to 0} {\xx}^{\epsilon}\psi_{0}(\xx)=0, ~ ~ ~\xx \mapsto {\xx}^{\epsilon}\psi_{0}(\xx) \text{is non-decreasing.}
			\end{align}
		\end{prop}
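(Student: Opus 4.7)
\textbf{Proof plan for \Cref{lem:davl}.} The statement is a small-size version of the de la Vall\'ee-Poussin theorem adapted to the weight $\xx^{k_0}$: from the information $\ff^{\mbox{\rm{\mbox{in}}}}\in X_{k_0}$ one wishes to extract a weight $\psi_0$ which is slightly better than $\xx^{k_0}$ near the origin while remaining integrable against $\ff^{\mbox{\rm{\mbox{in}}}}$. The plan is to deduce it from the classical de la Vall\'ee-Poussin theorem by means of a suitable change of variable.

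Since $\ff^{\mbox{\rm{\mbox{in}}}}\in X_{k_0}$, the Borel measure $d\mu(\xx):=\xx^{k_0}|\ff^{\mbox{\rm{\mbox{in}}}}(\xx)|\,d\xx$ is finite on $(0,\infty)$. I would fix an exponent $\alpha>0$ (to be tuned in terms of $\epsilon$ and $p$) and push $\mu$ forward under the diffeomorphism $\xx\mapsto s=\xx^{-\alpha}$, producing a finite measure $\nu$ on $(0,\infty)$. The classical de la Vall\'ee-Poussin theorem, in the refined form recalled in~\cite{bll2019} that additionally controls the rate of superlinear growth, then supplies a non-negative, convex, non-decreasing $\Psi\in C^1([0,\infty))$ with $\Psi(0)=0$, $\Psi(s)/s\to\infty$ as $s\to\infty$, and $\int_0^\infty \Psi(s)\,d\nu(s)<\infty$, and which moreover satisfies an upper bound $\Psi(s)\le s^{r}$ for $s$ large, with $s\mapsto\Psi(s)/s^{r}$ non-increasing for $s\ge 1$, the parameter $r>1$ being a free choice.

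I would then define $\psi_0(\xx):=\xx^{k_0}\Psi(\xx^{-\alpha})$ in a neighborhood of $0$ and extend it to a globally $C^1$, convex function with the prescribed monotonicity, by splicing with a convex tail and mollifying near the glueing point. Undoing the change of variable yields $\int_0^\infty \psi_0(\xx)|\ff^{\mbox{\rm{\mbox{in}}}}(\xx)|\,d\xx=\int\Psi\,d\nu<\infty$, which is~\eqref{eq:psi0in}. The first limit in~\eqref{eq:monotonocity_psi_0}, namely $\psi_0(\xx)/\xx^{k_0}=\Psi(\xx^{-\alpha})\to\infty$ as $\xx\to 0$, is immediate from the divergence of $\Psi$ at infinity. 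Writing $\xx^\epsilon\psi_0(\xx)=\xx^{k_0+\epsilon}\Psi(\xx^{-\alpha})$ and invoking the upper bound $\Psi(s)\le s^{r}$ for large $s$, the vanishing $\lim_{\xx\to 0}\xx^\epsilon\psi_0(\xx)=0$ is secured by choosing $\alpha$ and $r$ with $\alpha r<k_0+\epsilon$, while the monotonicity of $\xx\mapsto \xx^\epsilon\psi_0(\xx)$ translates into the monotonicity of $s\mapsto s^{-(k_0+\epsilon)/\alpha}\Psi(s)$, inherited from the imposed monotonicity of $s\mapsto\Psi(s)/s^{r}$.

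The main obstacle is the joint calibration of the parameters $\alpha$ and $r$: one must pick $r$ close to $p$ and $\alpha$ accordingly so that both $\alpha r<k_0+\epsilon$ and the monotonicity condition hold. This delicate matching is precisely the quantitative content of the stipulated range $p\in (1,1+k_0)$ and $\epsilon\in\bigl(-k_0,(p-k_0-1)/p\bigr)$. The detailed implementation, including the final smoothing and convex-envelope adjustment, follows \cite[Lemma~A.1]{GL12021} verbatim once the preceding change of variable has been performed.
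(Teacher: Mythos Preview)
The paper does not prove this proposition in the text at all: it simply states that the result ``can be established as \cite[Lemma~A.1]{GL12021}'' and moves on. Your proposal sketches the standard mechanism behind that lemma (reduce to the classical de la Vall\'ee--Poussin theorem at infinity via a power change of variable, then transfer the growth/monotonicity properties back) and ultimately defers the fine points to the very same reference, so your approach and the paper's are aligned.
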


		\begin{lem}\label{ssbrevisted}
			 For $T \in (0, T_*)$, there exists a positive constant $C_2(T)$ which is independent of $n\geq 1$ and depends solely on $\kk$, $\beta$, $\ff^{\mbox{\rm{\mbox{in}}}}$, and $T$, such that the following inequality holds for all $t\in [0,T]$,
			\begin{equation}
				\mm_{\psi_0}(\ff_n(t)):=\int_{0}^{\infty}\psi_{0}(\xx)\ff_n(t,\xx)d\xx  \leq C_2(T). \label{eq:small_size_revisited_truncated}
			\end{equation}
		\end{lem}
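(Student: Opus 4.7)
The approach mirrors that of \Cref{thm:smallsizebehavious}, with $\psi_0$ replacing the power weight $W_{k_0}(\xx)=\xx^{k_0}$. Since $\psi_0$ may be unbounded near the origin, I would first truncate, setting $\psi_{0,M}(\xx):=\min\{\psi_0(\xx),M\}$ for $M>0$, which is bounded, non-increasing, and H\"older continuous of exponent $k_0$ (the H\"older regularity follows from the monotonicity of $\xx\mapsto \xx^{\epsilon}\psi_0(\xx)$ combined with convexity); after subtracting $\psi_{0,M}(\infty)$ if needed to land in $\mathscr{T}^{k_0}$, or equivalently by multiplying the strong equation~\eqref{eq:tmain} by $\psi_{0,M}$ and using Fubini, one obtains
\begin{equation*}
\frac{d}{dt}\mm_{\psi_{0,M}}(\ff_n(t)) = \frac{1}{2}\int_{0}^{\infty}\int_{0}^{\infty}\Upsilon_{\psi_{0,M}}(\xx,\yy)\,\kk_n(\xx,\yy)\,\ff_n(t,\xx)\ff_n(t,\yy)\,d\yy\,d\xx.
\end{equation*}

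The central step is a pointwise estimate of the form $\Upsilon_{\psi_0}(\xx,\yy)\le C[\psi_0(\xx)+\psi_0(\yy)]$. Using the decomposition~\eqref{eq:masstransfer} and applying H\"older's inequality with exponents $p\in(1,1+k_0)$ and $p'=p/(p-1)$ gives
\begin{equation*}
\int_{0}^{\xx}\psi_0(\zz)\B(\zz,\xx,\yy)\,d\zz \le \Bigl(\int_{0}^{\xx}\zz^{k_0}\B(\zz,\xx,\yy)^{p}\,d\zz\Bigr)^{1/p}\!\Bigl(\int_{0}^{\xx}\zz^{-k_0 p'/p}\psi_0(\zz)^{p'}\,d\zz\Bigr)^{1/p'}.
\end{equation*}
The first factor is bounded by $E_{k_0,p}^{1/p}\xx^{(k_0+1-p)/p}$ via~\eqref{eq:nonintegrable}; the second is handled by comparing $\psi_0(\zz)$ and $\psi_0(\xx)$ for $\zz\le \xx$ through the monotonicity of $\zz\mapsto \zz^{\epsilon}\psi_0(\zz)$ granted by \Cref{lem:davl}. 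The upper bound $(p-k_0-1)/p$ on $\epsilon$ is precisely what makes the resulting integrand integrable near $\zz=0$, and since $1/p+1/p'=1$ the surviving powers of $\xx$ telescope to $0$, collapsing the whole estimate to a constant multiple of $\psi_0(\xx)$. A symmetric argument in $\yy$ yields the claimed control on $\Upsilon_{\psi_0}$ and hence on $\Upsilon_{\psi_{0,M}}$ (noting $\psi_{0,M}\le \psi_0$).

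Inserting this bound together with the splitting~\eqref{eq:kernel} of $\kk_n\le \kk$ leads, after separation of the $\xx$- and $\yy$-dependencies, to a control of $\frac{d}{dt}\mm_{\psi_{0,M}}(\ff_n(t))$ by products of the moments $\mm_{\lambda_i}(\ff_n(t))$ and $\mm_{\psi_0}(\ff_n(t))$. Since $\lambda_i\in[k_0,1]$, H\"older interpolating between $\mm_{k_0}(\ff_n(t))\le C_1(T)$ (provided by \Cref{thm:smallsizebehavious}) and $\mm_1(\ff_n(t))=\rho$ gives a uniform-in-$n$ bound on these moments on $[0,T]$, producing a linear differential inequality for $\mm_{\psi_{0,M}}(\ff_n)$ with coefficients depending only on $T$. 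Gr\"onwall's lemma, combined with the uniform initial bound $\mm_{\psi_{0,M}}(\ff^{\mbox{\rm{\mbox{in}}}})\le \mm_{\psi_0}(\ff^{\mbox{\rm{\mbox{in}}}})<\infty$ from~\eqref{eq:psi0in}, gives an upper bound independent of $n$ and $M$; a final monotone convergence as $M\to\infty$ yields~\eqref{eq:small_size_revisited_truncated} with an explicit $C_2(T)$. The main technical obstacle is the calibration of the H\"older exponents against the de la Vall\'ee-Poussin parameter $\epsilon$ so that the integral $\int_{0}^{\xx}\zz^{-k_0 p'/p}\psi_0(\zz)^{p'}\,d\zz$ both converges at the origin and contributes exactly the power of $\xx$ needed to cancel against $\xx^{(k_0+1-p)/p}$; once this matching is in place, the remaining moment interpolations and Gr\"onwall step run as in \Cref{thm:smallsizebehavious}.
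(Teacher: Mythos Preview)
Your approach is correct and essentially matches the paper's: the core is the estimate $\int_0^{\xx}\psi_0(\zz)\B(\zz,\xx,\yy)\,d\zz\le C\,\psi_0(\xx)$ obtained via H\"older's inequality combined with the monotonicity of $\zz\mapsto\zz^\epsilon\psi_0(\zz)$ (the paper pulls out $\zz^\epsilon\psi_0(\zz)\le\xx^\epsilon\psi_0(\xx)$ before applying H\"older rather than after, but the bookkeeping is equivalent), followed by a Gronwall argument. The one imprecision is in your closure step: separating $\kk$ produces the mixed moment $\int_0^\infty\psi_0(\xx)\xx^{\lambda_i}\ff_n(t,\xx)\,d\xx$ rather than $\mm_{\psi_0}(\ff_n)$ itself, and the paper deals with this by first bounding $\kk(\xx,\yy)\le 2(\xx^{k_0}+\xx)(\yy^{k_0}+\yy)$ and then splitting at $\xx=1$, using that $\psi_0$ is non-increasing to get $\int_0^\infty\psi_0(\xx)(\xx^{k_0}+\xx)\ff_n\,d\xx\le 2\mm_{\psi_0}(\ff_n)+2\psi_0(1)\rho$.
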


		\begin{proof}
			We first observe that, by Young's inequality and~\eqref{eq:kernel},
			\begin{equation}
				\kk(\xx,\yy)\leq 2({\xx}^{k_0} +\xx)({\yy}^{k_0} +\yy), \qquad (\xx,\yy)\in (0,\infty)^2. \label{eq:K_uniform_bound}
			\end{equation}
			
			Now, let $t\in [0,T]$. By~\eqref{eq:masstransfer}, \eqref{eq:Blocalc}, \eqref{eq:zeta},  \eqref{eq:K_uniform_bound}, and the non-negativity of $\psi_0$, 
			
			\begin{align}
					\frac{d}{dt}\mm_{\psi_0}(\ff_n(t)) &\leq \frac{1}{2}\int_{0}^{\infty} \int_{0}^{\infty}\Upsilon_{\psi_{0}}(\xx,\yy)\kk(\xx,\yy)\ff_n(t,\xx)\ff_n(t,\yy)d\yy d\xx \nonumber \\
					&\hspace{-1cm} \leq \int_{0}^{\infty} \int_{0}^{\infty}  \int_{0}^{\xx}\psi_{0}(\zz ) \B (\zz,\xx,\yy)({\xx}^{k_0} +\xx)({\yy}^{k_0} +\yy)  \nonumber \\
					&  \hspace{5cm} \times\ff_n(t,\xx)  \ff_n(t,\yy)d\zz d\yy d\xx \nonumber \\
					& \hspace{-0.5cm} +\int_{0}^{\infty} \int_{0}^{\infty}  \int_{0}^{\yy}\psi_{0}(\zz ) \B (\zz,\yy,\xx) ({\xx}^{k_0} +\xx) ({\yy}^{k_0} +\yy) \nonumber \\
					& \hspace{5cm} \times \ff_n(t,\xx)\ff_n(t,\yy)d\zz d\yy d\xx. \label{eq:diffrential_more_precise_small_size}
			\end{align}
			
			Next, from~\eqref{eq:nonintegrable},  \eqref{eq:monotonocity_psi_0} (with the above choice of $p$ and $\epsilon$), and H\"older's inequality, 
			\begin{align*}
					\int_{0}^{\xx}\psi_{0}(\zz ) \B (\zz,\xx,\yy)d\zz  &=\int_{0}^{\xx}{\zz}^\epsilon \psi_{0}(\zz ) {\zz}^{\big(-\epsilon-\frac{k_0}{p}\big)}{\zz}^{\frac{k_0}{p}}\B (\zz,\xx,\yy)d\zz \\
					& \leq {\xx}^\epsilon \psi_{0}(\xx) \Big(\int_{0}^{\xx}{\zz}^{\frac{(-k_0- p \epsilon)}{p-1}}d\zz \Big)^{\frac{p-1}{p}} \\
					& \hspace{2cm} \times  \Big(\int_{0}^{\xx} {\zz}^{k_0}\B (\zz,\xx,\yy)^{p}d\zz \Big)^{\frac{1}{p}}\\
					& \leq B_{k_0,p}\psi_{0}(\xx),
			\end{align*}
			where 
			\begin{equation*} 
			B_{k_0,p}:=\bigg[\bigg(\frac{p-1}{p-1-\epsilon p-k_0}\bigg)^{(p-1)}E_{k_0,p}\bigg]^{\frac{1}{p}}.
			\end{equation*}
			
			Now we deduce from the above inequality and~\eqref{eq:diffrential_more_precise_small_size} that
			\begin{equation*}
				\begin{split}
					\frac{d}{dt}\mm_{\psi_0}(\ff_n(t))
					&\leq  2 B_{k_0,p}\int_{0}^{\infty} \int_{0}^{\infty}\psi_{0}(\xx)({\yy}^{k_0} +\yy)({\xx}^{k_0}+\xx)\ff_n(t,\xx)\ff_n(t,\yy)d\yy d\xx.
				\end{split}
			\end{equation*}
			 Since
			\begin{align*} 
					\int_{0}^{\infty}\psi_{0}(\xx)({\xx}^{k_0}+\xx)\ff_n (t,\xx)d\xx &\leq 2\int_{0}^{1}\psi_0(\xx)\ff_n(t,\xx)d\xx\\
					& +  2 \psi_0(1)\int_{1}^{\infty} \xx\ff_n(t,\xx)d\xx \\
					&\leq 2\mm_{\psi_0}(\ff_n(t))+2\psi_0(1)\rho
			\end{align*}
			 by~\eqref{mass_truncated_prop} and~\Cref{lem:davl}, we further obtain, using again~\eqref{mass_truncated_prop}, along with \Cref{thm:smallsizebehavious},
			\begin{align*}
					\frac{d}{dt}\mm_{\psi_0}(\ff_n(t))
					&\leq  4  B_{k_0,p} [\mm_{\psi_0}(\ff_n(t)) + \psi_0(1)\rho] \left[ \rho + \mm_{k_0}(\ff_n(t)) \right]\\
					&\leq 2B_{k_0,p}[\mm_{\psi_0}(\ff_n(t))+\psi(1)\rho][\rho+C_1(T)].
			\end{align*}
			Integrating the above inequality with respect to time gives the desired result.
			\end{proof}

		\subsection{Uniform integrability} 
		
		 Having derived estimates on the sequence $(\ff_n)_{n\ge 1}$ for large and small sizes, we shall now discuss the uniform integrability of this sequence in a suitably chosen weighted $L^1$-space. In that direction the main tool is another variant of the de la Vall\'ee Poussin theorem \cite[Theorem~7.1.6]{bll2019} which we apply here to $\ff^{\mbox{\rm{\mbox{in}}}} \in X_{k_0}$. It guarantees that there exists a function $\psi \in C^1([0, \infty))$ satisfying the following properties: 
		$\psi$ is a convex function with $\psi(0) = \psi^{\prime}(0) = 0$ and $\psi^{\prime}$ is concave and positive on $(0, \infty)$,
		\begin{align}
			\int_{0}^{\infty}{\xx}^{k_0}\psi(\ff^{\mbox{\rm{\mbox{in}}}}(\xx))d\xx  <\infty,\\
			\lim_{s \to \infty}\psi^{\prime}(s)=\lim_{s \to \infty}\frac{\psi(s)}{s}=\infty, \label{eq:monotonocity_psi}
		\end{align}
		and, for all $r \in (1,2]$,
		\begin{align}
			\lim_{s \to \infty}\frac{\psi^{\prime}(s)}{s^{r-1}}=\lim_{s \to \infty}\frac{\psi(s)}{s^r}=0,
		\end{align}
		which also guarantees that 		
		\begin{equation}
			B_{r}:=\sup_{s\geq 0}\Big\{\frac{\psi(s)}{s^{r}}\Big\} <\infty. \label{eq:Bp}
		\end{equation}
		Also, we recall the following properties of the convex function $\psi$ (see \cite[Proposition~7.1.9~(a) \&~(b)]{bll2019}), 
		\begin{align}
			& \psi(s) \le s\psi^\prime(s) \le 2\psi(s), \qquad s \ge 0, \label{eq:psi1} \\
			& r\psi^\prime(s)\le \psi(s)+\psi(r), \qquad (r,s)\in [0,\infty)^2. \label{eq:psi2}
		\end{align}
		In particular, it follows from~\eqref{eq:psi2} and the non-negativity of $\ff_n$ and $\bb$ that
		\begin{equation}
			\psi^{\prime}(\ff_n(t,\zz))\bb (\zz,\xx,\yy)\leq \psi(\ff_n(t,\zz))+\psi(\bb (\zz,\xx,\yy)),~~ \zz \in(0,\xx+\yy), \label{eq:properties_of_psi}
		\end{equation}
		for all  $(\xx,\yy)\in (0,\infty)^2$.
		
		\begin{lem}\label{lem:uniform_int}  For $T \in (0,T_*)$, there exists a positive constant $C_3(T)$, which is independent of $n\geq 1$ and depends solely on $\kk$, $b$, $\ff^{\mbox{\rm{\mbox{in}}}}$, and $T$, such that the following inequality holds for all $t\in [0,T]$,
			\begin{equation*}
				\mm_{\xi}(\psi(\ff_n(t))) \leq C_3(T),
			\end{equation*}
			where $\xi(\xx)=\min\{\xx,{\xx}^{k_0}\}$ for $\xx\in (0,\infty)$.
		\end{lem}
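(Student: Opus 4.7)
The plan is to derive a Gronwall-type differential inequality for $\mm_\xi(\psi(\ff_n(t)))$ directly from the classical equation~\eqref{eq:tmain}. Since $\ff_n \in C^1([0,\infty), X_{k_0}\cap X_1)$ is supported in $(0,2n]$ by \Cref{prop:locwp} and $\psi \in C^1([0,\infty))$ with $\psi(0) = 0$, termwise differentiation yields
\begin{equation*}
\frac{d}{dt} \mm_\xi(\psi(\ff_n(t))) = \int_0^\infty \xi(\xx)\,\psi'(\ff_n(t,\xx))\bigl[\mathcal{F}_n\ff_n(t,\xx) - \mathcal{L}_n\ff_n(t,\xx)\bigr]\,d\xx.
\end{equation*}
As $\psi' \geq 0$ and $\kk_n \ff_n \ff_n \geq 0$, the contribution of $\mathcal{L}_n$ is non-positive and can be discarded. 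For the $\mathcal{F}_n$ contribution, Fubini--Tonelli and the standard substitution $\yy - \zz \mapsto \yy$ recast it as
\begin{equation*}
\frac{1}{2}\int_0^\infty\!\int_0^\infty \kk_n(\yy,\zz)\ff_n(t,\yy)\ff_n(t,\zz) \!\int_0^{\yy+\zz}\! \xi(\xx)\,\psi'(\ff_n(t,\xx))\,\bb(\xx,\yy,\zz)\,d\xx\, d\yy\, d\zz.
\end{equation*}

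The key step is then to invoke inequality~\eqref{eq:properties_of_psi}, namely $\bb\,\psi'(\ff_n) \leq \psi(\ff_n) + \psi(\bb)$, which splits the inner integral in $\xx$ into two pieces. The first is bounded above by $\mm_\xi(\psi(\ff_n(t)))$, so its contribution after the $(\yy,\zz)$ integration is at most $\mm_\xi(\psi(\ff_n(t)))\,\mm_{\lambda_1}(\ff_n(t))\mm_{\lambda_2}(\ff_n(t))$. For the second, combining $\xi(\xx) \leq \xx^{k_0}$, the upper bound $\psi(s) \leq B_p s^p$ from~\eqref{eq:Bp} (valid because $p \in (1,1+k_0) \subset (1,2]$), the decomposition~\eqref{eq:masstransfer}, the elementary inequality $(a+b)^p \leq 2^{p-1}(a^p + b^p)$, and the hypothesis~\eqref{eq:nonintegrable} on $\B$ delivers the pointwise bound
\begin{equation*}
\int_0^{\yy+\zz} \xi(\xx)\psi(\bb(\xx,\yy,\zz))\,d\xx \leq 2^{p-1} B_p E_{k_0,p}\bigl(\yy^{k_0+1-p} + \zz^{k_0+1-p}\bigr).
\end{equation*}
Multiplying by $\tfrac{1}{2}\kk_n(\yy,\zz)\ff_n(\yy)\ff_n(\zz)$ and integrating produces moment products of the form $\mm_{\lambda_j}(\ff_n)\mm_{k_0+1-p+\lambda_i}(\ff_n)$ with $\{i,j\}=\{1,2\}$. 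Since $p > 1$ and $\lambda_1,\lambda_2 \in [k_0,1]$, all exponents appearing lie in $[k_0, k_0+1]$, and the corresponding moments are uniformly bounded on $[0,T]$ via H\"older interpolation between $\mm_{k_0}(\ff_n)$, $\mm_1(\ff_n)$, and $\mm_{k_0+1}(\ff_n)$, which are controlled by \Cref{thm:smallsizebehavious}, \eqref{mass_truncated_prop}, and~\eqref{k_0+1_truncated_prop}, respectively.

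Combining these estimates yields a differential inequality of the form $\frac{d}{dt}\mm_\xi(\psi(\ff_n(t))) \leq A(T)\mm_\xi(\psi(\ff_n(t))) + B(T)$ on $[0,T]$, with constants depending only on $\kk$, $\bb$, $\ff^{\mbox{\rm{\mbox{in}}}}$, and $T$. The initial value $\mm_\xi(\psi(\ff_n^{\mbox{\rm{\mbox{in}}}}))$ is finite: indeed, $\ff_n^{\mbox{\rm{\mbox{in}}}} \leq \ff^{\mbox{\rm{\mbox{in}}}}$ pointwise, $\psi$ is non-decreasing, and $\xi \leq \xx^{k_0}$, so $\mm_\xi(\psi(\ff_n^{\mbox{\rm{\mbox{in}}}})) \leq \int_0^\infty \xx^{k_0}\psi(\ff^{\mbox{\rm{\mbox{in}}}}(\xx))\,d\xx$, which is finite by the de la Vall\'ee--Poussin property of $\psi$. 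Gronwall's lemma then yields the desired uniform bound $\mm_\xi(\psi(\ff_n(t))) \leq C_3(T)$ on $[0,T]$. The main obstacle I foresee is the analysis of $\psi(\bb)$: because~\eqref{eq:masstransfer} writes $\bb$ as a sum whose two indicator supports overlap on $(0,\min\{\yy,\zz\})$, linearity is lost when raising to the power $p$, and the convexity inequality above is needed to reduce the estimate to the single-variable hypothesis~\eqref{eq:nonintegrable} on $\B$. A secondary technical point is justifying the termwise differentiation; this is not serious since $\ff_n$ is a compactly supported classical solution, but if needed one may first truncate $\psi$ and pass to the limit.
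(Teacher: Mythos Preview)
Your proposal is correct and follows essentially the same route as the paper: differentiate $\mm_\xi(\psi(\ff_n))$, drop the loss term, apply the convexity inequality $\bb\,\psi'(\ff_n)\le\psi(\ff_n)+\psi(\bb)$, then control the $\psi(\bb)$ piece via $\psi(s)\le B_p s^p$ and assumption~\eqref{eq:nonintegrable}, arriving at a linear Gronwall inequality. The only minor differences are cosmetic: the paper bounds $\kk$ by $2(\xx^{k_0}+\xx)(\yy^{k_0}+\yy)$ in the $\psi(\ff_n)$ term (rather than keeping the original $\mm_{\lambda_1}\mm_{\lambda_2}$ factor as you do), and it splits the $\xi$-integral into $\xx\lessgtr 1$ instead of simply using $\xi\le \xx^{k_0}$; both choices lead to the same estimate. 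Your remark about needing $(a+b)^p\le 2^{p-1}(a^p+b^p)$ on the overlap of the two indicator supports in~\eqref{eq:masstransfer} is well taken---the paper elides this constant.
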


		\begin{proof}
			Let $t\in [0,T]$. Thanks to~\eqref{eq:kernel}, \eqref{eq:twf}, \eqref{eq:K_uniform_bound}, \eqref{eq:properties_of_psi}, and the non-negativity of both $\xi$, $\ff_n$ and $\psi^\prime$, we find
			
			\begin{align}
					\frac{d}{dt}\mm_{\xi}(\psi(\ff_n(t))) & \leq \frac{1}{2} \int_{0}^{\infty} \int_{0}^{\infty}\int_{0}^{\xx+\yy}\xi(\zz ) \psi^{\prime}(\ff_n(t,\zz ))\bb (\zz,\xx,\yy)\kk(\xx,\yy) \nonumber\\
					& \hspace{6cm} \times \ff_n(t,\xx)\ff_n(t,\yy)d\zz d\yy d\xx \nonumber \\
					&\leq \int_{0}^{\infty} \int_{0}^{\infty}\int_{0}^{\xx+\yy}\xi(\zz ) \psi(\ff_n(t,\zz))(\xx+{\xx}^{k_0})(\yy+{\yy}^{k_0})  \nonumber \\
					& \hspace{6cm}\times \ff_n(t,\xx)\ff_n(t,\yy)d\zz d\yy d\xx \nonumber \\
					& \qquad + \int_{0}^{\infty} \int_{0}^{\infty}\int_{0}^{\xx+\yy}\xi(\zz ) \psi(\bb (\zz,\xx,\yy)) {\xx}^{\lambda_1} {\yy}^{\lambda_2} \nonumber\\
					& \hspace{6cm} \times \ff_n(t,\xx)\ff_n(t,\yy)d\zz d\yy d\xx.	 \label{eq:differential_uniform_integrability}
			\end{align}
			
			We now estimate the following integral with the help of~\eqref{eq:masstransfer}, \eqref{eq:Blocalc}, \eqref{eq:nonintegrable}, \eqref{eq:fp}, and~\eqref{eq:Bp} 
			\begin{align*}
				\int_{0}^{\xx+\yy}\xi(\zz )\psi(\bb (\zz,\xx,\yy))d\zz &= \int_{0}^{\xx+\yy}\xi(\zz )\frac{\psi(\bb (\zz,\xx,\yy))}{\bb (\zz,\xx,\yy)^{p}}\bb (\zz,\xx,\yy)^{p}d\zz \\
				&\leq B_p\int_{0}^{\xx+\yy}\xi(\zz )\bb (\zz,\xx,\yy)^{p}d\zz \\
				& \leq B_p\int_{0}^{\xx}\xi(\zz )\B (\zz,\xx,\yy)^{p}d\zz \\
				&\qquad +B_p\int_{0}^{\yy}\xi(\zz )\B (\zz,\yy,\xx)^{p}d\zz .
			\end{align*}
			Now, either $\xx\in(0,1)$ and 
			\begin{align*}
				\int_{0}^{\xx}\xi(\zz )\B (\zz,\xx,\yy)^{p}d\zz &\leq\int_{0}^{\xx}\zz \B (\zz,\xx,\yy)^{p}d\zz\\
				& \leq  \int_{0}^{\xx} {\zz}^{k_0}\B (\zz,\xx,\yy)^{p}d\zz \\
				& \leq E_{k_0,p}  {\xx}^{k_0+1-p},
			\end{align*}
			or $\xx\in(1,\infty)$ and
			\begin{align*}
				\int_{0}^{\xx}\xi(\zz )\B (\zz,\xx,\yy)^{p}d\zz \leq\int_{0}^{\xx}{\zz}^{k_0}\B (\zz,\xx,\yy)^{p}d\zz \leq E_{k_0,p}{\xx}^{k_0+1-p}.
			\end{align*}
			Therefore, for all $\xx\in(0,\infty)$, we have
			\begin{equation*}
				\int_{0}^{\xx}\xi(\zz )\B (\zz,\xx,\yy)^{p}d\zz \leq E_{k_0,p}  {\xx}^{k_0+1-p}.
			\end{equation*}
			Similarily, for all $\yy\in(0,\infty)$, we have
			\begin{equation*}
				\int_{0}^{\yy}\xi(\zz )\B (\zz,\yy,\xx)^{p}d\zz \leq E_{k_0,p} {{\yy}^{k_0+1-p}}.
			\end{equation*}
			We next deduce from~\eqref{eq:fp}, \eqref{mass_truncated_prop}, \eqref{eq:differential_uniform_integrability}, \Cref{thm:smallsizebehavious}, and the above estimates that
			\begin{align*}
					\frac{d}{dt}\mm_{\xi}(\psi&(\ff_n(t))) \leq (\rho+C_1(T))^2 \mm_{\xi}(\psi(\ff_n(t)))\\
					&\quad + B_{p}E_{k_0,p}\int_{0}^{\infty} \int_{0}^{\infty} \bigg[ {\xx}^{k_0+1-p} + {\yy}^{k_0+1-p} \bigg] {\xx}^{\lambda_1} {\yy}^{\lambda_2} \ff_n(t,\xx) \ff_n(t,\yy) d\yy d\xx \\
					& \le (\rho+C_1(T))^2 \mm_{\xi}(\psi(\ff_n(t))) + B_{p}E_{k_0,p} \mm_{k_0+1-p+\lambda_1}(\ff_n(t)) \mm_{\lambda_2}(\ff_n(t))\\
					& \quad + B_{p}E_{k_0,p} \mm_{\lambda_1}(\ff_n(t)) \mm_{k_0+1-p+\lambda_2}(\ff_n(t)) .
			\end{align*}
			Since $\lambda_i\in [k_0,1]$ and $k_0+1-p+\lambda_i\in [k_0,k_0+1]$ for $i\in \{1,2\}$, we further use Young's inequality, along with~\eqref{mass_truncated_prop}, \eqref{tail_truncated_prop}, and \Cref{thm:smallsizebehavious}, to find
			\begin{align*}
				\frac{d}{dt}\mm_{\xi}(\psi(\ff_n(t))) & \leq (\rho+C_1(T))^2 \mm_{\xi}(\psi(\ff_n(t)))\\
					& \quad + 2 B_{p}E_{k_0,p} \left( \mm_{k_0}(\ff_n(t)) + \mm_{k_0+1}(\ff_n(t)) \right) \left( \mm_{k_0}(\ff_n(t)) + \mm_{1}(\ff_n(t)) \right) \\
					&\leq (\rho+C_1(T))^2 \mm_{\xi}(\psi(\ff_n(t))) + 2B_{p}E_{k_0,p} (C_1(T) + \mm_{k_0+1}(\ff^{\mbox{\rm{\mbox{in}}}})) (\rho+C_1(T)).
			\end{align*}
			Therefore, by integrating the above inequality with respect to time, we complete the proof of \Cref{lem:uniform_int}.
		\end{proof}

		\subsection{Time equicontinuity} 
		
		Finally, we  prove the time equicontinuity of the sequence $(\ff_n)_{n\ge1}$.
		
		\begin{lem}\label{lem:equicontinuity}  Let $T \in (0, T_*)$. There exists a positive constant $C_4(T)$ which is independent of $n \geq 1$ and  depends only on $\kk$, $b$, $\ff^{\mbox{\rm{\mbox{in}}}}$, and $T$, such that the following inequality holds for all $ 0\le t_1 < t_2 \le T$,
			\begin{equation}
				\int_{0}^{\infty}{\xx}^{k_0} \lvert \ff_n (t_2,\xx)-\ff_n (t_1,\xx)\rvert d\xx \leq C_{4}(T)(t_2-t_1).
			\end{equation}
		\end{lem}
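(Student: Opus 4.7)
The plan is to exploit the strong form of the regularized equation~\eqref{eq:tmain} guaranteed by \Cref{prop:locwp}. Writing
\begin{equation*}
\ff_n(t_2,\xx)-\ff_n(t_1,\xx)=\int_{t_1}^{t_2}\bigl[\mathcal{F}_n\ff_n(\tau,\xx)-\mathcal{L}_n\ff_n(\tau,\xx)\bigr]\,d\tau,
\end{equation*}
multiplying by $\xx^{k_0}$ and using the triangle inequality, the problem reduces to showing that
\begin{equation*}
\int_{0}^{\infty}\xx^{k_0}\bigl[\mathcal{F}_n\ff_n(\tau,\xx)+\mathcal{L}_n\ff_n(\tau,\xx)\bigr]\,d\xx\le C(T),\qquad \tau\in[0,T],
\end{equation*}
with $C(T)$ independent of $n\ge 1$. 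Integration over $[t_1,t_2]$ then yields the claim with $C_4(T)=C(T)$.

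For the gain term, I would reproduce the Fubini manipulation already used in the proof of \Cref{prop:locwp}: swapping the order of integration and then applying~\eqref{eq:masstransfer}, \eqref{eq:Blocalc}, and~\eqref{eq:nonintegrable} with $p=1$ (i.e., the constant $E_{k_0,1}$) gives
\begin{equation*}
\int_{0}^{\infty}\xx^{k_0}\mathcal{F}_n\ff_n(\tau,\xx)\,d\xx\le\frac{E_{k_0,1}}{2}\int_{0}^{\infty}\int_{0}^{\infty}(\xx^{k_0}+\yy^{k_0})\kk(\xx,\yy)\ff_n(\tau,\xx)\ff_n(\tau,\yy)\,d\xx\,d\yy,
\end{equation*}
since $\kk_n\le \kk$. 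For the loss term, no manipulation is needed beyond Fubini:
\begin{equation*}
\int_{0}^{\infty}\xx^{k_0}\mathcal{L}_n\ff_n(\tau,\xx)\,d\xx\le\int_{0}^{\infty}\int_{0}^{\infty}\xx^{k_0}\kk(\xx,\yy)\ff_n(\tau,\xx)\ff_n(\tau,\yy)\,d\xx\,d\yy,
\end{equation*}
and symmetrizing in $(\xx,\yy)$ brings this into the same form as the bound above. Thus everything reduces to controlling
\begin{equation*}
I_n(\tau):=\int_{0}^{\infty}\int_{0}^{\infty}(\xx^{k_0}+\yy^{k_0})\kk(\xx,\yy)\ff_n(\tau,\xx)\ff_n(\tau,\yy)\,d\xx\,d\yy.
\end{equation*}

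Using the explicit form~\eqref{eq:kernel} of $\kk$ and symmetry, $I_n(\tau)$ expands into a finite linear combination of products $\mm_{k_0+\lambda_i}(\ff_n(\tau))\mm_{\lambda_j}(\ff_n(\tau))$ with $(i,j)\in\{1,2\}^2$. Each moment appearing here has an $n$-independent bound on $[0,T]$: the moments $\mm_{\lambda_i}(\ff_n(\tau))$ and $\mm_{k_0+\lambda_i}(\ff_n(\tau))$ were already interpolated in the proof of \Cref{thm:smallsizebehavious} between $\mm_{k_0}(\ff_n(\tau))$, $\mm_1(\ff_n(\tau))$ and $\mm_{k_0+1}(\ff_n(\tau))$; and the latter three are controlled respectively by \Cref{thm:smallsizebehavious}, by~\eqref{mass_truncated_prop}, and by~\eqref{k_0+1_truncated_prop}. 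Hence $I_n(\tau)\le C(T)$ uniformly in $n$ and $\tau\in[0,T]$, which concludes the proof.

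I do not expect any real obstacle: the computation is essentially the same book-keeping as in the previous lemmas, and the only point requiring attention is to verify that every moment appearing after expanding $\kk$ lies in the range $[k_0,k_0+1]$ so that the uniform bounds of \Cref{thm:smallsizebehavious} and~\eqref{k_0+1_truncated_prop} (together with Young's inequality) apply.
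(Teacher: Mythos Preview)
Your proposal is correct and follows essentially the same approach as the paper: bound $\int_0^\infty \xx^{k_0}|\partial_t\ff_n(t,\xx)|\,d\xx$ by the $(\xx^{k_0}+\yy^{k_0})$-weighted double integral of $\kk\ff_n\ff_n$, and then control the resulting moments uniformly in $n$ via \Cref{thm:smallsizebehavious}, \eqref{mass_truncated_prop}, and \eqref{k_0+1_truncated_prop}. The only cosmetic difference is that the paper packages the gain and loss together through \Cref{lem:m} and then uses the crude kernel bound~\eqref{eq:K_uniform_bound} (leading to $\mm_{2k_0}$ and $\mm_{k_0+1}$), whereas you expand $\kk$ explicitly as in \Cref{thm:smallsizebehavious}; either route yields the same $n$-independent constant.
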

		
		\begin{proof}  For $t\in [0,T]$, we infer from \Cref{lem:m}, \Cref{thm:smallsizebehavious}, \eqref{eq:twf}, \eqref{mass_truncated_prop}, \eqref{tail_truncated_prop}, and~\eqref{eq:K_uniform_bound} that
			\begin{align*}
					\int_{0}^{\infty}{\xx}^{k_0} \lvert \partial_{t}&\ff_n(t,\xx) \rvert d\xx \\
					 &\leq\frac{1}{2}\int_{0}^{\infty} \int_{0}^{\infty}|\Upsilon_{W_{k_0}}(\xx,\yy)|\kk(\xx,\yy)\ff_n(t,\xx)\ff_n(t,\yy)d\yy d\xx \\
					&\leq \frac{E_{k_0}}{2}\int_{0}^{\infty} \int_{0}^{\infty}({\xx}^{k_0}+{\yy}^{k_0})\kk(\xx,\yy)\ff_n(t,\xx)\ff_n(t,\yy)d\yy d\xx \\
					&\leq E_{k_0}\int_{0}^{\infty} \int_{0}^{\infty}({\xx}^{k_0}+{\yy}^{k_0})(\xx+{\xx}^{k_0})(\yy+{\yy}^{k_0})\\
					& \hspace{5cm} \times \ff_n(t,\xx)\ff_n(t,\yy)d\yy d\xx \\
					&\leq 2E_{k_0}\int_{0}^{\infty} \int_{0}^{\infty}{\xx}^{k_0} (\xx+{\xx}^{k_0})(\yy+{\yy}^{k_0})\ff_n(t,\xx)\ff_n(t,\yy)d\yy d\xx \\
					&\leq 2E_{k_0}(\rho + C_1(T)) \bigg( \mm_{1+k_0}(\ff^{\mbox{\rm{\mbox{in}}}}) + \mm_{2k_0}(\ff_n(t)) \bigg).
			\end{align*}
			Since $k_0<2k_0<k_0+1$, by H\"older's inequality, we get, using once more \Cref{thm:smallsizebehavious} and~ \eqref{tail_truncated_prop},
			\begin{equation*}
				\mm_{2k_{0}}(\ff_n(t)) \le \mm_{k_0}(\ff_n(t))^{1-k_0} \mm_{1+k_0}(\ff_n(t))^{k_0} \le C_{1}(T)^{1-k_0} \mm_{1+k_0}(\ff^{\mbox{\rm{\mbox{in}}}})^{k_0}.
			\end{equation*}
			Consequently,
			\begin{equation*}
					\int_{0}^{\infty}{\xx}^{k_0} \lvert \partial_{t}\ff_n(t,\xx) \rvert d\xx \leq 2E_{k_0} (\rho + C_1(T)) \bigg( \mm_{1+k_0}(\ff^{\mbox{\rm{\mbox{in}}}}) + C_{1}(T)^{1-k_0} \mm_{1+k_0}(\ff^{\mbox{\rm{\mbox{in}}}})^{k_0} \bigg).
			\end{equation*}
			 Since 
			\begin{equation*}
				\int_0^\infty {\xx}^{k_0} | u_n(t_2,\xx) - u_n(t_1,\xx)| d\xx \le \int_{t_1}^{t_2} \int_{0}^{\infty}{\xx}^{k_0} \lvert \partial_{t}\ff_n(t,\xx) \rvert d\xx dt
			\end{equation*}
			for $0\le t_1 < t_2 \le T$, integrating the above inequality with respect to time over $(t_1,t_2)$ provides the result of \Cref{lem:equicontinuity}.
		\end{proof}
		
		The results obtained so far can now be utilized to establish \Cref{maintheorem}, which we now proceed to prove.
		
		\begin{proof}[Proof of \Cref{maintheorem} ]
			\textbf{Compactness:}  Recall that $\xi(\xx)=\min\{\xx,{\xx}^{k_0}\}$ for $\xx\geq 0$. Let $T\in(0,T_*)$ be fixed, and let us define
			\begin{equation*}
				\mathcal{E}(T):=\{\ff_n(t) : t\in [0,T], n\geq 1 \}.
			\end{equation*}
			Consider a measurable subset $A$ of $(0,\infty)$ with finite Lebesgue measure, and let $M>1$. Then, due to~\eqref{mass_truncated_prop}, \Cref{lem:uniform_int}, and the monotonicity of $s \mapsto \psi(s)/s$, we have, for any $t\in [0,T]$ and $n\geq 1$, 
			\begin{align*}
					\int_{A}\xi(\xx) \ff_n(t,\xx)d\xx  &\leq \int_{A \cap (0,M)}\xi(\xx) \ff_n(t,\xx)d\xx +\int_{M}^{\infty}\xi(\xx)\ff_n(t,\xx)d\xx \\
					& \leq \int_{A \cap (0,M)}\xi(\xx) \ff_n(t,\xx)\textbf{1}_{(0,M)}(\ff_n(t,\xx))d\xx \\
					& \quad +\int_{A \cap (0,M)}\xi(\xx) \ff_n(t,\xx)\textbf{1}_{[M,\infty)}(\ff_n(t,\xx))d\xx +\frac{\rho}{M^{1-k_0}}\\
					&\leq M^2 \lvert A \rvert +\frac{M}{\psi(M)}\int_{0}^{M}\xi(\xx)\psi(\ff_n(t,\xx))d\xx +\frac{\rho}{M^{1-k_0}}\\
					& \leq M^2\lvert A \rvert +\frac{M}{\psi(M)}C_3(T)+\frac{\rho}{M^{1-k_0}}.
			\end{align*}
			Therefore the modulus of uniform integrability 
			\begin{equation*}
				\eta\{\mathcal{E}(T);X_\xi\} := \lim_{\varepsilon \to 0} \sup\bigg\{ \int_A \xi(\xx) \g(\xx)d\xx  : \g\in \mathcal{E}(T), A\subset(0,\infty), |A|\leq \varepsilon\bigg\}
			\end{equation*}
			of $\mathcal{E}(T)$ in $X_{\xi}$ satisfies
			\begin{equation*}
				\eta\{\mathcal{E}(T);X_\xi\}\leq \frac{M}{\psi(M)}C_3(T)+\frac{\rho}{M^{1-k_0}}
			\end{equation*}
			for all $ M>1$. Recalling that $\psi$ satisfies~\eqref{eq:monotonocity_psi} and $k_0\in (0,1)$, we may let $M \to \infty$ in the previous inequality to obtain that
			\begin{equation}
				\eta\{\mathcal{E}(T);X_\xi\}=0.\label{eq:cw1}
			\end{equation}
			Also,  by~\eqref{mass_truncated_prop},
			\begin{equation*}
				\int_{M}^{\infty}\xi(\xx)\ff_n(t,\xx)d\xx \leq \frac{\rho}{M^{1-k_0}}
			\end{equation*}
			for $n\geq 1$, $t\in[0,T]$, and $M>1$, so that
			\begin{equation}
				\lim_{M \to \infty} \sup_{\g\in \mathcal{E}(T)} \Big\{\int_{M}^{\infty}\xi(\xx)\g(\xx)d\xx \Big \}=0. \label{eq:cw2}
			\end{equation}
			According to~\eqref{eq:cw1}, \eqref{eq:cw2}, and the Dunford-Pettis theorem \cite[Theorem~7.1.3]{bll2019}, $\mathcal{E}(T)$ is a relatively sequentially weakly compact subset of $X_{\xi}$. Applying a version of the Arzel\`a-Ascoli theorem \cite[Theorem~7.1.16]{bll2019} , we can deduce from \Cref{lem:equicontinuity} that $(\ff_n)_{n\geq1}$ is relatively sequentially compact in $\mathcal{C}([0,T],X_{\xi,w})$. Since $T$ is arbitrary in $(0,T_*)$, we can use a diagonal process to obtain a subsequence of $(\ff_n)_{n\geq 1}$ (not relabeled) and $\ff\in \mathcal{C}([0,T_*),X_{\xi,w})$ such that
			\begin{equation}
				\lim_{n \to \infty} \sup_{t\in[0,T]} \Big \lvert \int_{0}^{\infty}\xi(\xx) (\ff_n-\ff)(t,\xx)\tf(\xx)d\xx  \Big \rvert=0 \label{eq:convergence}
			\end{equation}
			 for all $\tf\in L^\infty((0,\infty))$ and $T\in (0,T_*)$. Thanks to~\eqref{eq:convergence}, we extend the validity of~\eqref{tail_truncated_prop} and \Cref{ssbrevisted} from $\ff_n$ to $\ff$ as shown in \cite{GL22021} and obtain that, for $T \in (0,T_*)$,
				\begin{equation}
				\int_{\xx}^{\infty}\yy^{k_0+1}\ff(t,\yy)d\yy\le \int_{\xx}^{\infty}\yy^{k_0+1}\ff^{\mbox{\rm{\mbox{in}}}}(\yy)d\yy, \qquad (t,\xx)\in [0,T]\times (0,\infty), \label{tail_limit_truncated_prop}
			\end{equation}
			and 
			\begin{equation}
				\mm_{\psi_0}(\ff(t)) \leq C_2(T), \qquad t\in [0,T]. \label{eq:small_size_revisited_limit_truncated}
			\end{equation}
			
			We now extend~\eqref{eq:convergence} to the weak topology of $X_{k_0} \cap X_{k_0+1}$, i.e.,
			\begin{equation}
				\lim_{n \to \infty} \sup_{t\in[0,T]} \Big \lvert \int_{0}^{\infty}({\xx}^{k_0}+{\xx}^{k_0+1})(\ff_n-\ff)(t,\xx)\tf(\xx)d\xx  \Big \rvert=0  \label{eq:X1mm+1convergence}
			\end{equation}
			for all $\tf  \in L^\infty((0,\infty))$ and $T\in (0,T_*)$. Indeed, let $T\in (0,T_*)$, $t\in [0,T]$, $\tf  \in L^\infty((0,\infty))$, and $M>1$. Then, 
			\begin{align*}
				&	\left \lvert \int_{0}^{\infty}({\xx}^{k_0}+{\xx}^{k_0+1})(\ff_n-\ff)(t,\xx)\tf(\xx)d\xx  \right \rvert \nonumber \\
				& \qquad \qquad \leq  \underbrace{2\lvert \lvert \tf \rvert  \rvert_{L^\infty}\int_{0}^{1/M}{\xx}^{k_0}(\ff_n+\ff)(t,\xx)d\xx}_{I_{1,n}(t)} \nonumber\\ 
				& \qquad\qquad\qquad + \underbrace{  {\left\lvert \int_{1/M}^{M}({\xx}^{k_0}+{\xx}^{k_0+1}) (\ff_n-\ff)(t,\xx) \tf(\xx)d\xx \right\rvert}}_{I_{2,n}(t)} \nonumber\\ 
				& \qquad\qquad\qquad + \underbrace{2\lvert \lvert \tf \rvert \rvert_{L^\infty}\int_{M}^{\infty}{\xx}^{k_0+1}(\ff_n+\ff)(t,\xx)d\xx}_{I_{3,n}(t)}.
			\end{align*}
			First, thanks to~\eqref{eq:small_size_revisited_truncated}, \eqref{eq:small_size_revisited_limit_truncated}, and the monotonicity of $\psi_0$,
			\begin{align}
				I_{1,n}(t)&:= 2\lvert \lvert \tf \rvert \rvert_{L^\infty}\int_{0}^{1/M}{\xx}^{k_0}(\ff_n+\ff)(t,\xx)d\xx  \nonumber \\
				&\leq \frac{2\lvert \lvert \tf \rvert \rvert_{L^\infty}}{M^{k_0}\psi_0(1/M)} \int_{0}^{1/M} \psi_{0}(\xx) (\ff_n+\ff)(t,\xx)d\xx \nonumber
				\\ 
				&\leq \frac{2\lvert \lvert \tf \rvert \rvert_{L^\infty}}{M^{k_0}\psi_0(1/M)} [\mm_{\psi_{0}}(\ff_n(t))+\mm_{\psi_{0}}(\ff (t))]\leq \frac{4\lvert \lvert \tf \rvert \rvert_{L^\infty}}{M^{k_0}\psi_0(1/M)} C_2(T). \label{eq:I1}
			\end{align}
			Next, by~\eqref{tail_truncated_prop} and~\eqref{tail_limit_truncated_prop},
			\begin{equation}
				I_{3,n}(t) :=  2\lvert \lvert \tf \rvert \rvert_{L^\infty} \int_{M}^{\infty} {\xx}^{k_0+1} (\ff_n+\ff)(t,\xx)d\xx \leq 4\lvert \lvert \tf \rvert \rvert_{L^\infty} \int_{M}^{\infty} {\xx}^{k_0+1}\ff^{\mbox{\rm{\mbox{in}}}}(\xx)d\xx. \label{eq:I3}
			\end{equation}
			Finally, observe that 
			\begin{equation*}
					\left\lvert \frac{{\xx}^{k_0}+{\xx}^{k_0+1}}{\xi(\xx)}\textbf{1}_{(1/M,M)}(\xx)\tf(\xx) \right \rvert \leq  2M \lvert  \lvert  \tf \rvert \rvert_{L^\infty},
				\end{equation*}
		which implies that 
			\begin{equation}
					\lim_{n\to\infty} \sup_{t\in [0,T]} I_{2,n}(t) = 0, \label{eq:I2}
				\end{equation}
				according to~\eqref{eq:convergence}. Gathering~\eqref{eq:I1}, \eqref{eq:I3}, and~\eqref{eq:I2},  we obtain that
			\begin{align*}
				 \limsup_{n \to \infty} \sup_{t\in[0,T]} \Big \lvert \int_{0}^{\infty}&({\xx}^{k_0}+{\xx}^{k_0+1})(\ff_n-\ff)(t,\xx)\tf(\xx)d\xx  \Big \rvert \\
				&\le 4\lvert \lvert \tf \rvert \rvert_{L^\infty} \Big[\frac{C_2(T)}{M^{k_0}\psi_0(1/M)}+ \int_{M}^{\infty} {\xx}^{k_0+1} \ff^{\mbox{\rm{\mbox{in}}}}(\xx)d\xx \Big ].
			\end{align*}
			As $\ff^{\mbox{\rm{\mbox{in}}}}$ satisfies~\eqref{eq:additionalinitiadata} and $\psi_0$ satisfies~\eqref{eq:monotonocity_psi_0}, the right-hand side of the above inequality tends to zero as $M \to \infty$, thereby concluding the proof of~\eqref{eq:X1mm+1convergence}.
			
			\medskip
			
			\textbf{Mass conservation:}  In particular, the convergence established in~\eqref{eq:X1mm+1convergence} implies that $\ff \in \mathcal{C}([0,T_* ),X_{1,w})$ and a straightforward consequence of~\eqref{mass_truncated_prop} is that
				\begin{equation}
					\mm_1(\ff (t))=\lim_{n \to \infty} \mm_1(\ff_n(t))=\lim_{n \to \infty} \mm_1(\ff_n^{\mbox{\rm{\mbox{in}}}})= \mm_1(\ff^{\mbox{\rm{\mbox{in}}}}(t))=\rho,\qquad t\in [0,T_*). \label{eq:limitmass}
			\end{equation}
			
			\medskip
			
			 \textbf{Regularity and integrability properties:} According to~\eqref{eq:X1mm+1convergence} and~\eqref{eq:limitmass}, $u$ belongs to $C([0,T_*),X_{k_0,w})$ and to $L^\infty((0,T_*),X_1)$ and thus satisfies~\eqref{eq:ws1}. In addition, it follows from~\eqref{eq:kernel}, \eqref{eq:K_uniform_bound}, and~\eqref{tail_limit_truncated_prop} that $\ff$ satisfies~\eqref{eq:ws2}.
			
			\medskip
			
			\textbf{Limit Equation:} Now, we show that the limit function $u$ satisfies the weak formulation~\eqref{eq:wf}. Let $t\in (0,T_*)$ and $\tf \in \mathscr{T}^{k_0}$. On the one hand, it follows from~\eqref{eq:tin} and~\eqref{eq:X1mm+1convergence}  that
			\begin{equation*}
				\lim_{n \to \infty}\int_{0}^{\infty}\tf(\xx)(\ff_n(t,\xx)-\ff_n^{\mbox{\rm{\mbox{in}}}}(\xx))d\xx = \int_{0}^{\infty}\tf(\xx)(\ff (t,\xx)-\ff^{\mbox{\rm{\mbox{in}}}}(\xx))d\xx .
			\end{equation*}
			On the other hand, we observe that
			\begin{align*}
				\lim_{n\to \infty} \frac{\Upsilon_{\tf}(\xx,\yy)\kk_n(\xx,\yy)}{({\xx}^{k_0}+{\xx}^{k_0+1})({\yy}^{k_0}+{\yy}^{k_0+1})}=\frac{\Upsilon_{\tf}(\xx,\yy)\kk(\xx,\yy)}{({\xx}^{k_0}+{\xx}^{k_0+1})({\yy}^{k_0}+{\yy}^{k_0+1})}, 
			\end{align*}
			for $(\xx,\yy)\in (0,\infty)^2$ and 
			\begin{align*}
				\frac{({\xx}^{k_0}+{\yy}^{k_0})(\xx+{\xx}^{k_0})(\yy+{\yy}^{k_0})}{({\xx}^{k_0}+{\xx}^{k_0+1})({\yy}^{k_0}+{\yy}^{k_0+1})} \le 8 \quad \text{for} \quad (\xx,\yy)\in (0,\infty)^2,
			\end{align*}
			which implies that, by \Cref{lem:m} and~\eqref{eq:K_uniform_bound},
			\begin{align*}
				\left\lvert \frac{\Upsilon_{\tf}(\xx,\yy) \kk_n(\xx,\yy)}{({\xx}^{k_0}+{\xx}^{k_0+1}) ({\yy}^{k_0}+{\yy}^{k_0+1})} \right\rvert &\leq 2E_{k_0}\lvert \lvert \tf \rvert \rvert_{C^{0,k_0}}\frac{({\xx}^{k_0}+{\yy}^{k_0})(x+{\xx}^{k_0})(\yy+{\yy}^{k_0})}{({\xx}^{k_0}+{\xx}^{k_0+1})({\yy}^{k_0}+{\yy}^{k_0+1})}\\
				&\leq 16E_{k_0}\lvert \lvert \tf \rvert \rvert_{C^{0,k_0}}.
			\end{align*}
			Since the convergence obtained in~\eqref{eq:X1mm+1convergence} implies that
			\begin{equation*}
				[(\tau,\xx,\yy) \mapsto \ff_n(\tau,\xx)\ff_n(\tau,\yy)] \rightharpoonup  [(\tau,\xx,\yy) \mapsto  {\ff(\tau,\xx)\ff(\tau,\yy)}]
			\end{equation*}
			in $L^1((0, t) \times (0, \infty)^2,({\xx}^{k_0}+{\xx}^{k_0+1})({\yy}^{k_0}+{\yy}^{k_0+1})d\yy d\xx d\tau)$, we infer from \cite[Proposition~2.61]{FL2007} that
			\begin{align*}
				\lim_{n\to \infty} \frac{1}{2} \int_{0}^{t} \int_{0}^{\infty} \int_{0}^{\infty} & \Upsilon_{\tf}(\xx,\yy) \kk_n(\xx,\yy) \ff_n(\tau,\xx) \ff_n(\tau,\yy) d\yy d\xx d\tau \\
				&=\frac{1}{2} \int_{0}^{t} \int_{0}^{\infty} \int_{0}^{\infty} \Upsilon_{\tf}(\xx,\yy) \kk(\xx,\yy) \ff (\tau,\xx) \ff (\tau,\yy) d\yy d\xx d\tau.
			\end{align*}
			
			 We have thus verified that $\ff$ satisfies all the conditions for being a weak solution to~\eqref{eq:main}-\eqref{eq:in} on $[0, T_*)$, and it is also mass-conserving on $[0,T_*)$, according to~\eqref{eq:limitmass}.
		\end{proof}

		\section{Uniqueness}\label{sec:u}
		
		\begin{proof}[Proof of \Cref{thm:uniqueness} ]
			
			Let $\ff_1$ and $\ff_2$ be two weak solutions to~\eqref{eq:main}-\eqref{eq:in} in the sense of \Cref{defn:weaksolution} on $[0,T_1)$ and $[0,T_2)$, respectively, both satisfying \eqref{eq:moment1+k0+lambda2} for each $t \in  (0, \min\{ T_1 , T_2 \} )$. We set $e_1:= \ff_1 - \ff_2$, $e_2:=\ff_1+\ff_2$, $\Sigma:= sign(\ff_1 -\ff_2)$, $w(\xx) := \max\{{\xx}^{k_0} , {\xx}^{1+k_0}\}$, $\xx>0$,
			and we infer from~\eqref{eq:wf} that, for $t \in  (0, \min\{ T_1 , T_2 \} )$,
			\begin{equation}\label{eq:uniqueness}
				\frac{d}{dt}\int_{0}^{\infty}w(\xx)|e_1(t,\xx)|\leq \frac{1}{2}\int_{0}^{\infty}\int_{0}^{\infty}Q(t,\xx,\yy)e_2(t,\xx) \lvert e_1(t,\yy)\rvert d\yy d\xx ,
			\end{equation}
			where
			\begin{align*}
				Q(t,\xx,\yy): &=\kk(\xx,\yy)\Upsilon_{w\Sigma(t)}(\xx,\yy)\Sigma(t,\yy)\\
				&\leq \kk(\xx,\yy)\bigg[\int_{0}^{\xx+\yy}w(\zz)\bb (\zz,\xx,\yy)d\zz +w(\xx)-w(\yy)\bigg].
			\end{align*}
			We now estimate $Q(t,\xx,\yy)$ with the help of~\eqref{eq:masstransfer}, \eqref{eq:nonintegrable}, and~\eqref{eq:kernel} and split the analysis according to the range of $(\xx,\yy)$.
		
				\textit{Case 1.} If $(\xx,\yy)\in(0,1)^2$, then
				\begin{align*}
						Q(t,\xx,\yy)&\leq \kk(\xx,\yy)\bigg[\int_{0}^{\xx+\yy}{\zz}^{k_0} \bb (\zz,\xx,\yy)d\zz +{\xx}^{k_0}-{\yy}^{k_0}\bigg]\\
						&  \le 2 (\xx\yy)^{k_0} \big[ (E_{k_0,1}+1) \xx^{k_0} + (E_{k_0,1}-1) \yy^{k_0} \big] \\
						&\leq 4E_{k_0,1}{\xx}^{k_0} w(\yy),
				\end{align*}
				 recalling that $E_{k_0,1}\ge 1$.
				
				\textit{Case 2.} If $(\xx,\yy)\in(0,1)\times (1,\infty)$, then 
				\begin{align*}
						Q(t,\xx,\yy)&\leq \kk(\xx,\yy)\bigg[\int_{0}^{\xx}{\zz}^{k_0} \bb_* (\zz,\xx,\yy)d\zz + {\xx}^{k_0} \bigg] \\
						& \hspace{-0.5cm} + \kk(\xx,\yy) \bigg[ \int_{0}^1 \zz^{k_0} \bb_*(\zz,\yy,\xx) d\zz + \int_{1}^{\yy}{\zz}^{1+k_0} \bb_* (\zz,\xx,\yy)d\zz -{\yy}^{k_0+1}\bigg]\\
						& \hspace{-1cm}  \le \kk(\xx,\yy) \bigg[ (E_{k_0,1}+1) \xx^{k_0} + E_{k_0,1} \yy^{k_0} + \yy^{k_0} \int_1^{\yy} \zz \beta_*(\zz,\yy,\xx) d\zz - \yy^{k_0+1} \bigg] \\
						& \hspace{-1cm} \le 3 E_{k_0,1} \xx^{k_0} \left( \yy^{\lambda_1} + \yy^{\lambda_2} \right) \yy^{k_0} \\
						& \hspace{-1cm} \leq 6E_{k_0,1}{\xx}^{k_0} w(\yy).
				\end{align*}

			\textit{Case 3.} If $(\xx,\yy)\in(1,\infty)\times (0,1)$, then 
				\begin{align*}
						Q(t,\xx,\yy)&\leq \kk(\xx,\yy)\bigg[\int_{0}^{\yy}{\zz}^{k_0} \bb_* (\zz,\yy,\xx)d\zz - {\yy}^{k_0} \bigg] \\
						& \hspace{-0.5cm} + \kk(\xx,\yy) \bigg[ \int_{0}^1 \zz^{k_0} \bb_*(\zz,\xx,\yy) d\zz + \int_{1}^{\xx}{\zz}^{1+k_0} \bb_* (\zz,\yy,\xx)d\zz + {\xx}^{k_0+1}\bigg]\\
						& \hspace{-1cm}  \le \kk(\xx,\yy) \bigg[ (E_{k_0,1}-1) \yy^{k_0} + E_{k_0,1} \xx^{k_0} + \xx^{k_0} \int_1^{\xx} \zz \beta_*(\zz,\xx,\yy) d\zz + \xx^{k_0+1} \bigg] \\
						& \hspace{-1cm} \le \yy^{k_0} \left( \xx^{\lambda_1} + \xx^{\lambda_2} \right) \big[ E_{k_0,1}-1 + E_{k_0,1} + 2 \big] \xx^{k_0+1} \\
						& \hspace{-1cm} \le 6 E_{k_0,1}{\xx}^{1+k_0+{\lambda_2}} w(\yy).
				\end{align*}
				
				\textit{Case 4.} If $(\xx,\yy)\in(1,\infty)^2$, then 
				\begin{align*}
						Q(t,\xx,\yy)&\leq \kk(\xx,\yy) \bigg[ E_{k_0,1} \xx^{k_0} + \xx^{k_0} \int_{1}^{\xx} {\zz} \bb_* (\zz,\xx,\yy)d\zz + {\xx}^{k_0+1} \bigg] \\
						& \hspace{-0.5cm} + \kk(\xx,\yy) \bigg[ E_{k_0,1} \yy^{k_0} + \yy^{k_0} \int_{1}^{\yy} {\zz} \bb_* (\zz,\yy,\xx)d\zz - {\yy}^{k_0+1} \bigg] \\
						& \hspace{-1cm} \le 2 (\xx\yy)^{\lambda_2} \big[ (E_{k_0,1}+2) \xx^{k_0+1} + E_{k_0,1} \yy^{k_0} \big] \\
						& \hspace{-1cm} \le 8 E_{k_0,1} (\xx\yy)^{\lambda_2}  \xx^{k_0+1} \yy^{k_0} \\
						& \hspace{-1cm} \leq 8 E_{k_0,1}{\xx}^{1+k_0+{\lambda_2}} w(\yy).
				\end{align*}
		
			Using these bounds in~\eqref{eq:uniqueness}, we get the following differential inequality
			\begin{equation*}
				\begin{split}
					\frac{d}{dt}\int_{0}^{\infty}w(\xx)&|e_1(t,\xx)|d\xx\\
					&\leq 12 E_{k_0,1} \int_{0}^{\infty} \int_{0}^{\infty} ({\xx}^{k_0}+{\xx}^{1+k_0+{\lambda_2}})w(\yy)e_2(t,\xx)|e_1(t,\yy)|d\yy d\xx \\
					&=12E_{k_0,1}[\mm_{k_0}(e_2(t))+\mm_{1+k_0+{\lambda_2}}(e_2(t))]\int_{0}^{\infty}w(\yy)|e_1(t,\yy)|d\yy. 
				\end{split}
			\end{equation*}
			
			The proof can be completed by applying Gronwall's lemma, since both $\mm_{k_0}(e_2)$ and $\mm_{1+k_0+{\lambda_2}}(e_2)$ are in $L^1(0,t)$, and $\ff_1(0) = \ff_2(0) = \ff^{\mbox{\rm{\mbox{in}}}}$.
		\end{proof}
		
\section{Non-existence}\label{sec:ne}

		 In this section, $\beta$ and $\kk$ are given by~\eqref{eq:masstransfer}, \eqref{eq:powerlaw}, and~\eqref{eq:kernel} with the parameters $(\lambda_1,\lambda_2,\nu)$ satisfying
		\begin{equation}
			\begin{split}
				\nu\in (-2,-1], \quad \lambda_1 \le \lambda_2 \le 1, \quad \lambda_1 < |\nu|-1, \quad \lambda = \lambda_1+\lambda_2 < 1.
			\end{split}\label{eq:ane}
		\end{equation}
		We fix $k_0\in (|\nu|-1,1)$ and consider a mass-conserving weak solution $\ff$ to~\eqref{eq:main}-\eqref{eq:in} in the sense of~\Cref{defn:weaksolution} on $[0,T_1)$ for some $T_1>0$ with initial condition $\ff^{\mbox{\rm{\mbox{in}}}}\in X_{k_0,+}\cap X_{k_0+1}$ and $\rho = \mm_1(\ff^{\mbox{\rm{\mbox{in}}}})>0$. Then
		\begin{equation}
			\mm_1(\ff (t)) = \rho =\mm_1(\ff^{\mbox{\rm{\mbox{in}}}})\,, \qquad t\in [0,T_1)\,, \label{eq:maasconservation_non}
		\end{equation}
		and, by~\Cref{lem:mtail}, 
		\begin{equation}
			\mm_{k_0+1}(\ff (t))\leq  \mm_{k_0+1}(\ff^{\mbox{\rm{\mbox{in}}}})\,, \qquad t\in [0,T_1)\,. \label{eq:moment_non}
		\end{equation}
		As already mentioned, the proof of \Cref{thm:nonexistence} is adapted from \cite{CadC92, vanD87c} and the first step is to show that all moments of $\ff$ of negative order, as well as sublinear moments, have to be finite.
		
		\begin{lem}\label{lem:negativemoments}
			For any $T\in (0,T_1)$  and $k\in (-\infty,1)$,
			\begin{equation*}
				\sup_{t\in [0,T]} \mm_k(\ff (t)) < \infty\,.
			\end{equation*}
			Particularly, $\ff^{\mbox{\rm{\mbox{in}}}}\in X_k$ for all $k\in (-\infty,1+k_0]$.
		\end{lem}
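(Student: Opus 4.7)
The strategy is a downward bootstrap on $k$, starting from the a priori bounds on $\mm_{k_0}$ and $\mm_{k_0+1}$ and iteratively extending to smaller indices via the weak formulation.

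As a first step, mass conservation \eqref{eq:maasconservation_non} together with \eqref{eq:moment_non} yields $\mm_1(u(t))=\rho$ and $\mm_{k_0+1}(u(t))\le \mm_{k_0+1}(u^{\mathrm{in}})$ for $t\in[0,T_1)$. Combined with $\sup_{[0,T]}\mm_{k_0}(u(t))<\infty$, which follows from $u\in\mathcal C([0,T_1),X_{k_0,w})$ and the uniform boundedness principle on the compact interval $[0,T]$, Hölder's inequality yields uniform bounds on $\mm_k(u(t))$ for every $k\in[k_0,k_0+1]$, covering in particular the range $k\in[k_0,1)$.

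For $k\in(|\nu|-1,k_0)$ I would use the Hölder-regularized test function $\varsigma_\epsilon(x):=\epsilon^{k-k_0}x^{k_0}$ for $x\le\epsilon$ and $\varsigma_\epsilon(x):=x^k$ for $x>\epsilon$, which lies in $\mathscr T^{k_0}$ for every $\epsilon>0$. Plugging the explicit form \eqref{eq:powerlaw} of $\beta_*$ and using the algebraic identity $(1-k)(k_0+\nu+1)-(\nu+2)(k_0-k)=(1-k_0)(k+\nu+1)$, a direct computation verifies that $\Upsilon_{\varsigma_\epsilon}\ge 0$ and that $\Upsilon_{\varsigma_\epsilon}(x_1,x_2)\nearrow \frac{1-k}{k+\nu+1}(x_1^k+x_2^k)$ monotonically as $\epsilon\downarrow 0$. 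Passing to the monotone limit in the weak formulation then produces the moment identity
\begin{equation*}
\mm_k(u(t))-\mm_k(u^{\mathrm{in}})=\frac{1-k}{k+\nu+1}\int_0^t\bigl(\mm_{k+\lambda_1}\mm_{\lambda_2}+\mm_{k+\lambda_2}\mm_{\lambda_1}\bigr)(\tau)\,d\tau.
\end{equation*}
Combining this identity with the $L^1(0,T)$-bounds on the products $\mm_{k_0+\lambda_i}\mm_{\lambda_j}$, which follow directly from the defining integrability condition \eqref{eq:ws2}, and iterating downward in steps of size $\min(\lambda_1^+,\lambda_2^+)$, a finite induction covers $(|\nu|-1,1)$. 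Sending $t\to 0^+$ then yields $\mm_k(u^{\mathrm{in}})<\infty$ in the same range, which together with $u^{\mathrm{in}}\in X_{k_0+1}$ gives $u^{\mathrm{in}}\in X_k$ for $k\in(|\nu|-1,1+k_0]$.

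The main obstacle is to extend this conclusion below the singular threshold $k=|\nu|-1$. In this range the fragmentation integral $\int_0^x z^k\beta_*(z,x_1,x_2)\,dz$ diverges, the coefficient $(1-k)/(k+\nu+1)$ changes sign, and the monotone-convergence argument of the previous paragraph breaks down. My plan is to let $k\downarrow |\nu|-1$ in the moment identity and balance the blow-up of the prefactor against the $L^1$-bounds coming from \eqref{eq:ws2} in order to transfer the finiteness already established for $k>|\nu|-1$ downward. This is the step where the genuine non-integrability of the fragment daughter distribution enters the analysis and is where I expect the bulk of the technical effort.
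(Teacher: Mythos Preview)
The paper offers no proof of its own here; it simply refers to \cite[Lemma~7.1]{GL22021}.  So the question is only whether your plan stands on its own, and it does not.

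There is a circularity in your bootstrap for $k\in(|\nu|-1,k_0)$.  The moment identity at level $k$ reads
\[
\mm_k(u(t))-\mm_k(u^{\mathrm{in}})=\frac{1-k}{k+\nu+1}\int_0^t\bigl(\mm_{k+\lambda_1}\mm_{\lambda_2}+\mm_{k+\lambda_2}\mm_{\lambda_1}\bigr)(\tau)\,d\tau,
\]
and the right-hand side is \emph{not} the combination $\mm_{k_0+\lambda_i}\mm_{\lambda_j}$ supplied by~\eqref{eq:ws2} except when $k=k_0$.  For $k<k_0$ you need $\mm_{\lambda_1}$, $\mm_{\lambda_2}$, and $\mm_{k+\lambda_1}$ separately; but in the non-existence setting one has $\lambda_1<|\nu|-1$, so $\lambda_1$ may well be negative and in any case $\lambda_1<k_0$, so $\mm_{\lambda_1}$ is a moment at an index \emph{strictly below} the one you are trying to reach.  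Likewise, if $\lambda_1<0$ then $k+\lambda_1<k$ and the ``downward'' step requires as input a moment you have not yet controlled.  Your proposed step size $\min(\lambda_1^+,\lambda_2^+)$ can be zero.  The bound from~\eqref{eq:ws2} gives you a single $L^1$ estimate on the \emph{products} $\mm_{k_0+\lambda_i}\mm_{\lambda_j}$, and the whole point of the argument in \cite{GL22021} is to unwind those products via lower bounds on the ``known'' factors (coming from H\"older between $\mm_1$ and $\mm_{k_0+1}$) before one can iterate; you have skipped that step entirely.

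The region $k\le|\nu|-1$ is a genuine gap, not merely technical.  Your plan there---``let $k\downarrow|\nu|-1$ in the moment identity and balance the blow-up of the prefactor against the $L^1$-bounds''---cannot work as stated: letting $k$ decrease towards $|\nu|-1$ from above tells you about the limit point, not about any $k<|\nu|-1$.  Below the threshold the integral $\int_0^{x_1}z^k\beta_*(z,x_1,x_2)\,dz$ diverges, so the formal moment identity no longer exists; the truncated test function $\varsigma_\epsilon$ still makes sense, but $\Upsilon_{\varsigma_\epsilon}\to+\infty$ as $\epsilon\downarrow0$ and monotone convergence produces an identity of the form $\infty=\infty$.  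The argument in \cite{GL22021} instead keeps $\epsilon>0$, bounds $\Upsilon_{\varsigma_\epsilon}$ by a self-referential expression $C(\varsigma_\epsilon(x_1)+\varsigma_\epsilon(x_2))$ together with terms that are already controlled, runs Gronwall \emph{uniformly in $\epsilon$}, and only then lets $\epsilon\to0$; this closes for any $k$ without ever invoking the divergent coefficient $(1-k)/(k+\nu+1)$.  That mechanism is absent from your outline.
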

		
		\begin{proof}
			The proof is the same as that of \cite[Lemma~7.1]{GL22021} to which we refer.
		\end{proof}
		
		Now, we are going to prove \Cref{thm:nonexistence}.

		\begin{proof}[Proof of \Cref{thm:nonexistence} ]
			Consider $k\in (|\nu|-1,k_0)$, $\theta\in (0,1)$, and  define the function
			\begin{equation*}			
				\tf_{k,\theta}(\xx) = (\xx+\theta)^k - \theta^k ~\text{for} ~ \xx\in (0,1/\theta) ~\text{ and} ~ \tf_{k,\theta}(\xx)=\tf_{k,\theta}(1/\theta) ~ \text{otherwise}.	
			\end{equation*}
			 Clearly, $\tf_{k,\theta}\in L^\infty((0,\infty))$ and straightforward computations show that
			\begin{equation*}
				|\tf_{k,\theta}(\xx) - \tf_{k,\theta}(\yy)| \le \theta^{k-k_0} |\xx-\yy|^{k_0}, \qquad (\xx,\yy)\in (0,\infty)^2. 
			\end{equation*}
			 Consequently, $\tf_{k,\theta}\in \mathscr{T}^{k_0}$ and it follows from~\eqref{eq:wf} that, for $t\in (0,T_1)$,
			\begin{equation}
				\begin{split}
					& \int_0^\infty \tf_{k,\theta}(\xx) \ff(t,\xx)d\xx = \int_0^\infty \tf_{k,\theta}(\xx) \ff^{\mbox{\rm{\mbox{in}}}}(\xx)d\xx \\
					& \qquad\qquad + \frac{1}{2} \int_0^t \int_0^\infty \int_0^\infty \Upsilon_{\tf_{k,\theta}}(\xx,\yy) \kk(\xx,\yy) \ff(\tau,\xx) \ff(\tau,\yy)d\yy d\xx d\tau.
				\end{split} \label{eq:ne1}
			\end{equation}
			On the one hand, since 
			\begin{equation*}
				\tf_{k,\theta}(\xx) \le \xx^k \;\;\text{ and }\;\; \lim_{\theta\to 0} \tf_{k,\theta}(\xx) = \xx^k, \quad \xx\in (0,\infty),
			\end{equation*}
			we infer from \Cref{lem:negativemoments} and the Lebesgue dominated convergence theorem that 
			\begin{equation}
				\begin{split}
					\lim_{\theta\to 0} \int_0^\infty \tf_{k,\theta}(\xx) \ff(t,\xx)d\xx & = \mm_k(\ff(t)), \\
					\lim_{\theta\to 0} \int_0^\infty \tf_{k,\theta}(\xx) \ff^{\mbox{\rm{\mbox{in}}}}(\xx)d\xx & = \mm_k(\ff^{\mbox{\rm{\mbox{in}}}}).
				\end{split} \label{eq:ne2}
			\end{equation}
			On the other hand, since $\nu+k+1>0$,
			\begin{align*}
				\Upsilon_{\tf_{k,\theta}}(\xx,\yy) & = \frac{\nu+2}{\xx^{\nu+1}} \int_{0}^{\xx} \tf_{k,\theta}(\zz) \zz^\nu d\zz + \frac{\nu+2}{\yy^{\nu+1}} \int_{0}^{\yy} \tf_{k,\theta}(\zz) \zz^\nu d\zz \\
				& \qquad - \tf_{k,\theta}(\xx) - \tf_{k,\theta}(\yy)
			\end{align*}
			satisfies
			\begin{equation*}
				\big| \Upsilon_{\tf_{k,\theta}}(\xx,\yy) \big| \le \left( 1 + \frac{\nu+2}{k+\nu+1} \right) \big( \xx^k + \yy^k \big), \qquad (\xx,\yy)\in (0,\infty)^2, 
			\end{equation*}
			so that
			\begin{align*}
				\big| \Upsilon_{\tf_{k,\theta}}(\xx,\yy) \big| \kk(\xx,\yy) & \le \left( 1 + \frac{\nu+2}{k+\nu+1} \right) \big( \xx^{k+\lambda_1} \yy^{\lambda_2} + \xx^{k+\lambda_2} \yy^{\lambda_1} \big)\\
				& \qquad + \left( 1 + \frac{\nu+2}{k+\nu+1} \right) \big( \xx^{\lambda_1} \yy^{k+\lambda_2} + \xx^{\lambda_2} \yy^{k+\lambda_1} \big)
			\end{align*}
			for $(\xx,\yy)\in (0,\infty)^2$. Since $\ff\in L^\infty((0,t),X_m)$ for $m\in\{\lambda_1,\lambda_2,k+\lambda_1,k+\lambda_2\}$ by~\eqref{eq:ane}, \eqref{eq:moment_non}, and~\Cref{lem:negativemoments}, and
			\begin{equation*}
				\lim_{\theta\to 0} \Upsilon_{\tf_{k,\theta}}(\xx,\yy) = \frac{1-k}{k+\nu+1} \big( \xx^k + \yy^k \big), \qquad (\xx,\yy)\in (0,\infty)^2,
			\end{equation*}
			we use once more the Lebesgue dominated convergence theorem to conclude that
			\begin{equation}
				\begin{split}
					& \lim_{\theta\to 0} \int_0^t \int_0^\infty \int_0^\infty \Upsilon_{\tf_{k,\theta}}(\xx,\yy) \kk(\xx,\yy) \ff(\tau,\xx) \ff(\tau,\yy)d\yy d\xx d\tau \\
					& \qquad  = \frac{1-k}{k+\nu+1} \int_0^t \int_0^\infty \int_0^\infty \big( \xx^k + \yy^k \big) \kk(\xx,\yy) \ff(\tau,\xx) \ff(\tau,\yy)d\yy d\xx d\tau.
				\end{split}\label{eq:ne3}
			\end{equation}
			Collecting~\eqref{eq:ne1}, \eqref{eq:ne2}, and~\eqref{eq:ne3}, we have shown that,  for $t\in (0,T_1)$,
			\begin{equation*}
				\begin{split}
					&  \mm_k(\ff(t)) = \mm_k(\ff^{\mbox{\rm{\mbox{in}}}}) \\
					& \qquad\qquad + \frac{1-k}{2(k+\nu+1)} \int_0^t \int_0^\infty \int_0^\infty \big( \xx^k + \yy^k \big) \kk(\xx,\yy) \ff(\tau,\xx) \ff(\tau,\yy)d\yy d\xx d\tau.
				\end{split} 
			\end{equation*}
			Hence, after using the symmetry of $\kk$ and~\eqref{eq:kernel},
			\begin{equation}
				\mm_k(\ff (t)) \ge \mm_k(\ff^{\mbox{\rm{\mbox{in}}}}) + \frac{1-k}{k+\nu+1} \int_0^t  \mm_{k+{\lambda_2}}(\ff (\tau)) \mm_{\lambda_1}(\ff (\tau))\ d\tau\,, \qquad t\in [0,T_1)\,. \label{eq: momentequation}
			\end{equation}
			Since ${\lambda_1} < |\nu|-1 < k < 1$, we can use H\"older's inequality and~\eqref{eq:maasconservation_non} to obtain, for $\tau\in [0,t)$,
			\begin{align*}
				\mm_k(\ff (\tau)) &\le \mm_{1}(\ff (\tau))^{(k-{\lambda_1})/(1-{\lambda_1})} \mm_{\lambda_1}(\ff (\tau))^{(1-k)/(1-{\lambda_1})}\\
				& \le  \rho^{(k-{\lambda_1})/(1-{\lambda_1})} \mm_{\lambda_1}(\ff (\tau))^{(1-k)/(1-{\lambda_1})}. 
			\end{align*} 
			Next, either $k<1<k+{\lambda_2}$ and we have
			\begin{equation*}
				\rho=\mm_1(\ff (\tau)) \le \mm_k(\ff (\tau))^{(k+{\lambda_2}-1)/{\lambda_2}} \mm_{k+{\lambda_2}}(\ff (\tau))^{(1-k)/{\lambda_2}}.
			\end{equation*}
			Or  $k+{\lambda_2} < 1 < 1+k_0$ and we have
			\begin{align*}
				\rho=\mm_1(\ff (\tau)) &\le \mm_{1+{k_0}}(\ff (\tau))^{(1-k-{\lambda_2})/(1+k_0-k-\lambda_2)} \mm_{k+{\lambda_2}}(\ff (\tau))^{k_0/(1+k_0-k-\lambda_2)}\\
				& \le \mm_{1+k_0}(\ff^{\mbox{\rm{\mbox{in}}}})^{(1-k-{\lambda_2})/(1+k_0-k-\lambda_2)} \mm_{k+{\lambda_2}}(\ff (\tau))^{k_0/(1+k_0-k-\lambda_2)}.
			\end{align*}
			Consequently,
			\begin{equation}
				\mm_{k+{\lambda_2}}(\ff (\tau)) \mm_{\lambda_1}(\ff (\tau)) \ge  \ell_2(k)  \mm_k(\ff (\tau))^{(1-k- {\ell_1(k))}/(1-k)}\,, \qquad \tau\in [0,t) \label{eq: k+beta alpha}
			\end{equation}
			with $ {\ell_1(k)} := \lambda_1-k+(k+\lambda_2-1)_+<0$ and
			\begin{equation*}
				 {\ell_2(k)} := \rho^{(\lambda_1-k)/(1-k)}\min\{\rho^{\lambda_2/(1-k)}, \rho^{(1+k_0-k-{\lambda_2})/k_0} \mm_{1+k_0}(\ff^{\mbox{\rm{\mbox{in}}}})^{(k+{\lambda_2}-1)/k_0}\} >0\,.
			\end{equation*}
			
			Combining~\eqref{eq: momentequation} and~\eqref{eq: k+beta alpha}, we derive an integral inequality for $\mm_k(\ff)$ expressed as
			\begin{equation}
				\mm_k(\ff(t)) \ge {\boldsymbol\eta}_k(t) := \mm_k(\ff^{\mbox{\rm{\mbox{in}}}}) + \frac{(1-k) \ell_2(k)}{k+\nu+1} \int_0^t \mm_k(\ff (\tau))^{(1-k- \ell_1(k))/(1-k)}\ d\tau \label{eq: MnNm}
			\end{equation}
			which holds for all $t \in [0,T_1)$. From~\eqref{eq: MnNm}, we infer that ${\boldsymbol\eta}_k$ satisfies the following differential inequality
			\begin{equation*}
				\frac{d{\boldsymbol\eta}_k}{dt}(t) \ge \frac{(1-k) {\ell_2(k)}}{k+\nu+1}  {\boldsymbol\eta}_k(t)^{ {(1-k- {\ell_1(k)})}/(1-k)}\,, \qquad t\in [0,T_1)\,.
			\end{equation*}
			By integrating the above differential inequality over the interval $[0,t]$, we derive that, for $t\in [0,T_1)$,
			\begin{equation*}
				{\boldsymbol\eta}_k(t)^{ {\ell_1(k)}/(1-k)} \le {\boldsymbol\eta}_k(0)^{ {\ell_1(k)}/(1-k)} + \frac{ {\ell_1(k)}  {\ell_2(k)}}{k+\nu+1} t,
			\end{equation*}
			 recalling that $ {\ell_1(k)}<0$. Using the above inequality and~\eqref{eq: MnNm}, we obtain
			\begin{equation*}
				\mm_k(\ff (t))^{ {\ell_1(k)}/(1-k)} \le {\boldsymbol\eta}_k(t)^{ {\ell_1(k)}/(1-k)} \le \mm_k(\ff^{\mbox{\rm{\mbox{in}}}})^{ {\ell_1(k)}/(1-k)} + \frac{ {\ell_1(k)}  {\ell_2(k)}}{k+\nu+1} t\,,
			\end{equation*}
			for $t\in [0,T_1)$. Since $\mm_k(\ff (t))^{ {\ell_1(k)}/(1-k)}\ge 0$, we have
			\begin{equation*}
				t \le \frac{k+\nu+1}{| {\ell_1(k)}|  {\ell_2(k)}} \mm_k(\ff^{\mbox{\rm{\mbox{in}}}})^{ {\ell_1(k)}/(1-k)}\,,
			\end{equation*}
			for $t\in [0,T_1)$. If we let $t\to T_1$ in the above inequality, then we conclude that
			\begin{equation}
				T_1 \le \frac{k+\nu+1}{| {\ell_1(k)}|  {\ell_2(k)}}  \mm_k(\ff^{\mbox{\rm{\mbox{in}}}})^{ {\ell_1(k)}/(1-k)}\,, \label{eq:ne4}
			\end{equation}
			 and this inequality is valid for any $k\in (|\nu|-1,1)$. Now, we note that
			\begin{equation*}
				\mm_k(\ff^{\mbox{\rm{\mbox{in}}}})^{k_0/(1+k_0-k)} \mm_{1+k_0}(\ff^{\mbox{\rm{\mbox{in}}}})^{(1-k)/(1+k_0-k)} \ge \mm_1(\ff^{\mbox{\rm{\mbox{in}}}}) = \rho,
			\end{equation*}
			so that
			\begin{equation*}
				\mm_k(\ff^{\mbox{\rm{\mbox{in}}}})^{ {\ell_1(k)}/(1-k)} \le \rho^{ {\ell_1(k)}(1+k_0-k)/k_0(1-k)}  \mm_{1+k_0}(\ff^{\mbox{\rm{\mbox{in}}}})^{| {\ell_1(k)}|/k_0}.
			\end{equation*}
			Consequently, since 
			\begin{align*}
				& \lim_{k\to -\nu-1}  {\ell_1(k)} = \lambda_1+ \nu+ 1 +(\lambda_2-\nu-2)_+<0\,, \\
				& \lim_{k\to -\nu-1}  { {\ell_2(k)}} = \rho^{(\lambda_1+\nu+1)/(\nu+2)}\min\{\rho^{\lambda_2/(\nu+2)}, \rho^{(\nu+2+k_0-{\lambda_2})/k_0} \mm_{1+k_0}(\ff^{\mbox{\rm{\mbox{in}}}})^{({\lambda_2}-\nu-2)/k_0}\} >0\,,
			\end{align*} 
			we may let $k\to -\nu-1$ in~\eqref{eq:ne4} and obtain
			\begin{equation*}
				T_1 \le \liminf_{k\to -\nu-1} \left\{ \frac{k+\nu+1}{| {\ell_1(k)}|  {\ell_2(k)}} \rho^{ {\ell_1(k)}(1+k_0-k)/k_0(1-k)}  \mm_{1+k_0}(\ff^{\mbox{\rm{\mbox{in}}}})^{| {\ell_1(k)}|/k_0} \right\} = 0,
			\end{equation*}
		thereby completing the proof of \Cref{thm:nonexistence}.
		\end{proof}
		

		
			\textbf{Acknowledgments.} 
			This research received financial support from the Indo-French Centre for Applied Mathematics (MA/IFCAM/19/58) as part of the project ``Collision-induced breakage and coagulation: dynamics and numerics." The authors would also like to acknowledge Council of Scientific \& Industrial Research (CSIR), India for providing a PhD fellowship to RGJ through Grant 09/143(0996)/2019-EMR-I. Part of this work was done while PhL enjoyed the hospitality of the Department of Mathematics, Indian Institute of Technology Roorkee.
		
			\bibliography{Refs.bib}
		\bibliographystyle{abbrv}

	\end{document}